\documentclass[10pt]{amsart}
\usepackage{amsmath}
\usepackage{amssymb}
\usepackage{amsthm}
\usepackage{amsfonts}
\usepackage{setspace}
\usepackage{cite}
\usepackage[usenames,dvipsnames,svgnames,table]{xcolor}
\usepackage[colorlinks=true, citecolor=blue, linkcolor=Crimson]{hyperref}

\usepackage{color, etoolbox, xspace}
\newtoggle{draftmode}
\newcommand{\highlight}[2][red]{{\color{#1}#2}}
\newcommand{\bsc}{\usefont{T1}{cmr}{bx}{sc}}

\newcommand{\note}[1][]{\iftoggle{draftmode}{{\noindent\highlight{\bsc\ifblank{#1}{[Achtung]}{[{\rm #1}]}}}\xspace}{}}

\newcommand{\DNC}{\mathrm{DNC}}

\newcommand{\pair}[1]{\langle #1 \rangle}

\renewcommand{\deg}[1]{\mathbf{#1}}

\newcommand{\estr}{\langle\rangle}

\newcommand{\bstrings}{\omega^{<\omega}}
\newcommand{\dset}[2]{\{#1 : #2 \}}
\newcommand{\tH}{\textrm{th}}
\newcommand{\ML}{Martin-L\"{o}f}

\DeclareMathOperator{\res}{\upharpoonright}
\DeclareMathOperator{\upto}{\upharpoonright}
\DeclareMathOperator{\converges}{\downarrow}
\DeclareMathOperator{\diverges}{\uparrow}

\theoremstyle{plain}
\newtheorem{theorem}{Theorem}[section]

\newtheorem{corollary}[theorem]{Corollary}
\newtheorem{lemma}[theorem]{Lemma}
\newtheorem{claim}[theorem]{Claim}

\theoremstyle{definition}
\newtheorem{definition}[theorem]{Definition}
\newtheorem{question}[theorem]{Question}

\togglefalse{draftmode}

\newcommand{\pP}{\mathbb{P}}
\newcommand{\floor}[1]{\lfloor #1 \rfloor}

\title{Forcing with Bushy Trees}


\author{Mushfeq Khan}
\address[Mushfeq Khan]{Department of Mathematics\\
University of Hawai`i at M{\=a}noa\\
Honolulu, HI 96822, USA}
\email{khan@math.hawaii.edu}
\author{Joseph S.~Miller}
\address[Joseph S.~Miller]{Department of Mathematics\\
University of Wisconsin\\
Madison, WI 53706-1388, USA}
\email{jmiller@math.wisc.edu}

\begin{document}

\begin{abstract} We present several results that rely on arguments involving the combinatorics of ``bushy trees''. These include the fact that there are arbitrarily slow-growing diagonally noncomputable ($\DNC$) functions that compute no Kurtz random real, as well as an extension of a result of Kumabe in which we establish that there are $\DNC$ functions relative to arbitrary oracles that are of minimal Turing degree. Along the way, we survey some of the existing instances of bushy tree arguments in the literature. \end{abstract}

\maketitle

\section{Introduction}

	In 1985, Sacks~\cite{Sacks} asked if there exist diagonally noncomputable (or $\DNC$) functions of minimal Turing degree. Kumabe answered the question in 1993, constructing such a function and pioneering the application of bushy tree arguments in computability theory. A draft of the proof ~\cite{Kumabe} was in private circulation by 1996, but has remained unpublished. \note[provide more detail on this draft]

	Arguments involving bushy trees and their combinatorics have since been applied to several questions concerning $\DNC$ functions. In 2000, Simpson and Giusto \cite{GiustoSimpson} asked if the reverse mathematics axiom system $\mathrm{DNC}$ is stronger than the system $\mathrm{WWKL_0}$. In 2004, Ambos-Spies, Kjos-Hanssen, Lempp, and Slaman \cite{DNRWWKL} used ideas from Kumabe's proof to provide an affirmative answer.

	In 2009, motivated by questions around Yates's long-standing open problem about whether every minimal degree has a strong minimal cover, Lewis collaborated with Kumabe to produce a simplified version \cite{KumabeLewis} of Kumabe's proof, the publication of which introduced the technique of ``bushy tree forcing'' to the wider community.

	A simpler variation on the technique appeared in Greenberg and Miller's 2011 result \cite{MillerGreenberg} that there are arbitrarily slow-growing $\DNC$ functions that compute no \ML\ random real. 

	More recently, Beros \cite{Beros} has applied arguments involving bushy trees to show that there exist $\DNC$ functions that compute no effectively bi-immune set, answering a question of Jockusch and Lewis \cite{JockuschLewis}. Dorais, Hirst, and Shafer \cite{DoraisHirstShafer}, building on the aforementioned work of Ambos-Spies, et al. \cite{DNRWWKL}\note[double-check], have shown that the reverse mathematics principle ``there exists a $k$ such that for every function $f$ there is a $k$-bounded function that is $\DNC$ relative $f$'' does not imply the existence of a $\{0, 1\}$-valued $\DNC$ function in the absence of $\Sigma^0_2$ induction, answering a question of Simpson. Bienvenu and Patey \cite{Fireworks}, by combining bushy tree arguments with probabilistic ones, have shown that there is a computable function $h$ such that every $2$-random real computes an $h$-bounded $\DNC$ function that computes no \ML\ random real.

	Of the new results we present here, there are two main ones. Theorem~\ref{thm:slow-no-Kurtz} is a variation on the Greenberg-Miller result mentioned above, stronger in one aspect, but (necessarily) weaker in another: There are arbitrarily slow-growing $\DNC$ functions that compute no Kurtz random real, although this fact cannot be partially relativized to yield a $\DNC$ function relative to an arbitrary oracle. It is a consequence of this theorem that there are sequences of effective Hausdorff dimension 1 that compute no Kurtz random real. Theorem~\ref{thm:minimal-X-DNC}, due to the first author, is a partial relativization of Kumabe's theorem. It asserts the existence of $\DNC$ functions relative to arbitrary oracles that are of minimal Turing degree.  

	One of the goals of the current paper is to study the diverse applications of bushy tree arguments in computability theory with a view to understanding what the similarities and differences between them are. In the case of forcing arguments, we are particularly interested in how properties of the partial order determine properties of the generic object (typically a $\DNC$ function). The definitions and combinatorial lemmas in Section~\ref{sec:defs-and-combinatorics} underly all of the arguments we present, and encapsulate some of the similarities. 

	The differences can be seen to occur primarily along three ``axes''. The first of these relates to the nature of the approximation to the generic object. In some arguments, the approximations are finite strings (what we term ``basic bushy forcing''), while others involve maintaining infinite trees. A second major difference is in the complexity of what we label the ``bad set''. These are sets of strings that are declared to be off limits in a construction. Some arguments (Theorem~\ref{thm:slow-no-Kurtz}, for example) require that the bad sets be computably enumerable, and these are not automatically amenable to partial relativization. In others, dropping the assumption of any form of effectivity on the bad set allows partial relativization (as in Theorem~\ref{thm:minimal-X-DNC}), but may require more complicated combinatorics, or a different assumption on the effectivity of the approximation. The third major difference is in whether the resulting $\DNC$ function can be constructed pointwise below a preimposed order function. This is possible, for example, in Theorem~\ref{thm:slow-no-Kurtz}, but the question of whether it is possible in Theorem~\ref{thm:minimal-X-DNC} is an important open one.

\section{Definitions and combinatorial lemmas}
\label{sec:defs-and-combinatorics}
Let $\varphi_0$, $\varphi_1$, $\varphi_2$, ... be an effective enumeration of the partial computable functions. The partial computable function $e \mapsto \varphi_e(e)$ is called the \emph{diagonal partial computable} function.

\begin{definition}
	A function $f \in \omega^\omega$ is \emph{diagonally noncomputable}, or \emph{$\DNC$}, if for all $e$ such that $\varphi_e(e)$ converges, $f(e) \neq \varphi_e(e)$.
\end{definition}

Of particular interest to us are the $\DNC$ functions that are bounded by some computable function $h \in \omega^\omega$.

\begin{definition}
	Let $h \in \omega^\omega$ be computable and such that for all $n \in \omega$, $h(n) \ge 2$. Then $\DNC_h$ denotes the class of $\DNC$ functions $f$ such that for all $n \in \omega$, $f(n) < h(n)$. The class of $\DNC$ functions in $k^\omega$, where $k \ge 2$, is denoted by $\DNC_k$.
\end{definition}

In several of the theorems, $h$ will in addition be nondecreasing and unbounded:

\begin{definition}
	An \emph{order function} is a computable, nondecreasing, and unbounded $h \in \omega^\omega$ such that for all $n \in \omega$, $h(n) \ge 2$.
\end{definition}

\begin{definition} \label{def:bushiness}
	Given $\sigma \in \bstrings$, we say that a tree $T \subseteq \bstrings$ is \emph{$n$-bushy above $\sigma$} if every element of $T$ is comparable with $\sigma$, and for every $\tau \in T$ that extends $\sigma$ and is not a leaf of $T$, $\tau$ has at least $n$ immediate extensions in $T$. We will refer to $\sigma$ as the \emph{stem} of $T$.
\end{definition}
Note that under this definition, the set of initial segments of $\sigma$ is actually $n$-bushy above $\sigma$.

Suppose $\sigma \in \bstrings$ can be extended to a $\DNC$ function. In other words, for all $e < |\sigma|$, the $e^\text{th}$ entry of $\sigma$ does not equal $\varphi_e(e)$ when it is defined. The basic motivation behind Definition~\ref{def:bushiness} is that any tree that is $2$-bushy above $\sigma$, by always containing at least two immediate extensions of any non-leaf string, allows one to avoid the values of the diagonal partial computable function, and therefore has a path in it that extends $\sigma$ and (if finite) can itself be extended to a $\DNC$ function. 

\begin{definition} \label{def:bigness}
	Given $\sigma \in \bstrings$, we say that a set $B \subseteq \bstrings$ is \emph{$n$-big above $\sigma$} if there is a finite $n$-bushy tree $T$ above $\sigma$ such that all its leaves are in $B$. If $B$ is not $n$-big above $\sigma$ then we say that $B$ is \emph{$n$-small} above $\sigma$.
\end{definition}

Let $B_\DNC \subseteq \bstrings$ denote the set of strings that cannot be extended to a $\DNC$ function. Using the terminology established in Definition~\ref{def:bigness}, the observation immediately preceding it can be rephrased as follows: $B_\DNC$ is $2$-small above any $\sigma \notin B_\DNC$.

We begin by establishing some of the basic combinatorial properties of bushy trees. The first is that we can extend the leaves of an $n$-bushy tree with $n$-bushy trees to obtain another $n$-bushy tree (the proof is immediate, hence omitted):

\begin{lemma}[Concatenation property]\label{lem:concatenation}
	Suppose that $A \subseteq \bstrings$ is $n$-big above $\sigma$. If $A_\tau \subseteq \bstrings$ is $n$-big above $\tau$ for every $\tau \in A$, then $\bigcup_{\tau \in T} A_\tau$ is $n$-big above $\sigma$.
\end{lemma}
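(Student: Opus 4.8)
The plan is to glue the witnessing bushy trees together at their leaves. Since $A$ is $n$-big above $\sigma$, fix a finite $n$-bushy tree $T$ above $\sigma$ all of whose leaves lie in $A$. For each leaf $\tau$ of $T$ we have $\tau \in A$, so $A_\tau$ is $n$-big above $\tau$; fix a finite $n$-bushy tree $T_\tau$ above $\tau$ all of whose leaves lie in $A_\tau$. Set
\[
T' \;=\; T \;\cup\; \bigcup\{\,T_\tau : \tau \text{ a leaf of } T\,\}.
\]
The claim will be that $T'$ witnesses that $\bigcup_{\tau \in A} A_\tau$ is $n$-big above $\sigma$ (where, as the hypothesis only defines $A_\tau$ for $\tau\in A$, I read the union in the statement as ranging over $A$; equivalently, over the leaves of $T$, which suffices).

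There are three things to verify. First, $T'$ is a finite subset of $\bstrings$ that contains $\sigma$, is closed under initial segments (an initial segment of an element of $T_\tau$ is either an initial segment of an element of $T_\tau$ extending $\tau$, hence in $T_\tau$, or an initial segment of $\tau$, hence in $T$), and all of whose elements are comparable with $\sigma$ (elements of $T$ by assumption; an element of $T_\tau$ is comparable with $\tau\supseteq\sigma$, hence with $\sigma$) — so $T'$ is a finite tree above $\sigma$. Second, I would check the bushiness condition by cases on a non-leaf $\rho\in T'$ extending $\sigma$: if $\rho$ is a non-leaf of $T$, attaching the $T_\tau$'s only extends leaves of $T$ and never removes successors of interior nodes, so $\rho$ keeps its $\ge n$ immediate $T$-successors; if $\rho$ is a leaf of $T$ but not of $T'$, then $\rho=\tau$ for some leaf $\tau$ with $T_\tau$ nondegenerate, so $\rho$ has $\ge n$ immediate successors in $T_\tau\subseteq T'$; and if $\rho$ lies in some $T_\tau$ and properly extends $\tau$, then $\rho$ is a non-leaf of the $n$-bushy tree $T_\tau$, so again it has $\ge n$ immediate successors. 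Third, the leaves of $T'$: each is either a leaf $\tau$ of $T$ with $T_\tau$ the degenerate chain to $\tau$, in which case $\tau \in A_\tau$, or a leaf of some $T_\tau$, which lies in $A_\tau$; in both cases it lies in $\bigcup_{\tau\in A} A_\tau$.

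The argument is entirely bookkeeping, and I do not expect a genuinely hard step. The only place requiring care is the ``seam'' at the old leaves of $T$: one must separately handle the degenerate case in which a witnessing tree $T_\tau$ is just the chain of initial segments of $\tau$ (so $\tau$ stays a leaf of $T'$ and one uses $\tau\in A_\tau$) versus the case in which $T_\tau$ branches above $\tau$ — and one should confirm that attaching the $T_\tau$'s does not accidentally disturb the branching already present at interior nodes of $T$, which it does not since the only nodes of $T_\tau$ lying weakly below $\tau$ are initial segments of $\tau$, all already in $T$ together with their next-bit successors toward $\tau$.
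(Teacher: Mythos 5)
Your proof is correct, and it is exactly the argument the paper has in mind: the lemma is stated there without proof, as the observation that one can extend the leaves of an $n$-bushy tree by $n$-bushy trees and obtain another $n$-bushy tree, which is precisely your gluing construction (including your sensible reading of the union as ranging over $\tau \in A$, the ``$\tau \in T$'' in the statement being a typo). Your careful handling of the seam at the old leaves (degenerate $T_\tau$ versus branching $T_\tau$, and the fact that nodes above distinct leaves cannot interfere) is the only nontrivial bookkeeping, and you have it right.
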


The second property that we use frequently is known as the \emph{smallness preservation property}. This is the \emph{second sparse subset property} of Kumabe and Lewis \cite{KumabeLewis}, and Lemma 5.4 of Greenberg and Miller \cite{MillerGreenberg}.
\begin{lemma}[Smallness preservation property]\label{lem:big-subset}
	Suppose that $B$ and $C$ are subsets of $\bstrings$, that $m, n \in \omega$ and that $\sigma \in \bstrings$. If $B$ and $C$ are respectively $m$-small and $n$-small above $\sigma$ then $B \cup C$ is $(n+m-1)$-small above $\sigma$.
\end{lemma}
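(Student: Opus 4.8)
The plan is to prove the contrapositive: assuming $B \cup C$ is $(n+m-1)$-big above $\sigma$, I will show that $B$ is $m$-big above $\sigma$ or $C$ is $n$-big above $\sigma$. Fix a finite $(n+m-1)$-bushy tree $T$ above $\sigma$ all of whose leaves lie in $B \cup C$. The heart of the argument is a claim proved by induction on the tree structure of $T$ (working upward from the leaves): for every $\tau \in T$ extending $\sigma$, either $B$ is $m$-big above $\tau$ or $C$ is $n$-big above $\tau$. Applying this at $\tau = \sigma$ finishes the proof.

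For the base case, suppose $\tau$ is a leaf of $T$. Then $\tau \in B \cup C$. If $\tau \in B$ then the trivial tree $\{\rho : \rho \preceq \tau\}$ witnesses that $B$ is $m$-big above $\tau$ (it is $m$-bushy above $\tau$ since $\tau$ is its only node extending $\tau$, and that node is a leaf), and symmetrically if $\tau \in C$. For the inductive step, suppose $\tau$ is not a leaf, so it has immediate extensions $\tau_1, \dots, \tau_k \in T$ with $k \ge n+m-1$. By the inductive hypothesis, each $\tau_i$ falls into $I_B := \{i : B \text{ is } m\text{-big above } \tau_i\}$ or into $I_C := \{i : C \text{ is } n\text{-big above } \tau_i\}$ (or both; put it in $I_B$ if so). Since $|I_B| + |I_C| \ge k \ge n+m-1$, a counting argument shows $|I_B| \ge m$ or $|I_C| \ge n$: otherwise $|I_B| \le m-1$ and $|I_C| \le n-1$ force $k \le n+m-2$, a contradiction.

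Suppose $|I_B| \ge m$ (the other case is symmetric). Then $\tau$ together with $\{\tau_i : i \in I_B\}$ forms an $m$-bushy tree above $\tau$, so the set $\{\tau_i : i \in I_B\}$ is $m$-big above $\tau$; and for each $i \in I_B$ there is a set $B_i \subseteq B$ that is $m$-big above $\tau_i$. By the concatenation property (Lemma~\ref{lem:concatenation}), $\bigcup_{i \in I_B} B_i$ is $m$-big above $\tau$, and since this union is contained in $B$, it follows that $B$ is $m$-big above $\tau$. This completes the induction, and hence the lemma. The only real subtlety is bookkeeping: one must track, at each node, a single $m$- or $n$-bushy witnessing tree and make sure the gluing in the inductive step stays within $B$ (resp. $C$); the combinatorial content is entirely the pigeonhole step $|I_B| \ge m$ or $|I_C| \ge n$, which is where the bound $n+m-1$ is exactly what is needed.
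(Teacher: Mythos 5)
Your proof is correct and is essentially the same argument as the paper's: the paper labels each node ``B'' or ``C'' from the leaves downward using exactly your pigeonhole count on the $n+m-1$ immediate extensions, which is your induction in different clothing. The only difference is that you make the propagation of bigness explicit via the concatenation property, whereas the paper leaves that bookkeeping implicit.
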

\begin{proof}
	Let $T$ be an $(m + n - 1)$-bushy tree above $\sigma$ with leaves in $B \cup C$. We show that either $B$ is $m$-big above or $C$ is $n$-big above $\sigma$. Label a leaf $\tau$ of $T$ ``B'' if it is in $B$, ``C'' otherwise. Now if $\rho$ is the immediate predecessor of $\tau$, then $\rho$ has at least $(m + n - 1)$ immediate extensions on $T$, each of which are labeled either ``B'' or ``C''. Then either $m$ of these are labeled ``B'', in which case we label $\rho$ ``B'', or $n$ are labeled ``C'', in which case we label $\rho$ ``C''. Continuing this process leads to $\sigma$ eventually getting a label. It is clear that if $\sigma$ is labeled ``B'' then $B$ is $m$-big above $\sigma$. Otherwise $C$ is $n$-big above $\sigma$.
\end{proof}

The third property is known as the \emph{small set closure property}:

\begin{lemma}[Small set closure property]\label{lem:small-set-closure}
	Suppose that $B \subset \bstrings$ is $k$-small above $\sigma$. Let $C = \dset{\tau \in \bstrings}{\text{$B$ is $k$-big above $\tau$}}$. Then $C$ is $k$-small above $\sigma$. Moreover $C$ is \emph{$k$-closed}, meaning that if $C$ is $k$-big above a string $\rho$, then $\rho \in C$.
\end{lemma}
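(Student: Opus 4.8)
The plan is to get both assertions as essentially immediate consequences of the concatenation property (Lemma~\ref{lem:concatenation}), with the ``moreover'' clause carrying the real content and the smallness of $C$ above $\sigma$ falling out as a corollary of $k$-closedness. So I would prove the $k$-closedness statement first.

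Concretely: suppose $C$ is $k$-big above some $\rho \in \bstrings$, and fix a finite $k$-bushy tree $T$ above $\rho$ all of whose leaves lie in $C$. Let $L$ be the set of leaves of $T$; then $L$ is $k$-big above $\rho$, witnessed by $T$ itself. For each $\tau \in L$ we have $\tau \in C$, which by the definition of $C$ means precisely that $B$ is $k$-big above $\tau$. Now I would apply Lemma~\ref{lem:concatenation} with $A = L$ and $A_\tau = B$ for every $\tau \in L$: its conclusion is that $\bigcup_{\tau \in L} A_\tau$ is $k$-big above $\rho$, and this union is just $B$. Hence $B$ is $k$-big above $\rho$, i.e.\ $\rho \in C$, which is exactly the assertion that $C$ is $k$-closed. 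For the first part of the statement, suppose toward a contradiction that $C$ is $k$-big above $\sigma$; by the $k$-closedness just established $\sigma \in C$, so $B$ is $k$-big above $\sigma$, contradicting the hypothesis that $B$ is $k$-small above $\sigma$. Therefore $C$ is $k$-small above $\sigma$. (Equivalently, one can run the same concatenation argument directly on a putative $k$-bushy tree above $\sigma$ witnessing that $C$ is $k$-big above $\sigma$, producing a $k$-bushy tree above $\sigma$ with leaves in $B$.)

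I do not expect a substantive obstacle here — the argument is short and purely combinatorial. The only points needing a moment's care are the routine checks in the appeal to Lemma~\ref{lem:concatenation}: that the set of leaves of a finite $k$-bushy tree above $\rho$ is indeed $k$-big above $\rho$ (immediate, including the degenerate case where the tree is just the set of initial segments of $\rho$), and that the indexed union in the concatenation lemma collapses to $B$ when each $A_\tau$ is taken to be $B$, so that no increase in the bushiness parameter occurs.
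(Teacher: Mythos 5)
Your argument is correct and matches the paper's proof: both establish $k$-closedness by applying the concatenation property to the leaves of a witnessing tree (each of which lies in $C$, hence has $B$ $k$-big above it), and then observe that the $k$-smallness of $C$ above $\sigma$ follows immediately from the hypothesis on $B$. The extra care you take with the routine checks is fine but not a point of divergence.
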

\begin{proof}
	Suppose that $C$ is $k$-big above a string $\rho$. Then, since $B$ is $k$-big above every $\tau \in C$, by the concatenation property, $B$ is $k$-big above $\rho$, so $\rho \in C$. The lemma follows immediately.
\end{proof}

The small set closure property is quite useful in the context of a forcing construction. Typically, $\sigma$ is an approximation to a function that we are building and $B$ is a set of strings that must be avoided in order to ensure that requirements remain met. We refer to it as the ``bad set''. Throughout the construction, we may wish to maintain the property that the bad set $B$ is $k$-small above $\sigma$ for some $k \in \omega$. Now, if $B$ is $k$-big above some string $\rho$, then $\rho$ is off-limits as well. Lemma \ref{lem:small-set-closure} allows us to assume that all such strings are already in the bad set, while preserving its smallness. From now on, whenever we deal with a bad set that is $k$-small, we also assume that it is $k$-closed. Note that the $k$-closure of a c.e.\ set of strings is also c.e.

\section{Basic bushy forcing}

As a first illustration of the convenience afforded us by these lemmas, we present a proof of a well-known result. Any bounded $\DNC$ function (i.e., a function in $\DNC_k$ for some $k \ge 2$) computes a function in $\DNC_2$. However, Jockusch showed in \cite{Jockusch} that this is not uniform.

\begin{theorem}[Jockusch \cite{Jockusch}]\label{thm:nonuniform}
	For each $n \ge 2$, there is no single functional $\Gamma$ such that for all $f \in \DNC_{n+1}$, $\Gamma^f \in \DNC_n$.
\end{theorem}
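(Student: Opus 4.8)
The plan is to run a bushy tree forcing construction to build the desired $f \in \DNC_{n+1}$, diagonalizing against every candidate functional as we go. Suppose toward a contradiction that $\Gamma$ is a functional such that $\Gamma^g \in \DNC_n$ for every $g \in \DNC_{n+1}$. The idea is to exploit the fact that the constraint on $f$ is $(n+1)$-branching while the output $\Gamma^f$ is only allowed to live in the sparser space $\DNC_n$: this asymmetry should let us force $\Gamma^f$ either to disagree with the $\DNC_n$ requirement at some argument, or to be partial, or to leave the prescribed bound, all while keeping the set of ``bad'' strings small enough that we can still complete $f$ to a total function in $\DNC_{n+1}$.

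First I would set up the forcing. Conditions are finite strings $\sigma \in \bstrings$ together with a bad set $B$ that is $k$-small above $\sigma$ for a fixed $k$ (one can take $k = 2$, or more carefully track the constant); by the small set closure property (Lemma~\ref{lem:small-set-closure}) we may assume $B$ is $k$-closed. The bad set will collect the strings we must avoid to keep $f$ out of the graph of the $e$-th partial computable function at each coordinate (this keeps $f \in \DNC_{n+1}$, and the relevant bad set here is $n$-small, hence $k$-small for appropriate bookkeeping). The key density argument is the diagonalization step: given a condition $(\sigma, B)$, I want to find $\tau \succeq \sigma$, still with $B$ small above it, and an argument $m$ such that for every total $g \in \DNC_{n+1}$ extending $\tau$ and avoiding $B$, the value $\Gamma^g(m)$ (if it converges) equals $\varphi_m(m)$ or exceeds $n-1$ — i.e. $g$ witnesses $\Gamma^g \notin \DNC_n$.

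The heart of the matter is a counting argument for this step. Consider the $(n+1)$-bushy tree of all extensions of $\sigma$ avoiding $B$. For a fixed candidate argument $m$, look at the set $B_m$ of strings $\tau$ above which $\Gamma^\tau(m)\converges$ with a value $v$ that would be ``good'' for $\DNC_n$ (i.e. $v \ne \varphi_m(m)$ and $v < n$). There are at most $n$ such good values. If for some $\tau$ the set of extensions forcing each individual good value is $2$-small above $\tau$, then by the smallness preservation property (Lemma~\ref{lem:big-subset}) the union over all $\le n$ good values is $(n+1)$-small above $\tau$ — strictly smaller than the $(n+1)$-bushiness available — so we can find an extension of $\tau$ avoiding all of them together with $B$, and we win at $m$. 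Otherwise, for every $\tau$ some single good value $v$ is $2$-big above $\tau$; but then, ranging over the finitely many arguments $m$ and using concatenation (Lemma~\ref{lem:concatenation}), one threads these $2$-big subtrees together to define a total computable $g \in \DNC_{n+1}$ on which $\Gamma^g$ is forced into $\DNC_n$ — and here is where the non-uniformity bites: the functional $\Gamma$ is fixed, so the choice of which good value is $2$-big is itself computable from $\Gamma$, meaning this $g$ is computable, hence cannot be in $\DNC_{n+1}$, a contradiction. (One must be a little careful: the real argument produces, for a fixed $\Gamma$, a computable $\DNC_{n+1}$ function, which is already absurd since no computable function is $\DNC$.)

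The main obstacle I expect is pinning down the bookkeeping between the bushiness constant of the generic tree, the smallness constant of the $\DNC_{n+1}$-preservation bad set, and the $n$ possible good $\DNC_n$-values, so that the inequality $2 + \cdots \le$ (bushiness) genuinely closes at the diagonalization step; one likely needs to start the construction with a sufficiently bushy stem, or to observe that $\DNC_{n+1}$-strings themselves form an $(n+1)$-bushy (in fact, for the bounded version, an $n$-big-above-everything) set and argue relative to that tree rather than all of $\bstrings$. A secondary subtlety is making sure the final $f$ is \emph{total}: this follows from the concatenation property together with the fact that, by construction, the bad set is always $k$-small above the current stem, so there is always a further extension, and a standard fusion/limit argument yields $f \in \DNC_{n+1}$ with $\Gamma^f \notin \DNC_n$, contradicting the assumed uniformity of $\Gamma$.
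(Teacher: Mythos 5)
There is a genuine gap, and it sits exactly at the two places your sketch leaves open. First, your Case B contradiction is not available: concatenating the $2$-big value-forcing sets only yields a $2$-bushy tree, and while such a tree contains $\DNC_{n+1}$ \emph{paths}, none of them is computable (you cannot computably dodge the values $\varphi_e(e)$), so no ``total computable $g \in \DNC_{n+1}$'' can be produced, and the parenthetical claim that ``the real argument produces a computable $\DNC_{n+1}$ function'' is the wrong target. The contradiction has to be against the hypothesis $\Gamma^f \in \DNC_n$ itself, and this requires a self-reference step that your proposal never sets up: define a partial computable $\varphi$ which, on input $m$, searches for a $2$-bushy tree all of whose leaves $\tau$ satisfy $\Gamma^\tau(m)\converges = i$ for a common $i < n$, and outputs that $i$; letting $e$ be an index of $\varphi$, the tree found at input $e$ has a leaf $\tau$ that is a $\DNC$ string (the non-$\DNC$ strings are $2$-small), hence extends to some $f \in \DNC_{n+1}$ with $\Gamma^f(e) = \varphi_e(e)$, so $\Gamma^f \notin \DNC_n$. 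Without choosing the diagonalization argument to be the index of this search procedure, there is no reason any particular $m$ is ``winnable.''

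Second, the bookkeeping problem you flag in Case A is not a technicality you can expect to close by choosing constants more carefully: in the $(n+1)$-branching space the $\DNC_{n+1}$ constraint is $2$-small and the union of the $\le n$ ``good-value'' sets is only $(n+1)$-small, so their union is merely $(n+2)$-small by Lemma~\ref{lem:big-subset}, which exceeds the $(n+1)$-bushiness available; you cannot in general extend while avoiding both, so the ``force divergence or a large value'' branch collapses. The paper's proof removes this case altogether rather than fighting the counting: since $\DNC_{n+1}$ is a compact $\Pi^0_1$ subclass of $(n+1)^\omega$, one replaces $\Gamma$ by a functional $\Xi$ that is total on $(n+1)^\omega$, takes values in $n^\omega$, and agrees with $\Gamma$ on $\DNC_{n+1}$. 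Then by compactness the full level at some height is $(n+1)$-big above $\estr$ and is covered by the $n$ sets $\Lambda_{i,m} = \{\sigma : \Xi^\sigma(m)=i\}$, so by smallness preservation some $\Lambda_{i,m}$ is $2$-big --- which is precisely what makes $\varphi$ total and the index trick go through. Finally, note that no forcing machinery or fusion argument is needed here at all: $\Gamma$ is fixed, so a single diagonalization of the above kind suffices; the iterated/forcing versions are only needed for the stronger theorems later in the paper.
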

\begin{proof}
	Let us assume that such a $\Gamma$ exists, i.e., for all $f \in \DNC_{n+1}$, $\Gamma^f \in \DNC_n$. The set of sequences in $\DNC_{n+1}$ is a $\Pi^0_1$ subset of $(n+1)^\omega$. It is well known that a functional that is total on a $\Pi^0_1$ subset of $k^\omega$ can be modified to obtain one that agrees with it on the $\Pi^0_1$ subset and which is total on $k^\omega$. Let $\Xi$ be so obtained from $\Gamma$. We may also assume that $\Xi^f \in n^\omega$ for all $f \in (n+1)^\omega$. 
	
	For each $m \in \omega$ and for each $i < n$, let $\Lambda_{i, m} = \dset{\sigma \in (n+1)^{<\omega}}{\Xi^\sigma(m) = i}$. By the compactness of $(n+1)^\omega$, there exists a finite level $k$ such that for every string $\tau \in (n+1)^k$, $\Xi^\tau(m)$ converges. Therefore, $\bigcup_{i < n}\Lambda_{i, m}$ is $(n+1)$-big above the empty string $\estr$. It is now easy to see, by repeatedly applying the smallness preservation property, that for some $i < n$, $\Lambda_{i, m}$ must be $2$-bushy above $\estr$.

	We specify a partial computable function $\varphi$. On input $m$, $\varphi$ searches for a $2$-bushy tree $T$ above $\estr$ such that for every leaf $\tau$ of $T$, $\Xi^\tau(m)$ converges to the same value $i$, which it then outputs. By the argument above, such a tree must exist, and so $\varphi(m)$ is defined for each $m$. Let $e$ be the index for $\varphi$, and let $T_e$ be the $2$-bushy tree that $\varphi$ finds on input $e$. 

	As we have observed, $B_\DNC$ is $2$-small above $\estr$, and so there is a leaf $\tau$ of $T_e$ that can be extended to an $f \in (n+1)^\omega$ that is $\DNC_{n+1}$. But then $\Xi^f(e) = \Xi^\tau(e) = \varphi_e(e)$, which is a contradiction.
\end{proof}

Finitely iterating this strategy yields the following stronger result:

\begin{theorem}
	For each $n \ge 2$, there is no finite set of functionals $\Gamma_0, \Gamma_1, ..., \Gamma_k$  such that for all $f \in \DNC_{n+1}$, there exists a $j \le k$ such that $\Gamma_j^f \in \DNC_n$.
\end{theorem}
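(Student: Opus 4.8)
The plan is to iterate the argument from Theorem~\ref{thm:nonuniform}, replacing the single functional $\Gamma$ with the finite family $\Gamma_0, \dots, \Gamma_k$ and using the smallness preservation property to handle the extra splitting of the bushy tree into ``good'' regions for each $\Gamma_j$. Suppose for contradiction that such functionals exist. As before, for each $j \le k$ the $\Pi^0_1$ class $\DNC_{n+1} \subseteq (n+1)^\omega$ lets us extend $\Gamma_j$ to a functional $\Xi_j$ that is total on all of $(n+1)^\omega$, takes values in $n^\omega$, and agrees with $\Gamma_j$ on every member of $\DNC_{n+1}$. The hypothesis then says: for every $f \in (n+1)^\omega$ that is $\DNC_{n+1}$, there is some $j \le k$ with $\Xi_j^f \in \DNC_n$.

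First I would fix $m$ and analyze, for each $j \le k$ and each $i < n$, the set $\Lambda^j_{i,m} = \dset{\sigma \in (n+1)^{<\omega}}{\Xi_j^\sigma(m){\converges} = i}$. By compactness each $\bigcup_{i<n} \Lambda^j_{i,m}$ is $(n+1)$-big above $\estr$. The key combinatorial point: I want to find, for each $m$, a single $2$-bushy tree $T$ above $\estr$ together with an assignment $j(\tau), i(\tau)$ to each leaf $\tau$ such that $\Xi_{j(\tau)}^\tau(m) = i(\tau)$, and moreover such that the leaves carrying any \emph{fixed} pair $(j,i)$ do \emph{not} by themselves form a $2$-big set above $\estr$ --- unless I can already pin down a winning response. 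More carefully: the union over all $(k+1) \cdot n$ pairs $(j,i)$ of the $\Lambda^j_{i,m}$ is $(n+1)$-big (even just using $j=0$), so by repeatedly applying the smallness preservation property (Lemma~\ref{lem:big-subset}): if every $\Lambda^j_{i,m}$ were $2$-small above $\estr$, their union would be $((k+1)n - \text{(number of sets)} \cdot 1 + \dots)$-small, i.e.\ $N$-small for $N = (k+1)n \cdot 2 - ((k+1)n - 1)\cdot 1 \cdot \dots$; the bookkeeping gives that a union of $t$ many $2$-small sets is $(t+1)$-small, so the union of the $(k+1)n$ sets is $((k+1)n+1)$-small above $\estr$. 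Since $(n+1)$-big is not $((k+1)n+1)$-small when $n+1 \ge (k+1)n+1$ --- which fails for $k \ge 1$! --- this naive counting is not enough, and that is the crux of the difficulty.

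The hard part will be exactly this: a single pass of smallness preservation loses too much bushiness, so I must instead localize. The right move is to work above a well-chosen string rather than above $\estr$. Concretely, I would argue: since $\bigcup_{j,i} \Lambda^j_{i,m}$ is $(n+1)$-big above $\estr$ but (by the above counting, run with the threshold $2$ only after first reducing $n+1$ down in stages) one shows there is a string $\rho$ extending $\estr$ and indices $j \le k$ such that, above $\rho$, the set $\bigcup_{i<n}\Lambda^j_{i,m}$ is $2$-big while no proper sub-collection kills the argument --- then, as in Theorem~\ref{thm:nonuniform}, repeatedly applying smallness preservation to the $n$ sets $\Lambda^j_{0,m}, \dots, \Lambda^j_{n-1,m}$ above $\rho$ (a union of $n$ sets, $2$-small would give $(n+1)$-small, contradicting $2$-big hence certainly $(n+1)$-big is wrong --- rather: $2$-big minus, we need the union of $n$ many $2$-small sets to be at most $(n+1)$-small, and since $2$-big does not entail $(n+1)$-small only when $n+1 > 2$, i.e.\ $n \ge 2$, this does work) shows some $\Lambda^j_{i,m}$ is $2$-big above $\rho$. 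So: for each $m$ there is a string $\rho_m$, an index $j_m \le k$, and a value $i_m < n$ with $\Lambda^{j_m}_{i_m, m}$ $2$-big above $\rho_m$.

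Finally I would run the diagonalization. Define a partial computable $\varphi$: on input $m$, search for a string $\rho$, an index $j \le k$, and a $2$-bushy tree $T$ above $\rho$ all of whose leaves $\tau$ satisfy $\Xi_j^\tau(m){\converges} = i$ for one common value $i$; output that $i$. By the previous paragraph the search succeeds, so $\varphi$ is total; let $e$ be an index for it. On input $e$ we obtain $\rho$, $j$, $T$ with all leaves forcing $\Xi_j(e) = \varphi_e(e)$. Since $T$ is $2$-bushy above $\rho$, I can extend some leaf $\tau$ of $T$ to an $f \in (n+1)^\omega$ that is $\DNC_{n+1}$ --- here I use, as in the earlier proof, that $\DNC_{n+1}$ paths are ``big'' enough to meet every $2$-bushy tree (the complement of $\DNC_{n+1}$, restricted to reasonable strings, is $1$-small, or more simply one uses the recursion theorem to see $2$-bushiness beats the single forbidden extension at each node). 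Then $\Gamma_j^f = \Xi_j^f$ on $e$, so $\Gamma_j^f(e) = \varphi_e(e)$, contradicting $\Gamma_j^f \in \DNC_n$. Since $f$ was $\DNC_{n+1}$ and $j$ ranged over all of $0, \dots, k$ via the construction, this contradicts the hypothesis, completing the proof. The only genuinely new ingredient over Theorem~\ref{thm:nonuniform} is the observation that passing to a string $\rho_m$ where one $\Gamma_j$ already behaves $2$-bushily lets the same $n$-fold smallness preservation argument go through, so that the number $k$ of functionals never enters the bushiness bookkeeping in a harmful way.
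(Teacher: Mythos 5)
There is a genuine gap at the final step. Your construction produces a single $f \in \DNC_{n+1}$, a single index $j$, and an $e$ with $\Gamma_j^f(e) = \varphi_e(e)$, i.e.\ it defeats \emph{one} functional on \emph{one} $f$. But the hypothesis being refuted is existential in $j$: it only claims that for each $f \in \DNC_{n+1}$ \emph{some} $\Gamma_{j'}$ yields a $\DNC_n$ function, and nothing you have shown rules out that a different $\Gamma_{j'}$, $j' \neq j$, succeeds on your $f$. The closing sentence (``$j$ ranged over all of $0,\dots,k$ via the construction'') is not justified: the search on input $e$ returns one fixed $j$. What is missing is exactly the iteration you announce but never carry out: after defeating some $\Gamma_{j_0}$ above a $\DNC_{n+1}$ string $\sigma_0$ (so that $\Gamma_{j_0}^g(e_0)=\varphi_{e_0}(e_0)$ for \emph{every} $g \succeq \sigma_0$), one must repeat the whole argument above $\sigma_0$ with the remaining functionals, obtaining $\sigma_0 \preceq \sigma_1 \preceq \dots \preceq \sigma_k$ so that after $k+1$ rounds every functional is defeated on every $\DNC_{n+1}$ extension of $\sigma_k$; only then does a $\DNC_{n+1}$ path through $\sigma_k$ contradict the hypothesis. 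This is precisely the paper's proof (there, a single auxiliary functional $\Xi$ that outputs whichever $\Gamma_j$ converges first is used in each round, and the defeated $j$ is removed from the list).

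Two further points. First, the middle paragraph's bookkeeping with all $(k+1)n$ sets $\Lambda^j_{i,m}$ is unnecessary and confused: for a \emph{fixed} $j$, totality of $\Xi_j$ already makes $\bigcup_{i<n}\Lambda^j_{i,m}$ an $(n+1)$-big set above $\estr$, and the smallness preservation argument of Theorem~\ref{thm:nonuniform} applied to these $n$ sets gives a $2$-big $\Lambda^j_{i,m}$; the number $k$ never needs to enter the bigness computation, so no ``localization above $\rho$'' is forced on you by counting. Second, letting $\varphi$ search over arbitrary strings $\rho$ is itself dangerous: the tree it finds may sit above a $\rho$ with no $\DNC_{n+1}$ extension, in which case its leaves cannot be extended to an $f \in \DNC_{n+1}$ and even the single diagonalization fails. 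The paper avoids this by searching above $\estr$ in the first round and, in later rounds, above the previously fixed $\DNC_{n+1}$ string $\sigma_i$ (hard-coded into the new partial computable function), so that $2$-bushiness always guarantees a $\DNC$ leaf.
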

\begin{proof}
	Let us assume that such a set of functionals exists. We define a new functional $\Xi$ as follows: on input $e$, $\Xi$ simulates $\Gamma_0$ through $\Gamma_k$ on input $e$ and outputs the result of whichever one converges first. We may again assume, without loss of generality, that $\Xi$ is total on $(n+1)^\omega$. We then proceed exactly as in the proof of Theorem~\ref{thm:nonuniform}, obtaining a string $\sigma_0$ that is $\DNC_{n+1}$ and an $e \in \omega$ such that $\Xi^{\sigma_0}(e) = \varphi_e(e)$. Then $\Xi^{\sigma_0}(e) = \Gamma_j^{\sigma_0}(e)$ for some $j \le k$. It follows that $\Gamma_j$ fails to compute a $\DNC_n$ function on any $f \in \DNC_{n+1}$ extending $\sigma_0$. We now repeat the same process above $\sigma_0$ with the reduced list of functionals $\{\Gamma_1, ..., \Gamma_k\} \setminus \{\Gamma_j\}$, obtaining a $\DNC_{n+1}$ string $\sigma_1$ extending $\sigma_0$ that diagonalizes against one of the remaining functionals. After $k+1$ iterations, we will have obtained a contradiction.
\end{proof}

The previous proof points the way towards more sophisticated constructions involving bushy trees where we satisfy countably many requirements. The next result is our first example of such a construction. It features a simpler variant of bushy tree forcing, which we term \emph{basic bushy forcing}. In this type of forcing, the approximation to the generic object is a finite string.  

\begin{theorem}[Ambos-Spies, Kjos-Hanssen, Lempp, and Slaman \cite{DNRWWKL}]\label{thm:AKLS1}
	There is a $\DNC$ function that computes no computably bounded $\DNC$ function.
\end{theorem}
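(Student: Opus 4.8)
The plan is to build $f$ as the union of a descending sequence of forcing conditions, via ``basic bushy forcing'', in which the approximation to $f$ is a finite string. A condition is a triple $(\sigma,k,B)$ with $\sigma\in\bstrings$, $k\ge 2$, and $B\subseteq\bstrings$ a c.e.\ set that is $k$-small and $k$-closed above $\sigma$; a condition $(\tau,k',B')$ extends $(\sigma,k,B)$ when $\tau\supseteq\sigma$, $k'\ge k$, $B'\supseteq B$, and no string $\rho$ with $\sigma\subseteq\rho\subseteq\tau$ lies in $B'$. We start from $(\estr,2,B_0)$, where $B_0$ is the $2$-closure of $D:=\{\rho^\frown v : \rho\in\bstrings,\ \varphi_{|\rho|}(|\rho|)\downarrow=v\}$. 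A short argument shows $D$ is $2$-small above $\estr$: each string has at most one immediate extension lying in $D$, so from any node of a would-be finite $2$-bushy tree into $D$ one could step to a child outside $D$, contradicting finiteness. Lemma~\ref{lem:small-set-closure} then gives that $B_0$ is $2$-small above $\estr$, $2$-closed, and (as $D$ is) c.e. Since every condition contains $B_0$ in its bad set and the extension relation forbids passing through the current bad set, every initial segment of the generic $f=\bigcup_i\sigma_i$ avoids $D$, so $f\in\DNC$; and since fewer than $k$ of the infinitely many immediate extensions of $\sigma$ can lie in the $k$-small set $B$, we may always lengthen $\sigma$, so $f$ is total.

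The requirements are, for each pair $(e,j)$: $R_{e,j}$, \emph{if $\varphi_j$ and $\Phi_e^f$ are total and $\Phi_e^f(n)\le\varphi_j(n)$ for all $n$, then $\Phi_e^f$ is not $\DNC$}. If every $R_{e,j}$ holds then $f$ computes no computably bounded $\DNC$ function, since such a function would be $\Phi_e^f=g$ for some $e$, with $g\le\varphi_j$ for some total computable $\varphi_j$, and its being $\DNC$ would contradict $R_{e,j}$. To force $R_{e,j}$ from a condition $(\sigma,k,B)$, I use the recursion theorem (choosing a fresh index each time) to obtain an index $n_0$ for the partial computable function that, on input $n_0$, waits for $b:=\varphi_j(n_0)$ to converge and then searches for a finite $(k+1)(b+1)$-bushy tree $T$ above $\sigma$ every leaf $\tau$ of which satisfies $\Phi_e^{\tau}(n_0)\downarrow\le b$; on finding such a $T$ it colours each leaf by the value of $\Phi_e(n_0)$ there (at most $b+1$ colours), propagates colours downward by giving a node the colour of at least $k+1$ of its children (possible by pigeonhole, as a node has at least $(k+1)(b+1)$ children), thereby extracts a $(k+1)$-bushy subtree $T_v$ of $T$ all of whose leaves give one value $v$, and outputs $v$.

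Now I split on the behaviour of $\varphi_{n_0}(n_0)$. If $\varphi_{n_0}(n_0)\downarrow=v$: since $B$ is $k$-small and $k$-closed above $\sigma$, at each node of $T_v$ outside $B$ at least one of its $\ge k+1$ children is again outside $B$ (else that node would be forced into $B$ by $k$-closure), so we descend within $T_v$ to a leaf $\tau$ with the whole path from $\sigma$ to $\tau$ avoiding $B$, and extend to $(\tau,k,B)$; then $\Phi_e^f(n_0)=\Phi_e^{\tau}(n_0)=v=\varphi_{n_0}(n_0)$ for every $f\supseteq\tau$, so $\Phi_e^f$ is not $\DNC$. If $\varphi_{n_0}(n_0)\uparrow$ because $\varphi_j(n_0)\uparrow$, then $\varphi_j$ is not total and $R_{e,j}$ holds vacuously. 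If $\varphi_{n_0}(n_0)\uparrow$ because the search never succeeds, then $G:=\{\tau\supseteq\sigma : \Phi_e^{\tau}(n_0)\downarrow\le b\}$ is $(k+1)(b+1)$-small above $\sigma$; by the smallness preservation property $B\cup G$ is small above $\sigma$, so, closing off via Lemma~\ref{lem:small-set-closure}, we extend to $(\sigma,k',B')$ with $B'\supseteq B\cup G$ and $k'$ large enough that $B'$ is $k'$-small and $k'$-closed above $\sigma$; thereafter $f$ avoids $G$, which forces $\Phi_e^f(n_0)\uparrow$ or $\Phi_e^f(n_0)>b=\varphi_j(n_0)$, so the hypothesis of $R_{e,j}$ fails. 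In each case an extension forcing $R_{e,j}$ is available, so a descending sequence of conditions that treats every $R_{e,j}$ and also lengthens $\sigma$ cofinally yields the desired $f$.

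The crux, and the main obstacle, is the design of this trichotomy. We can neither decide which case holds nor force $\Phi_e^f$ to be total or computably bounded, but because $R_{e,j}$ refers to one fixed candidate bound $\varphi_j$, a failed concentration search lets us diagonalize by pushing $\Phi_e^f$ past $\varphi_j$ (or into divergence) instead. Boundedness enters essentially here: the value $b=\varphi_j(n_0)$ both caps the relevant values of $\Phi_e(n_0)$ — making their number finite, which is what lets us fix the branching $(k+1)(b+1)$ of $T$ in advance and run the colour-propagation/pigeonhole argument, with the smallness preservation property (Lemma~\ref{lem:big-subset}) ensuring a $(k+1)$-bushy monochromatic subtree survives — and supplies the target $\Phi_e^f(n_0)>b$ to aim at when the concentration search fails; with no bound available, neither step would go through. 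One should also check that the function indexed by $n_0$ is genuinely computable — it uses only the fixed finite data $\sigma,k,e,j$ and, via the recursion theorem, $n_0$ — and in particular makes no reference to $B$: avoiding $B$ is deferred to the pruning step when the condition is extended, which is what keeps the construction compatible with the requirement that bad sets be c.e.
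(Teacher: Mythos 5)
Your proposal is correct and takes essentially the same route as the paper's proof: conditions are pairs consisting of a finite string and a small, closed bad set, and each requirement is met by the same dichotomy — either the set of oracles giving a convergent, bounded value is sufficiently big, in which case a pigeonhole/majority argument extracts a bushy monochromatic subtree and one extends the stem to force agreement with $\varphi_{n_0}(n_0)$, or it is small and is absorbed into the bad set using the smallness preservation and small set closure lemmas, forcing divergence or a value above the bound. The only cosmetic differences are that you index requirements by pairs $(e,j)$ and invoke the recursion theorem, whereas the paper quantifies over $h$-valued functionals for computable $h$ and defines its search function uniformly on all inputs so that no recursion theorem is needed.
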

\begin{proof}
	
The forcing conditions are pairs $(\sigma, B)$, where $\sigma \in \bstrings$, $B \subset \bstrings$ and:

\begin{itemize}
	\item for some $k \in \omega$, $B$ is $k$-small above $\sigma$ (and without loss of generality, $k$-closed)
	\item $B$ is \emph{upward closed} (i.e., if $\gamma$ is in $B$, then all extensions of $\gamma$ are in $B$).
\end{itemize}

The string $\sigma$ is an approximation to $f$ and the set $B$ is a ``bad set'', i.e., a set of strings that must be avoided in order to ensure that requirements remain satisfied. 

A condition $(\sigma, B)$ \emph{extends} another condition $(\tau, C)$ if $\tau \preceq \sigma$ and $C \subseteq B$. Let $\pP$ denote this partial order. Now if $\+G$ is a filter on $\pP$, then for any two elements $(\sigma, B)$ and $(\tau, C)$ of $\+G$, $\sigma$ and $\tau$ are comparable. Hence, $f_{\+G} = \bigcup \dset{\sigma}{(\sigma, B) \in \+G} \in \omega^{\le \omega}$. In fact, we can ensure that $f_\+G$ is total:

\begin{claim}\label{lem:AKLS1-totality}
	If $\+G$ is sufficiently generic with respect to $\pP$, then $f_\+G$ is total.
\end{claim}
\begin{proof} 
	We show that the collection $\+T_m = \dset{(\sigma, B) \in \pP}{|\sigma| \ge m}$ is dense in $\pP$. Suppose $(\sigma, B) \in \pP$, where $|\sigma| < m$. Then $B$ is $k$-small above $\sigma$ for some $k \in \omega$. The set $C = \dset{\tau \in \bstrings}{|\tau| \ge m}$ is $k$-big above $\sigma$, so let $\tau$ be any string in $C \setminus B$. Then $(\tau, B) \in \pP$.
\end{proof}

\begin{claim}\label{lem:AKLS1-bad-set-avoidance}
	If $\+G$ is any filter on $\pP$, then for all $(\sigma, B) \in \+G$, $f_\+G$ has no initial segment in $B$.
\end{claim}
\begin{proof}
	Suppose that $f_\+G$ has an initial segment $\tau$ in $B$. Then there is a $(\rho', C') \in \+G$ such that $\rho'$ extends $\tau$. Let $(\rho, C)$ be a common extension of $(\rho', C')$ and $(\sigma, B)$. Since $B$ is upward closed, $\rho \in B$. But $B \subseteq C$, so $\rho \in C$. This is a contradiction, since it follows that $C$ is $k$-big above $\rho$ for all $k \in \omega$.
\end{proof}

If $\Gamma$ is a functional and $h$ a computable function such that $\Gamma$ is $h$-valued (in other words, whenever $\Gamma$ converges with any oracle on input $e$, its output is less than $h(e)$), let $\+D_{\Gamma, h}$ denote the set of $(\sigma, B) \in \pP$ such that for all $g \in [\sigma] \setminus [B]^\prec$, $\Gamma^g$ is not a $\DNC_{h}$ function. 

\begin{claim}\label{lem:AKLS1-density}
	For each computable function $h$, and $h$-valued functional $\Gamma$, $\+D_{\Gamma, h}$ is dense in $\pP$.
\end{claim}
\begin{proof}
	Suppose $(\sigma, B) \in \pP$ and that $B$ is $k$-small above $\sigma$. As in the proof of Theorem~\ref{thm:nonuniform}, we specify a partial computable function $\varphi$. On input $m$, $\varphi$ searches for a $k$-bushy tree $T$ above $\sigma$ such that for every leaf $\tau$ of $T$, $\Gamma^\tau(m)$ converges to the same value $i < h(m)$. Upon finding such a tree, $\varphi$ outputs $i$. Let $e$ be the index of $\varphi$.

	There are now two cases. If the set $A = \dset{\tau}{\Gamma^\tau(e)\downarrow}$ is $(h(e)\cdot k)$-small above $\sigma$, then $A \cup B$ is $(h(e)\cdot k + k-1)$-small above $\sigma$. Then $(\sigma, A \cup B) \in \pP$ and extends $(\sigma, B)$. Note that we have forced $\Gamma$ to be partial on any $g \in [\sigma] \setminus [A \cup B]^\prec$. Hence, $(\sigma, A \cup B) \in \+D_{\Gamma, h}$.
	
	On the other hand, if $A$ is $(h(e)\cdot k)$-big above $\sigma$, then for some $i < h(e)$, $\dset{\tau}{\Gamma^\tau(e)\downarrow = i}$ is $k$-big above $\sigma$. So $\varphi(e)$ is defined. In this case, we extend $\sigma$ to any $\tau$ not in $B$ such that $\Gamma^\tau(e)\downarrow = \varphi(e)$. This forces  $\Gamma^g$ to fail to be $\DNC$ on any $g$ extending $\tau$. Hence, $(\tau, B) \in \+D_{\Gamma, h}$.
\end{proof}
	
	Finally, $B_\DNC$, the set of finite strings that cannot be extended to a $\DNC$ function, is $2$-small above $\estr$, so $(\estr, B_\DNC) \in \pP$. Let $\+G$ be a filter on $\pP$ containing $(\estr, B_\DNC)$ that meets $\+T_m$ for every $m \in \omega$ and $\+D_{\Gamma, h}$ for every computable function $h$ and $h$-valued functional $\Gamma$ (note that this is a countable collection of dense sets). 
	
	By Claim~\ref{lem:AKLS1-totality}, $f_\+G$ is total. By Claim~\ref{lem:AKLS1-bad-set-avoidance} and the fact that $(\estr, B_\DNC) \in \+G$, $f_\+G$ is a $\DNC$ function. If $f_\+G$ computes a function in $\DNC_h$ for some computable function $h$, then it does so via an $h$-valued functional $\Gamma$. Claim~\ref{lem:AKLS1-density} shows that this is not the case. This concludes the proof of Theorem~\ref{thm:AKLS1}.\end{proof}
	
 	We note that while the bad sets in the previous proof are c.e., we do not make use of this fact. Given an oracle $X$, let $B^X_\DNC$ denote the set of finite strings that are not $\DNC$ \emph{relative to $X$}. Note that $B^X_\DNC$ is not necessarily c.e., but is nevertheless $2$-small above $\estr$. This suggests that we could use the same sort of techniques to construct a function that is $\DNC$ relative to $X$. As an example, we prove a theorem that implies the main result in \cite{DNRWWKL}, and is slightly stronger.

	\begin{theorem}\label{thm:computes-no-DNC-h}
		Fix a computable function $h$. Suppose $X$ computes no $\DNC_h$ function. Then there is an $f$ that is $\DNC$ relative to $X$ such that $f \oplus X$ computes no $\DNC_h$ function.
	\end{theorem}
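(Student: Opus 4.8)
The plan is to run the basic bushy forcing of Theorem~\ref{thm:AKLS1} essentially verbatim, but with the oracle $X$ appended to every functional; this is legitimate because, as noted just above, the bad sets in that argument are never required to be c.e. So the conditions are again pairs $(\sigma, B)$ with $\sigma \in \bstrings$ and $B \subseteq \bstrings$ upward closed, $k$-closed, and $k$-small above $\sigma$ for some $k \in \omega$, ordered by extension of strings together with inclusion of bad sets; call this order $\pP$. Since the set $B^X_\DNC$ of strings that are not $\DNC$ relative to $X$ is upward closed and $2$-small above $\estr$, the pair $(\estr, B^X_\DNC)$ is a condition. The proofs of Claims~\ref{lem:AKLS1-totality} and~\ref{lem:AKLS1-bad-set-avoidance} use nothing about the effectivity of the bad set, so for a filter $\+G$ through $(\estr, B^X_\DNC)$ meeting sufficiently many dense sets the function $f_\+G$ is total and has no initial segment in $B^X_\DNC$; that is, $f_\+G$ is $\DNC$ relative to $X$. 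It remains to force, for each $h$-valued functional $\Gamma$, that $\Gamma^{g \oplus X}$ is not $\DNC_h$ for the generic $g = f_\+G$; since any $\DNC_h$ function computed by $f_\+G \oplus X$ is computed by such a $\Gamma$, meeting the corresponding countably many dense sets finishes the argument.

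So fix an $h$-valued functional $\Gamma$, let $\+D_\Gamma$ be the set of conditions $(\sigma, B)$ such that $\Gamma^{g \oplus X}$ is not $\DNC_h$ for every $g \in [\sigma] \setminus [B]^\prec$, and let me show $\+D_\Gamma$ is dense. Given $(\sigma, B) \in \pP$ with $B$ being $k$-small above $\sigma$, define a partial $X$-computable function $\psi$ by setting $\psi(m)$ to be the least $i < h(m)$ such that $\dset{\tau \succeq \sigma}{\Gamma^{\tau \oplus X}(m) \downarrow = i}$ is $k$-big above $\sigma$ --- a $\Sigma^0_1(X)$ condition, witnessed by a finite $k$-bushy tree. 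If $\psi(m) \uparrow$ for some $m$, then each of the $h(m)$ sets $\dset{\tau \succeq \sigma}{\Gamma^{\tau \oplus X}(m) \downarrow = i}$, $i < h(m)$, is $k$-small above $\sigma$, so by iterating the smallness preservation property (Lemma~\ref{lem:big-subset}) their union $A_m = \dset{\tau \succeq \sigma}{\Gamma^{\tau \oplus X}(m) \downarrow}$ is small above $\sigma$, whence $A_m \cup B$ (or, if needed, its upward and $k'$-closure, which remains small by Lemma~\ref{lem:small-set-closure}) is small above $\sigma$; the resulting condition extends $(\sigma, B)$, forces $\Gamma^{g \oplus X}(m)$ to diverge, and so lies in $\+D_\Gamma$.

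If instead $\psi$ is total, then $\psi$ is an $X$-computable function with $\psi(m) < h(m)$ for every $m$. This is the only place the hypothesis is used: since $X$ computes no $\DNC_h$ function, $\psi$ is not $\DNC_h$, and because $\psi$ is already bounded by $h$ this must be because $\psi(j) = \varphi_j(j)$ for some $j$ with $\varphi_j(j) \downarrow$. By the definition of $\psi$, the set $\dset{\tau \succeq \sigma}{\Gamma^{\tau \oplus X}(j) \downarrow = \psi(j)}$ is $k$-big above $\sigma$; fix a finite $k$-bushy tree above $\sigma$ witnessing this. As $B$ is $k$-small and $k$-closed above $\sigma$, this tree has a leaf $\tau$ with $\tau \notin B$ and $B$ still $k$-small above $\tau$, so $(\tau, B) \in \pP$ extends $(\sigma, B)$. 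For every $g \in [\tau] \setminus [B]^\prec$ we get $\Gamma^{g \oplus X}(j) = \Gamma^{\tau \oplus X}(j) = \varphi_j(j)$ by the use principle, so $\Gamma^{g \oplus X}$ is not $\DNC_h$; hence $(\tau, B) \in \+D_\Gamma$.

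The main obstacle is precisely the totality case, and the role of the hypothesis there is worth emphasizing. In the unrelativized Theorem~\ref{thm:AKLS1} the analogue of $\psi$ is plain computable, so the recursion theorem furnishes a self-referential input $e$ at which $\Gamma$ can be forced to copy $\varphi_e(e)$. Once $\Gamma$ consults $X$, the ``bushy-majority'' function $\psi$ is only $X$-computable, and no such fixed point is available; instead, the assumption that $X$ computes no $\DNC_h$ function guarantees that this $h$-bounded $X$-computable $\psi$ must already collide with some $\varphi_j(j)$, and that $j$ takes over the role of the recursion-theoretic fixed point. The remaining points --- that closures preserve the relevant smallness (Lemma~\ref{lem:small-set-closure}) and the upward-closedness of bad sets, and that Claims~\ref{lem:AKLS1-totality} and~\ref{lem:AKLS1-bad-set-avoidance} carry over without change --- are routine.
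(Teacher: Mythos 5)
Your argument is correct and is essentially the paper's own proof: the paper runs the same case analysis (force divergence if some convergence set is small; otherwise force agreement with some $\varphi_x(x)$; and shows that if neither is possible then $X$ computes a $\DNC_h$ function via exactly your bushy-majority function $\psi$, contradicting the hypothesis), so your version merely repackages that contradiction as a direct application of the hypothesis in the totality case. The only nit is that $\psi(m)$ should be defined by dovetailing the searches and outputting the first value $i < h(m)$ found with a witnessing $k$-bushy tree, rather than the \emph{least} such $i$, since the least $i$ satisfying a $\Sigma^0_1(X)$ condition need not be partial $X$-computable; this change affects nothing else in your argument.
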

	
	\begin{proof}
		The forcing partial order is the same as before. If $\Gamma$ is an $h$-valued functional, let $\+D_{\Gamma}$ denote the set of $(\sigma, B) \in \pP$ such that for all $f \in [\sigma] \setminus [B]^\prec$, $\Gamma^{f \oplus X}$ is not a $\DNC_{h}$ function. We show that $\+D_{\Gamma}$ is dense in the partial order. Suppose $(\sigma, B)$ is a condition where $B$ is $k$-small above $\sigma$.
		
		First, if there are $x, l \in \omega$ such that \[C_x = \dset{\tau \in \bstrings}{\Gamma^{\tau \oplus X}(x)\converges}\] is $l$-small above $\sigma$, then the condition $(\sigma, B \cup C_x)$ extends $(\sigma, B)$ and forces the divergence of $\Gamma^{f_\+G \oplus X}(x)$. Therefore, let us assume that for each $x, l \in \omega$, $C_x$ is $l$-big above $\sigma$.
		
		Next, if there exists an $x \in \omega$ such that $\varphi_x(x)$ converges and \[N_x = \dset{\tau \in \bstrings}{\Gamma^{\tau \oplus X}(x)\converges = \varphi_x(x)}\] is $k$-big above $\sigma$, then there is a $\tau$ extending $\sigma$ not in $B$ such that $\Gamma^{\tau \oplus X}(x) \converges = \varphi_x(x)$, and so the condition $(\tau, B)$ extends $(\sigma, B)$ and forces that $f_\+G$ is not $\DNC$. Therefore, let us assume that for each $x \in \omega$, either $\varphi_x(x)$ diverges or $N_x$ is $k$-small above $\sigma$.
		
		We now describe how to compute a $\DNC_h$ function from $X$, which yields a contradiction. On input $x$, search for a $k$-bushy tree $T$ above $\sigma$ such that for every leaf $\tau$ of $T$, $\Gamma^{\tau \oplus X}(x)$ converges to the same value $j < h(x)$, then output $j$. Since for each $x$, $C_x$ is $(h(x)\cdot k)$-big above $\sigma$, such a tree $T$ exists. So the $X$-computable function just described is total. Moreover, it disagrees with $\varphi_x(x)$ whenever it is defined, since $N_x$ is $k$-small above $\sigma$.
		
		Therefore, $\+D_{\Gamma}$ is dense. Let $\+G$ be a generic filter including the condition $(\estr, B^X_{\DNC})$. Then $f_\+G$ has the required properties.
		\end{proof}

	With a stronger assumption, the technique in the proof of Theorem~\ref{thm:computes-no-DNC-h} yields a stronger conclusion: If $X$ computes no computably bounded $\DNC$ function, then there is an $f$ that is $\DNC$ relative to $X$ such that $f \oplus X$ computes no computably bounded $\DNC$ function. We omit the proof.
	
	An analysis of the amount of bushiness we require above $\sigma$ in the diagonalization argument of Claim~\ref{lem:AKLS1-density} yields the following:
	
\begin{theorem}[Ambos-Spies, et al. \cite{DNRWWKL}]\label{thm:AKLS2}
	For each order function $h$ there is an order function $j$ and a function $f \in \DNC_j$ that computes no function in $\DNC_h$.
\end{theorem}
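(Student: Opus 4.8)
The plan is to re-run the forcing of Theorem~\ref{thm:AKLS1} — and in particular the density argument of Claim~\ref{lem:AKLS1-density} — inside the tree $\prod_{n}j(n)$ of $j$-bounded strings, for a computable order function $j$ chosen large enough to accommodate all the bushy trees the construction calls for. First I would reduce to $h$-valued functionals: if $\Gamma^{f}$ is a $\DNC_h$ function then $\Gamma$ agrees on $f$ with its truncation to an $h$-valued functional, so it suffices to diagonalize against a list $\Gamma_0,\Gamma_1,\dots$ of $h$-valued functionals. The conditions are pairs $(\sigma,B)$ with $\sigma$ a $j$-bounded string, $B$ upward closed and, for some $k$, $k$-small above $\sigma$ \emph{inside $\prod j$} (no $k$-bushy tree above $\sigma$ using only values $<j(n)$ at level $n$ has all its leaves in $B$); I would check that Lemmas~\ref{lem:concatenation}, \ref{lem:big-subset}, and~\ref{lem:small-set-closure} go through verbatim in this relativized sense, since their proofs only ever pass to subtrees of a given tree. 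We start from $(\estr,B_\DNC)$, which is $2$-small inside $\prod j$ as soon as $j(n)\ge 2$.

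Next I would re-examine Claim~\ref{lem:AKLS1-density} and bookkeep the bushiness. To handle the requirement for $\Gamma_e$ from a condition whose bad set is $k$-small above the stem $\sigma$, define the auxiliary partial computable $\varphi$ that on input $m$ searches for a $k$-bushy tree inside $\prod j$ above $\sigma$ all of whose leaves $\tau$ give $\Gamma_e^\tau(m)\converges\, =\, i < h(m)$, and outputs $i$; let $e'$ index it. If $\{\tau : \Gamma_e^\tau(e')\converges\}$ is $(k\cdot h(e'))$-small above $\sigma$ inside $\prod j$, adjoin it to the bad set: this forces $\Gamma_e^{f}(e')$ to diverge and, by smallness preservation, leaves the bad set $(k\cdot h(e')+k-1)$-small. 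Otherwise that set is $(k\cdot h(e'))$-big above $\sigma$ inside $\prod j$, so by iterating smallness preservation some value class is $k$-big inside $\prod j$, whence $\varphi(e')\converges$, and we extend $\sigma$ inside $\prod j$ to a leaf of $\varphi$'s consensus tree that avoids the bad set, forcing $\Gamma_e^{f}(e')=\varphi_{e'}(e')$. Interleaving a totality requirement — extend the stem by one level, which by (the relativization of) Claim~\ref{lem:AKLS1-totality} together with $k$-closure can always be done using a value below the current bushiness parameter — makes $f$ total and guarantees the stem has length at least $s-1$ after stage $s$. Hence the bushiness parameter after stage $s$ is at most $2\prod_{t<s}(h(e'_t)+1)$, where $e'_t$ indexes the auxiliary functional used at stage $t$, and $f(n)$ is fixed at some stage $\le n+1$.

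Finally I would take $j$ to be a computable order function with $j(m)\ge 2\prod_{t\le m+2}(h(b(t))+1)$, where $b$ is a \emph{computable} function with $b(t)\ge e'_t$; then at the stage where $f(n)$ is fixed every bushiness scale in play is below $j(n)$, so by monotonicity every bushy tree the construction asks for lies inside $\prod j$, the density and totality arguments run unchanged, and a sufficiently generic $f$ lies in $\DNC_j$ and computes no $\DNC_h$ function. The genuinely delicate point — precisely the ``analysis of the amount of bushiness'' alluded to before the statement — is producing the computable bound $b$: the stage-$t$ auxiliary functional must know the current stem, which is not computable from $t$, so I would use the recursion theorem to give that functional a pre-assigned index and have it recover the stem by simulating the construction, which in turn forces the construction to be made effective (replacing the ``$k$-big versus $k$-small'' decisions by bounded searches with computably growing depth and time bounds, and revisiting each requirement finitely often). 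I expect the main obstacle to be exactly this effectivization: arranging that bounding the searches does not cause any requirement to be revisited infinitely often — which would push the bushiness past every computable bound — while keeping the per-requirement blow-up factor computably controlled. Everything else is a routine transcription of the proof of Theorem~\ref{thm:AKLS1} into the $j$-bounded setting.
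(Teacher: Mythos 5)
Your outline of the forcing itself---restricting to $j$-bounded strings and re-running the two-case density argument of Claim~\ref{lem:AKLS1-density} with the auxiliary search function---does match the paper's proof of Theorem~\ref{thm:AKLS2}, but the step you yourself flag as the ``genuinely delicate point'' is a real gap, and the repair you sketch heads in the wrong direction. Your plan needs a \emph{computable} bound $b(t)\ge e'_t$ on the indices of the auxiliary functionals, yet those indices depend on the actual stems produced by the (non-effective) generic construction; you propose to fix this by giving the stage-$t$ functional a pre-assigned index that recovers the stem by simulating the construction, which forces you to effectivize the whole forcing with bounded searches and finitely-often-revisited requirements---and you admit you do not know how to make that work. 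The paper avoids this entirely: it never asks what the stem at stage $t$ actually is. Instead it defines a computable function $q(\sigma,i)$ giving, for \emph{every} string $\sigma\in j^{<\omega}$ and every $i$, an index for the search (inside $j^{<\omega}$) for a $|\sigma|$-bushy tree above $\sigma$ forcing a common value of $\Gamma_i$ below $h$; since $j^n$ is a finite set, $\bar q(n)=\max_{i<n,\ \sigma\in j^n}q(\sigma,i)$ is computable, and $j(n)$ is simply defined to exceed $\max_{i<n}j(i)$ and $(h(\bar q(n))+1)\cdot n+2$. The recursion theorem enters only to break the circularity that $q$ refers to the index of $j$ while $j$ is being defined from $q$---not to simulate the construction, which remains non-effective (only $j$ must be computable, not $f$).

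A second difference disposes of your product bookkeeping. The paper ties the smallness parameter to the length of the stem: a condition $(\sigma,B)$ requires $B$ to be $|\sigma|$-small above $\sigma$. In the divergence case of the density argument one passes to a new stem $\tau$ of length $c=(h(q(\sigma,i))+1)\cdot|\sigma|-1$ and to the $c$-closure of $A\cup B$, which is $c$-small by the smallness preservation property; since $j(|\sigma|)>c$ and $j$ is nondecreasing, the full levels of $j^{<\omega}$ are $c$-big above $\sigma$, so such a $\tau$ exists and the invariant ``$|\tau|$-small above $\tau$'' is restored automatically. With this normalization there is no accumulated factor $2\prod_{t<s}(h(e'_t)+1)$ to control, no function $b$, and no danger of requirements being revisited. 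As written, your proposal leaves the existence of a computable $j$---the whole content of the theorem beyond Theorem~\ref{thm:AKLS1}---unestablished; replacing the simulation/effectivization idea by the ``maximize $q(\sigma,i)$ over all potential stems $\sigma\in j^n$'' definition of $j$ is the missing ingredient.
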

\begin{proof}
If $j$ is an order function, let $j^n$ denote the space
	\[ \prod_{m < n} \{0, 1, ..., j(m) - 1\},\]
and let $j^{<\omega}$ and $j^\omega$ be defined in the obvious way.
	
We now fix a computable function $h$ and let $(\Gamma_i)_{i \in \omega}$ be an effective enumeration of all $h$-valued Turing functionals. We define an order function $j$ by recursion. In order to define $j$, we will also define an auxiliary computable function $q:\bstrings \times \omega^2$, the definition of which will refer to the index of the function $j$. This is possible because we can assume, by the recursion theorem, that we have access to the index of $j$ in advance.  

On input $x$, $\varphi_{q(\sigma, i)}$ searches for a $|\sigma|$-bushy tree $T$ above $\sigma$ contained in $j^{<\omega}$ such that for every leaf $\tau$ of $T$, $\Gamma_i^\tau(x)$ converges to the same value $k < h(x)$, and upon finding such a tree, itself outputs $k$. Now let $\bar{q} = \max_{i < n, \sigma \in j^n} q(\sigma, i)$. We define $j(n)$ to be the larger of $\max_{i < n} j(i)$ and $((h(\bar{q}(n)) + 1) \cdot n) + 2$.

The forcing conditions are now pairs $(\sigma, B)$ where $B \subseteq j^{<\omega}$ and $\sigma \in j^{<\omega} \setminus B$. We require that $B$ be upward-closed and $|\sigma|$-small above $\sigma$. By the small set closure property, we may assume that $B$ is $|\sigma|$-closed. For $\sigma \in j^{<\omega}$, let $[\sigma]_j$ denote $\dset{X \in j^\omega}{\sigma \prec X}$.

\begin{claim}\label{lem:AKLS2-density}
Let $\+D_i$ denote the set of $(\sigma, B) \in \pP$ such that for all $g \in [\sigma]_j \setminus [B]^\prec$, $\Gamma_i^g$ is not a $\DNC_{h}$ function. Then for each $i \in \omega$, $\+D_i$ is dense in $\pP$.
\end{claim}
\begin{proof}
	Suppose that $(\sigma, B) \in \pP$. By suitably extending $\sigma$, we can assume that $|\sigma| > i$. Let $n = |\sigma|$ and \[ A = \dset{\tau \in j^{<\omega}}{\Gamma_i^\tau(q(\sigma, i))\downarrow}. \]
	As in the proof of Claim~\ref{lem:AKLS1-density}, there are two cases.
	
	If $A$ is $(h(q(\sigma, i)) \cdot n)$-small above $\sigma$, then letting $c = (h(q(\sigma, i)) \cdot n + n - 1)$, $A \cup B$ is $c$-small above $\sigma$. Let $C$ be the $c$-closure of $A \cup B$. Since $j(n) \ge (h(q(\sigma, i)) + 1) \cdot n > c$ and $j$ is nondecreasing, $j^c$ is $c$-big above $\sigma$. Let $\tau$ be any string extending $\sigma$ in $j^c \setminus C$. Then $(\tau, C)$ is a condition. Further, $\Gamma_i^f$ is partial on any $f \in [\tau]_j \setminus [C]^\prec$, so $(\tau, C) \in \+D_i$.
	
	On the other hand, if $A$ is $(h(q(\sigma, i)) \cdot n)$-big above $\sigma$, then for some $k < h(q(\sigma, i))$, the set $\dset{\tau \in j^{<\omega}}{\Gamma_i^\tau(q(\sigma, i)) \downarrow = k}$ is $n$-big above $\sigma$. It follows that $\varphi_{q(\sigma, i)}(q(\sigma, i))$ is defined. So there is a $\tau \in j^{<\omega} \setminus B$ extending $\sigma$ such that $\Gamma_i^\tau(q(\sigma, i))) = \varphi_{q(\sigma, i)}(q(\sigma, i))$. Then $(\tau, B) \in \pP \cap \+D_i$.
\end{proof}
This concludes the proof of Theorem~\ref{thm:AKLS2}.\end{proof}

\begin{theorem}\label{thm:DNC-below}
	Given any order function $g$, there is an order function $h$ and an $f \in \DNC_g$ such that $f$ computes no $\DNC_h$ function.
\end{theorem}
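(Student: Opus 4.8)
The plan is to run a basic bushy forcing inside $g^{<\omega}$, in the spirit of Theorems~\ref{thm:AKLS1} and~\ref{thm:AKLS2}, but with the roles of the two order functions exchanged: the bound $g$ on the generic is prescribed, and the bound $h$ on the functions the generic fails to compute is what we design. Conditions are pairs $(\sigma,B)$ with $\sigma\in g^{<\omega}\setminus B$, $B$ upward-closed and $k$-small above $\sigma$ for some $k\in\omega$, and throughout the construction I would maintain the invariant $k\le g(|\sigma|)$. This invariant is what lets the construction breathe inside the bounded space: since $g$ is nondecreasing, it guarantees that the full tree above $\sigma$ is $k$-bushy at every level, so $\sigma$ can be stretched to arbitrary length while staying outside the $k$-closure of $B$. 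In particular $f_{\+G}$ is total, and starting from $(\estr,B_{\DNC}\cap g^{<\omega})$ --- which is $2$-small above $\estr$, discarding finitely many initial values of $g$ so that $g\ge 2$ --- the arguments of Claims~\ref{lem:AKLS1-totality} and~\ref{lem:AKLS1-bad-set-avoidance} show that $f_{\+G}\in\DNC_g$.

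The order function $h$ and an auxiliary computable function $q$ are defined simultaneously by the recursion theorem, as in Theorem~\ref{thm:AKLS2}. Fix an enumeration $(\Gamma_i)_{i\in\omega}$ of the $h$-valued functionals; $q(\tau,k,i)$ is the index of the partial computable function that, on input $y$, searches for a $k$-bushy tree above $\tau$ contained in $g^{<\omega}$ all of whose leaves $\lambda$ satisfy $\Gamma_i^{\lambda}(y)\converges = v$ for a single value $v<h(y)$, and outputs $v$. Let $\+D_i$ be the set of conditions $(\sigma,B)$ such that $\Gamma_i^{f}$ is not a $\DNC_h$ function for every $f\in[\sigma]\setminus[B]^{\prec}$; the heart of the argument is that each $\+D_i$ is dense. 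Given $(\sigma,B)$ with $B$ $k$-small above $\sigma$ and $k\le g(|\sigma|)$, I would first extend $\sigma$, within the $k$-small bad set, to a string $\tau$ of some length $\ell$ chosen large enough that $g(\ell)\ge\bigl(h(q(\tau,k,i))+1\bigr)k$, then set $x=q(\tau,k,i)$ and split on $A=\dset{\rho\in g^{<\omega}}{\Gamma_i^{\rho}(x)\converges}$ as in Claim~\ref{lem:AKLS1-density}. If $\Gamma_i^{\tau}(x)\converges$ then $\varphi_x(x)\converges=\Gamma_i^{\tau}(x)$ (the trivial tree above $\tau$ witnesses the search) and already $(\tau,B)\in\+D_i$. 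Otherwise, if $A$ is $(h(x)\cdot k)$-big above $\tau$, the smallness preservation property yields a value $v<h(x)$ with $\dset{\rho}{\Gamma_i^{\rho}(x)=v}$ $k$-big above $\tau$, so $\varphi_x(x)\converges=v$; extending $\tau$ to some $\rho$ in this set and outside the $k$-closure of $B$ puts $(\rho,B)$ into $\+D_i$ and keeps the invariant. If instead $A$ is $(h(x)\cdot k)$-small above $\tau$, then $A\cup B$ is $(h(x)k+k-1)$-small above $\tau$, and since $g(\ell)\ge(h(x)+1)k>h(x)k+k-1$ the condition $(\tau,A\cup B)$ extends $(\sigma,B)$, respects the invariant, and forces $\Gamma_i^{f}(x)\diverges$.

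What makes this more than a routine variant of the earlier arguments is the choice of $\ell$ in the first step. The requirement is $g(\ell)\ge\bigl(h(Q_i(\ell,k))+1\bigr)k$, where $Q_i(\ell,k)=\max\dset{q(\tau,k,i)}{\tau\in g^{<\omega},\ |\tau|=\ell}$ is a computable function of $(\ell,k,i)$ (relative to the index of $h$), since $q$ is computable and only finitely many strings of length $\ell$ lie in $g^{<\omega}$. But $Q_i(\ell,k)$ grows with $\ell$, so we must design $h$ to grow slowly enough that $h(Q_i(\ell,k))=o(g(\ell))$ for each fixed $i$ and $k$; as $g$ is unbounded and an order function may increase as slowly as we please, this can be arranged by putting $h(N)=\max\dset{m}{F(m)\le N}$ for a suitable nondecreasing computable threshold function $F$ built, via the recursion theorem in tandem with $q$, from $g$, the functions $Q_i$, and the (computable) bound on how fast $k$ can grow along the construction. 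Pinning down this self-referential choice of $h$ is the main obstacle; the remainder is bookkeeping. Once $h$ is fixed, a filter meeting every $\+D_i$ together with the density requirements for totality yields $f_{\+G}\in\DNC_g$, and since any functional computing a $\DNC_h$ function from $f_{\+G}$ may be replaced by an $h$-valued functional agreeing with it on $f_{\+G}$, it follows that $f_{\+G}$ computes no $\DNC_h$ function.
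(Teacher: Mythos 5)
Your forcing skeleton and the density step are essentially the paper's: basic bushy forcing inside $g^{<\omega}$, the invariant that the smallness parameter stays below $g$ at the current length, and the two-case split of Claim~\ref{lem:AKLS1-density} relative to an index $x=q(\tau,k,i)$ whose value is controlled by maximizing over the finitely many $\tau$ of a given length. The problem is that the actual content of this theorem is the construction of the order function $h$, and that is precisely the step you leave undone: everything is conditional on the unproved assertion that a single computable, nondecreasing, unbounded $h$ can be defined so that for every $i$ and every $k$ that arises there is an $\ell$ with $g(\ell)\ge(h(Q_i(\ell,k))+1)k$, where $q$ (and hence $Q_i$) already refers to $h$ through the clause ``$v<h(y)$''. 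Saying this ``can be arranged'' via a threshold function $F$ and the recursion theorem is not a construction, and one of the ingredients you invoke for $F$ --- ``the (computable) bound on how fast $k$ can grow along the construction'' --- does not exist in a generic-filter formulation: which conditions, hence which pairs $(\ell,k)$, occur depends on the filter. To get such a bound you must fix a canonical stage-by-stage construction with canonical choices, and doing that coherently with the definition of $h$ is the heart of the proof, not bookkeeping.

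For comparison, the paper dissolves the circularity without the recursion theorem by defining $h$ and the construction's parameters in one ordinary induction: having defined $h$ up to a checkpoint $n_i$, it takes the least $k$ with $g(k)\ge(h(n_i)+1)\,g(n_i)$, forms the finitely many indices $q(\sigma)$ for $\sigma\in g^k$ (these searches mention only the already-fixed bound $h(n_i)$ and bushiness $g(n_i)$), and then extends $h$ with the \emph{constant} value $h(n_i)$ up to $m=\max_{\sigma\in g^k}q(\sigma)$ before bumping it at $m+1$ and setting $n_{i+1}=m+1$. Thus $h(q(\rho))=h(n_i)$ is known before any string is chosen, the inequality your Case-3 step needs holds by construction, and the checkpoints $n_i$ and bushiness parameters $g(n_i)$ used when building $f$ are fixed inside the definition of $h$ itself, so no bound on the evolution of $k$ along an arbitrary filter is ever needed. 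Your $Q_i(\ell,k)$ maximization is the right instinct, but without this interleaving (or a fully verified recursion-theorem substitute, including totality of $F$ and of $h$) the proof is incomplete. Two small local points: in the big case, $\varphi_x(x)$ converges to whichever value the search finds first, which need not be the $v$ supplied by smallness preservation, so you should extend into the value class of $\varphi_x(x)$ itself; and your first case needs the remark that if $\Gamma_i^\tau(x)\converges$ then, by monotonicity of oracle computations, every witnessing tree above $\tau$ yields that same value, which is why $\varphi_x(x)=\Gamma_i^\tau(x)$.
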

\begin{proof}
	We define $h$ inductively. Let $n_0 = 0$ and let $h(0) = 2$. At the $i^\text{th}$ stage of the construction, suppose we have defined it up to $n_i$. Let $k \ge n_i + 1$ be the least such that $g(k) \ge (h(n_i) + 1)\cdot g(n_i)$. Let $q(\sigma)$ be the computable function such that if $\sigma \in g^k$, then $q(\sigma) \ge k$, and $\varphi_{q(\sigma)}(n)$ searches for a $g(n_i)$-bushy tree $T$ above $\sigma$ contained in $g^{<\omega}$ such that for every leaf $\tau$ of $T$, $\Phi_{i-1}^\tau$ converges to the same value $l < h(n_i)$. Let $m = \max_{\sigma \in g^k} q(\sigma)$. Let $h(n) = h(n_i)$ for all $n$ such that $n_i < n \le m$ and let $h(m+1) = h(m) + 1$. Finally, let $n_{i+1} = m+1$, ensuring that $h$ is unbounded. The fact that $k \ge n_i + 1$ ensures that $h$ is total.

	It remains to construct $f$. Let $B_0 = B_{\DNC}$ and let $\sigma_0 \in g^{1} \setminus B_{\DNC}$. Assume inductively that $\sigma_i \in g^{n_i} \setminus B_i$ and that $B_i$ is $g(n_i)$-small above $\sigma_i$. Let $k$ and $q$ be defined as above and extend $\sigma$ to a string $\rho \in g^k \setminus B_i$. For $j < h(q(\rho))$, let \[A_j = \dset{\tau \in g^{<\omega}}{\Phi^\tau_i(q(\rho))\converges = j}.\]
	If $A_j$ is $g(n_i)$-big above $\rho$ for some $j$, then $\varphi_{q(\rho)}(q(\rho))$ is defined. If $\varphi_{q(\rho)}(q(\rho)) = j'$ then there is a $\tau \in A_{j'} \setminus B_i$ extending $\rho$ such that $\Phi_{i-1}^\tau(q(\rho)) = \varphi_{q(\rho)}(q(\rho))$. Otherwise, $C = (\bigcup_{j < h(q(\rho))} A_j) \cup B_i$ is $(h(q(\rho)) + 1)\cdot g(n_i)$-small above $\rho$. Since $g(k) \ge (h(n_i) + 1) \cdot g(n_i) = (h(q(\rho)) + 1)\cdot g(n_i)$, $C$ is $g(k)$-small above $\rho$. So let $B_{i+1} = C$ and let $\sigma_{i+1}$ be any string in $g^{n_{i+1}} \setminus B_{i+1}$ extending $\rho$.
	Finally, let $f = \bigcup_{i \in \omega} \sigma_i$.
\end{proof}

By alternating the strategies of Theorems~\ref{thm:AKLS2} and \ref{thm:DNC-below}, one can also show:
\begin{theorem}
	Given any order function $g_0$, there is another order function $g_1$ and functions $f_0 \in \DNC_{g_0}$ and $f_1 \in \DNC_{g_1}$ such that $f_0$ computes no $\DNC_{g_1}$ function and $f_1$ computes no $\DNC_{g_0}$ function.
\end{theorem}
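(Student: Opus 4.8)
The plan is to run two copies of the forcing machinery above in parallel, simultaneously building approximations $\sigma^0_s \prec f_0$ and $\sigma^1_s \prec f_1$, bad sets $B^0_s \subseteq g_0^{<\omega}$ and $B^1_s \subseteq g_1^{<\omega}$, and the order function $g_1$ itself, by a single recursion. As in the proofs of Theorems~\ref{thm:AKLS2} and~\ref{thm:DNC-below}, the recursion theorem gives the construction access to an index for $g_1$ in advance. One must meet, for every Turing functional $\Phi$, the requirement $R^0_\Phi$: $\Phi^{f_0}$ is not a $\DNC_{g_1}$ function; and, for every $g_0$-valued functional $\Gamma$ (which suffices, since any $g_0$-bounded function computable from $f_1$ is computable from $f_1$ by a $g_0$-valued functional), the requirement $R^1_\Gamma$: $\Gamma^{f_1}$ is not a $\DNC_{g_0}$ function; together with $f_0 \in \DNC_{g_0}$, $f_1 \in \DNC_{g_1}$, and totality of both. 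I would attack the requirements in alternation: at even stages work on some $R^0_\Phi$ using the strategy of Theorem~\ref{thm:DNC-below} on the $f_0$-side, and at odd stages work on some $R^1_\Gamma$ using the strategy of Theorem~\ref{thm:AKLS2} on the $f_1$-side.

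The two sides interact only through the single object $g_1$, and the point is that $g_1$ can serve both masters because their demands concern disjoint intervals of $\omega$. At an even stage $g_0$ is fixed and $g_1$ is committed only on an initial segment; following Theorem~\ref{thm:DNC-below} I would commit $g_1$ to a \emph{constant} value $v$ on a fresh interval $I$, taking $v$ at least as large as the current bushiness parameter $w$ of $B^1$ above $\sigma^1$ (note $v$ need only be finite, not small), and then run the big/small case analysis for $\Phi$ inside $g_0^{<\omega}$: if the set of strings forcing $\Phi$ to take a value below $v$ is big above the current $f_0$-stem, diagonalize by a $\varphi$-collision as in Theorem~\ref{thm:DNC-below}, forcing $\Phi^{f_0}(x) = \varphi_x(x)$ for a point $x \in I$; otherwise add that set to $B^0$, which forces $\Phi^{f_0}(x)$ to diverge or to reach or exceed $g_1(x) = v$, and extend $\sigma^0$ into a region of $g_0^{<\omega}$ where $g_0$ is large enough to keep the enlarged $B^0$ small (possible since $g_0$ is unbounded, and it is $g_0$, not $g_1$, that must absorb this bad set). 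Finally extend $\sigma^1$ across $I$: since $g_1 \equiv v \ge w$ on $I$, the full $g_1$-branching tree over $I$ is $w$-big above $\sigma^1$, so some leaf avoids $B^1$, and by the small set closure property (Lemma~\ref{lem:small-set-closure}) $B^1$ stays $w$-small above that leaf.

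At an odd stage one does the dual: $g_0$ is fixed, and following Theorem~\ref{thm:AKLS2} one is free to \emph{choose} $g_1$ as large as desired on a fresh interval $I'$. I would inspect the set $\dset{\tau}{\Gamma^\tau(q)\converges}$ above $\sigma^1$ for the appropriate $q$; if it is big, diagonalize $f_1$ against $\Gamma$ by a $\varphi$-collision, and if it is small, set $g_1$ on $I'$ large enough to absorb it together with $B^1$ while leaving room to extend $\sigma^1$, enlarge $B^1$, and move $\sigma^1$ into $I'$. On the $f_0$-side one simply extends $\sigma^0$ by a step in $g_0^{<\omega}$ avoiding $B^0$, to keep $f_0$ total. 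Setting $f_0 = \bigcup_s \sigma^0_s$, $f_1 = \bigcup_s \sigma^1_s$, and letting $g_1$ be the order function produced (nondecreasing by construction and unbounded, since each odd stage---or an explicit increment built into each even stage---raises it), the stated properties follow: $f_0 \in \DNC_{g_0}$ and $f_1 \in \DNC_{g_1}$ because the stems stay in $g_0^{<\omega}$ resp.\ $g_1^{<\omega}$ and never enter the bad sets, which contain $B_\DNC$; and $R^0_\Phi$, $R^1_\Gamma$ are met by the two strategies.

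I expect the main obstacle to be the bookkeeping that reconciles the opposing pressures on $g_1$: the $f_0$-strategy wants $g_1$ flat over long stretches while the $f_1$-strategy wants it to jump upward, and one must verify that $B^1$---which grows at odd stages and is absorbed by the jumps of $g_1$---never outgrows the constant value $g_1$ is allowed on its flat stretches, so that $\sigma^1$ can always be pushed forward. As indicated, this works because the two strategies act on disjoint intervals and because the flat value of $g_1$ may be any finite number, in particular the running width of $B^1$; but fitting together the interval lengths, the values of the auxiliary bushy-tree-search functions, and the growth rate of $g_0$ is the delicate part, and it is entirely parallel to the recursions already carried out in the proofs of Theorems~\ref{thm:AKLS2} and~\ref{thm:DNC-below}.
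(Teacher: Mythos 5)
Your proposal is correct and is essentially the paper's intended argument: the paper gives no details beyond the remark that the result follows ``by alternating the strategies of Theorems~\ref{thm:AKLS2} and~\ref{thm:DNC-below},'' which is exactly your parallel construction, with the recursion theorem supplying the index of $g_1$ and the two strategies acting on disjoint intervals (flat stretches of $g_1$ at least as large as the current smallness parameter of $B^1$ for the $f_0$-side, upward jumps of $g_1$ absorbing the enlarged bad set on the $f_1$-side). Just be careful, as in the proof of Theorem~\ref{thm:DNC-below}, to move the stem $\sigma^0$ to a level where $g_0$ exceeds the prospective smallness bound \emph{before} adding the convergence sets to $B^0$, since otherwise the enlarged bad set, though small, could swallow every extension of the low-level stem inside $g_0^{<\omega}$.
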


\section{Bushy tree forcing}
Bounded $\DNC$ functions, being of PA degree, compute \ML\ random reals. Ku\v{c}era \cite{Kucera} showed that there is an order function $h$ such that every \ML\ random real computes a $\DNC_h$ function. Theorem~\ref{thm:AKLS1} then implies that there are unbounded $\DNC$ functions that compute no \ML\ random real. Greenberg and Miller established a stronger version of this fact:

\begin{theorem}[Greenberg and Miller \cite{MillerGreenberg}]\label{thm:slow-no-random}
For each order function $h$, there is an $f \in \DNC_h$ that computes no \ML\ random real.
\end{theorem}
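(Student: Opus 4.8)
I would prove this by bushy tree forcing, building $f$ as the branch of a shrinking sequence of bushy trees while simultaneously enumerating a Solovay test that captures $\Gamma^f$ for every Turing functional $\Gamma$ with $\Gamma^f$ total. A condition is a pair $(T,B)$: $T\subseteq\bstrings$ is an infinite tree that is $k$-bushy above its stem $\sigma$ for some $k\ge 2$, and $B$ is an upward-closed, $k$-closed ``bad set'' that is $k$-small above $\sigma$; an extension may lengthen the stem, shrink $T$, and enlarge $B$, with the branch forced to stay inside $T$ and avoid $B$. The initial condition takes $B=B_\DNC$ and $T$ the tree of DNC extensions of a DNC stem, chosen long enough that $T$ is, say, $2$-bushy above it; this is legitimate because $B_\DNC$ is $2$-small above $\estr$, and since $h$ is an order function we can, at any later point, retreat to a still longer stem above which $T$ is as bushy as we please (the totality density of Claim~\ref{lem:AKLS1-totality}). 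Enumerate the requirements to be met, besides the usual totality and $\DNC_h$-ness ones, as $R_0,R_1,\dots$, where $R_j$ is a pair $(\Gamma_j,j)$ of a functional and a measure index (each functional recurring infinitely often), and build in parallel a uniformly c.e.\ sequence of open sets $(V_j)_j$ with $\mu(V_j)\le 2^{-j}$, so that $(V_j)$ is a Solovay test.

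The heart of the argument is a density lemma: for each $j$, the set of conditions that force either ``$\Gamma_j^g$ is partial for every branch $g$'' or ``$\Gamma_j^g\in V_j$ for every branch $g$'' is dense. Given $(T,B)$ with $T$ $k$-bushy above $\sigma$, retreat to a long stem $\sigma'$, pick $l\ge j$, and consider the c.e., upward-closed set $X=\dset{\tau\in T}{\Gamma_j^\tau\upto l\converges}$. If $X$ is $k$-small above $\sigma'$, then $X\cup B$ is $(2k-1)$-small above $\sigma'$ by the smallness preservation property (Lemma~\ref{lem:big-subset}); we add it to the bad set and shrink $T$ to avoid $X$, which — because $X$ is $k$-small, trading back a little bushiness via the larger branching available higher up if necessary — leaves an infinite bushy tree on all of whose branches $\Gamma_j^g\upto l$, hence $\Gamma_j^g$, diverges. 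Otherwise, applying Lemma~\ref{lem:big-subset} iteratively across the $2^l$ sets $\dset{\tau}{\Gamma_j^\tau\upto l=\rho}$ ($\rho\in 2^l$) shows that $\dset{\tau}{\Gamma_j^\tau\upto l=\rho^\ast}$ is $k$-big above $\sigma'$ for some single $\rho^\ast$; we effectively search for a finite $k$-bushy $S\subseteq T$ above $\sigma'$ on all of whose leaves $\Gamma_j$ converges — with use at most the length of the leaf — to $\rho^\ast$ on the first $l$ bits, shrink the condition to the part of $T$ extending $S$, and enumerate the single cylinder $[\rho^\ast]$, of measure $2^{-l}\le 2^{-j}$, into $V_j$. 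Then $\Gamma_j^g\upto l=\rho^\ast$ for every branch $g$, so $\Gamma_j^g\in V_j$. Summing, $\sum_j\mu(V_j)\le 2$, so $(V_j)$ is a genuine Solovay test; and for any $\Gamma$ with $\Gamma^f$ total, every $j$ with $\Gamma_j=\Gamma$ must have been handled in the trapping case, so $\Gamma^f$ lies in infinitely many $V_j$ and hence is not \ML\ random. As $\Gamma$ was arbitrary, $f\in\DNC_h$ computes no \ML\ random real.

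The step I expect to be the real obstacle is keeping $(V_j)$ genuinely c.e.\ even though the branch $f$ itself is not computable. This forces the trapping case to be organized as a $\Sigma^0_1$ search for a monochromatic bushy certificate $S$ — which is what lets us name the cylinder $\rho^\ast$ to enumerate into $V_j$ without knowing $f$ — and, symmetrically, requires that the ``force divergence'' case be carried out effectively, shrinking past the c.e.\ set $X$ while preserving enough bushiness; the $k$-closure convention (Lemma~\ref{lem:small-set-closure}) and the standing assumption that the bad set is small above the current stem are what make this go through. The remaining nuisance is bookkeeping: each divergence step can inflate the smallness parameter of the bad set, so one fixes $R_j$, then the stem $\sigma'$, then $l$, and only then runs the dichotomy — and it is exactly the unboundedness of $h$ that guarantees a stem with sufficient branching above it is always available, so the construction never gets stuck.
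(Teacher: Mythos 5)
The paper does not actually prove Theorem~\ref{thm:slow-no-random}; it cites Greenberg--Miller and notes only that their argument is \emph{basic} bushy forcing (finite-string conditions) and does not need c.e.\ bad sets, which is why it partially relativizes. Your tree-condition, Solovay-test organization could in principle work, but as written it has a genuine combinatorial gap in the density lemma: in the trapping case you know only that $X=\dset{\tau\in T}{\Gamma_j^\tau\upto l\converges}$ is $k$-big above $\sigma'$, and you invoke Lemma~\ref{lem:big-subset} to extract a single $\rho^\ast\in 2^l$ whose preimage is $k$-big. The lemma gives that conclusion only if the union of the $2^l$ sets is roughly $2^l(k-1)+1$-big; from mere $k$-bigness of $X$ you get a monochromatic piece that is only about $k/2^l$-big, and you cannot run the shrunken condition at that parameter because $B$ is only known to be $k$-small (smallness does not transfer to smaller bushiness constants). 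So the dichotomy must be made at the inflated parameter $\approx 2^l\cdot k$, exactly as in Claim~\ref{lem:AKLS1-density}, where the split is at $h(e)\cdot k$. Inside $h^{<\omega}$ this inflation forces a specific ordering of moves that is the opposite of the one you state (``stem $\sigma'$, then $l$''): you must fix $l$ (from $j$ and the measure budget) first and then retreat to a stem with $h(|\sigma'|)$ exceeding the inflated smallness bound, because once the bad set is $K$-small for $K$ larger than the local branching, a whole level above $\sigma'$ may lie in its $K$-closure and the condition can die; for the same reason ``shrink $T$ to avoid $X$'' inside a merely $k$-bushy $T$ is not available even in your $(2k-1)$ case --- the correct move is to enlarge the bad set and rely on branching already secured above the stem.

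Your second difficulty, keeping $(V_j)$ c.e., is the right worry but is not resolved by making the certificate search $\Sigma^0_1$: the search is run at a condition of the generic filter, whose stem was chosen to avoid (the closure of) a bad set containing $B_\DNC$, which is not c.e.; so ``enumerate $[\rho^\ast]$ when $R_j$ is handled'' is not an effective enumeration, and your accounting of one cylinder of measure $2^{-j}$ per $j$ does not describe a Solovay test. The standard repair is to enumerate, for \emph{every} candidate stem $\sigma'$ and bushiness parameter, the cylinder produced by the effective search in the computable ambient tree, choosing $l=l(\sigma',j)$ so large that the sum of $2^{-l(\sigma',j)}$ over all $\sigma'\in h^{<\omega}$ and parameters is at most $2^{-j}$, and then have the construction act through that same search when it actually handles $R_j$; this changes the measure bookkeeping you gave. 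Relatedly, taking the initial $T$ to be the tree of DNC strings makes $T$ non-computable and wrecks both the effectivity of the searches and the c.e.-ness of the test; keep $T$ computable (e.g.\ $h^{<\omega}$ above the stem) and carry DNC-ness in the bad set $B_\DNC$, as in Theorem~\ref{thm:AKLS1}. With these repairs your plan becomes essentially the known argument, though it is worth noting that the cited proof avoids infinite tree conditions altogether.
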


The proof uses basic bushy forcing, and does not require that the bad sets be c.e. In fact, the same technique could be used to show that for each order function $h$ and each oracle $X$, there is an $f \in \DNC^X_h$ that computes no \ML\ random real. Our main result in this section cannot be partially relativized in this manner (it strongly depends on the fact that the bad sets are c.e.) but improves upon the Greenberg-Miller theorem in a different way. Recall that a real is \emph{Kurtz random} (sometimes also called \emph{weakly random}) if it is not contained in any measure $0$ $\Pi^0_1$ class.

\begin{theorem} \label{thm:slow-no-Kurtz}
For each order function $h$, there is an $f \in \DNC_h$ that computes no Kurtz random real.
\end{theorem}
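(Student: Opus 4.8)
The plan is to run a basic bushy forcing construction in the spirit of Theorem~\ref{thm:AKLS1}, building $f$ as a sufficiently generic object, and to arrange, for each Turing functional $\Gamma_e$, a $\Pi^0_1$ class $P_e$ of measure $0$ such that if $\Gamma_e^f$ is total and $\{0,1\}$-valued then $\Gamma_e^f\in P_e$. Since by definition no real lying in a measure $0$ $\Pi^0_1$ class is Kurtz random, this shows $f$ computes no Kurtz random real. The new wrinkle compared with Theorems~\ref{thm:AKLS2} and~\ref{thm:DNC-below} is that the order function $h$ is handed to us, so we cannot build it to outpace the bushiness we consume; instead we work inside the tree $h^{<\omega}$ and keep a condition $(\sigma,B)$ whose bad set $B$ is $k$-small above $\sigma$ for a bound $k\le h(|\sigma|)$. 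This suffices for the totality and diagonal-noncomputability requirements: extending $\sigma$ by one level inside $h^{<\omega}\setminus B$ is always possible because the $h(|\sigma|)\ge k$ immediate successors of $\sigma$ form a $k$-big set, and (as in Claim~\ref{lem:AKLS1-bad-set-avoidance}) $f$ avoids $B\supseteq B_\DNC$. The only subtlety is that acting on a requirement enlarges $k$; since $h$ is unbounded and $B$ stays $k$-closed along stem extensions, we simply postpone each action until the stem is long enough that $h(|\sigma|)$ absorbs the new bound, and interleave the requirements in a fixed order.

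The engine is a level-by-level version of the convergence-splitting argument of Claim~\ref{lem:AKLS1-density}. We handle $\Gamma_e$ through countably many bouts. At the start of bout $n$ we hold a condition $(\sigma,B)$ with $B$ $k$-small above $\sigma$ and a binary string $\pi$ of length $n-1$ already forced to be an initial segment of $\Gamma_e^f$ (meaning $\Gamma_e^\sigma\upto(n-1)=\pi$, so by use monotonicity every $\tau\succeq\sigma$ with $\Gamma_e^\tau(n-1)\converges$ has $\Gamma_e^\tau\upto(n-1)=\pi$). Let $D=\dset{\tau\succeq\sigma}{\Gamma_e^\tau(n-1)\converges}$. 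If $D$ is small above $\sigma$ (relative to a threshold like $3k$), then $B\cup D$ is still small above $\sigma$, and passing to the suitably closed condition $(\sigma,B\cup D)$ forces $\Gamma_e^f(n-1)$ to diverge --- so $\Gamma_e^f$ is not a real and $\Gamma_e$ is settled. Otherwise $D$ is big above $\sigma$, and writing $D$ as the union of the three sets where $\Gamma_e(n-1)$ takes value $0$, value $1$, or value $\ge 2$, the smallness preservation property (Lemma~\ref{lem:big-subset}) gives one of them $k$-big above $\sigma$; if the value-$\ge 2$ piece is big we lock in a non-$\{0,1\}$ value and again settle $\Gamma_e$, and otherwise we find $\tau\succeq\sigma$ in $h^{<\omega}\setminus B$ with $\Gamma_e^\tau(n-1)=b$ for the appropriate $b$, pass to $(\tau,B)$, and set $\pi:=\pi b$. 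This forces $\Gamma_e^f$ into the clopen set $[\pi b]$ of measure $2^{-n}$ and completes bout $n$. If we never reach a settling step, $\Gamma_e^f$ is total, $\{0,1\}$-valued, and forced into $\bigcap_n[\pi_n]$ with $\pi_1\prec\pi_2\prec\cdots$.

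The step I expect to be the crux --- and, as the authors flag, the reason the argument depends essentially on the bad sets being computably enumerable --- is upgrading $\bigcap_n[\pi_n]$ to an honest $\Pi^0_1$ null class. The sequence $(\pi_n)$ is produced by the forcing, hence is only $\emptyset'$-computable in general, so literally $\bigcap_n[\pi_n]$ is a $\Pi^0_1(\emptyset')$ singleton rather than a $\Pi^0_1$ class, and that is not good enough for unrelativized Kurtz randomness. The fix is to define the covering class directly from the c.e.\ bad set rather than from the generic. Fix the stem $\sigma_e$ and bound $k_e$ in effect when $\Gamma_e$ is first addressed (legitimate computable data, available in advance by the recursion theorem), and let $P_e$ be an intersection over levels $\ell$ of clopen sets carved out of the $\Sigma^0_1$ predicate ``the set of $\tau\succeq\sigma_e$ in $h^{<\omega}$ not meeting the (c.e., $k_e$-closed) bad set on which $\Gamma_e$ outputs a prescribed $\ell$-bit string is $k_e$-big above $\sigma_e$'' --- using the $k$-closure of the bad set (c.e., by Lemma~\ref{lem:small-set-closure}) to keep the predicate genuinely $\Sigma^0_1$, and a level-bound on the witnessing bushy trees to make it clopen and measure-controlled. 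The bouts must then be re-run so that at bout $n$ the new stem is chosen inside a witness to $k_e$-bigness above $\sigma_e$, not merely above the current stem, which forces a delicate trade-off between the bushiness budget and the rate at which the measure shrinks. Verifying simultaneously that $P_e$ is $\Pi^0_1$, that $\mu(P_e)=0$, and that $\Gamma_e^f\in P_e$ is where the real work lies.

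Finally, this is exactly where the method refuses to relativize: over an arbitrary oracle $X$ the set $B^X_\DNC$ need not be c.e., so the predicate defining $P_e$ is no longer $\Sigma^0_1$ and the covering class loses its effectivity --- which matches the statement in the introduction that Theorem~\ref{thm:slow-no-Kurtz}, unlike Theorem~\ref{thm:slow-no-random}, cannot be partially relativized.
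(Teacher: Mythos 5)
There is a genuine gap, and it sits exactly where you say ``the real work lies'': nothing in your construction actually produces a \emph{null} $\Pi^0_1$ class containing $\Gamma_e^f$. Forcing $\Gamma_e^f$ bit by bit only pins it inside $\bigcap_n[\pi_n]$, which, as you note, is not a $\Pi^0_1$ class; but the repair you sketch does not close the hole. First, the class $P_e$ you propose --- the strings $\pi$ of each length $\ell$ such that $\dset{\tau \succeq \sigma_e}{\Gamma_e^\tau \succeq \pi}$ minus the bad set is $k_e$-big above $\sigma_e$ --- need not have small measure: at a given level there can be a large (positive-measure, even nearly full-measure) collection of such $\pi$, since nothing in your bout procedure ever forces incompatible outputs off the tree; you only ever commit to the one branch the generic follows. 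Killing the measure requires an active combinatorial step, and this is what the paper's proof supplies via the argument of Theorem~\ref{thm:no-Kurtz}: among $2^{|L|}+1$ fresh bit positions one finds, by pigeonhole, two positions $r,s$ that can be forced the same way above \emph{every} leaf of the current finite subtree, and one then extends every leaf so that no path maps to a real with bit $r$ equal to $0$ and bit $s$ equal to $1$, cutting the measure of the image by a factor of $3/4$ at each stage. Second, basic bushy forcing cannot even maintain the confinement your re-run bouts would need: the subtree on which the measure is being squeezed is much thinner than $h^{<\omega}$, so its complement is \emph{big} above the stem and cannot be absorbed into a small bad set; later requirements, acting on a condition $(\sigma,B)$, are free to extend the stem anywhere in $h^{<\omega}\setminus B$ and so to escape the subtree, after which $\Gamma_e^f$ need not lie in any class you defined earlier.

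This is precisely why the paper abandons finite-stem conditions for this theorem and uses conditions $(\sigma,T,B)$ carrying an infinite, exactly $j$-bushy \emph{computable} tree $T$, with extensions required to refine the tree. One dense set (Claim~\ref{lem:forcing-totality}) forces $\Gamma^g$ to be total on $[T]\smallsetminus[B]^\prec$ or to diverge somewhere on it; a second dense set refines $T$ to a computable subtree $S$, by the $3/4$-shrinking argument above, so that $\Gamma(S)$ is a null class, and it is $\Pi^0_1$ because $S$ is computable and $\Gamma$ is total on $[T]$. The generic stays in $[S]$ forever because $S$ is part of the condition. The c.e.-ness of the bad set is what makes these subtree constructions effective (one must be able to search for bushy witnesses into $C_i\cup B$), which is the correct technical reason the argument does not partially relativize --- close in spirit to your last paragraph, but tied to building computable trees rather than to your definition of $P_e$. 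So your proposal is a reasonable warm-up in the style of Theorem~\ref{thm:AKLS1}, but it is missing the two ideas that constitute the proof: the measure-killing pigeonhole step and the tree-valued conditions that preserve its effect.
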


Theorem~\ref{thm:slow-no-Kurtz} is our first example of \emph{bushy tree forcing}, where the conditions consist of trees, not just finite strings. The atomic step in the forcing is based on the following result of Downey, Greenberg, Jockusch, Milans \cite{DowneyJockusch}, which we prove here for convenience.

\begin{theorem}[Downey, et al.\ \cite{DowneyJockusch}]\label{thm:no-Kurtz}
There is no single functional $\Gamma$ such that $\Gamma^f$ is Kurtz random for all $f \in \DNC_3$.
\end{theorem}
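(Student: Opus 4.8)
The plan is to imitate the proof of Theorem~\ref{thm:nonuniform}. Suppose such a $\Gamma$ exists. Since $\DNC_3$ is a $\Pi^0_1$ class in $3^\omega$ and $\Gamma$ is total on it, as in the proof of Theorem~\ref{thm:nonuniform} we may extend $\Gamma$ to a functional $\Xi$ that is total on $3^\omega$ and satisfies $\Xi^g \in 2^\omega$ for every $g$; then $\Xi^g = \Gamma^g$ is Kurtz random for every $g \in \DNC_3$. It therefore suffices to produce a measure-zero $\Pi^0_1$ class $Q \subseteq 2^\omega$ together with some $g \in \DNC_3$ such that $\Xi^g \in Q$.

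I would build $Q$ and $g$ by a construction that maintains a nested sequence $T_0 \subseteq T_1 \subseteq \cdots$ of finite $2$-bushy trees in $3^{<\omega}$ — with every leaf of $T_s$ properly extended in $T_{s+1}$ — together with a decreasing sequence of clopen sets $Q_0 \supseteq Q_1 \supseteq \cdots$ with $\mu(Q_s) \to 0$, maintaining the invariant that for each leaf $\tau$ of $T_s$ the computation $\Xi^\tau$ converges far enough that $[\Xi^\tau \upto n_s] \subseteq Q_s$ for a suitable $n_s$. At the end, $T_\infty = \bigcup_s T_s$ is a $2$-bushy tree with no dead ends. The set of strings in $3^{<\omega}$ that are not extendible to a $\DNC_3$ function is $2$-small above $\estr$ (every $2$-bushy tree contains a string extendible to a $\DNC_3$ function, exactly as in Theorem~\ref{thm:nonuniform}), and we may take it $2$-closed and upward-closed by Lemma~\ref{lem:small-set-closure}; a routine K\"onig-style argument using the smallness preservation property (Lemma~\ref{lem:big-subset}) then yields a branch $g$ of $T_\infty$ avoiding this bad set, i.e.\ $g \in \DNC_3$. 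Since every sufficiently long initial segment of $g$ extends some leaf of $T_s$, monotonicity of $\Xi$ gives $\Xi^g \in Q_s$ for every $s$, so $\Xi^g \in Q := \bigcap_s Q_s$, a $\Pi^0_1$ class of measure $0$ — contradicting that $\Xi^g$ is Kurtz random.

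The heart of the matter, and the step I expect to be the main obstacle, is the atomic step: passing from $T_s$ to $T_{s+1}$ while genuinely decreasing $\mu(Q_{s+1})$, so that $\mu(Q_s) \to 0$. Here the value $3$ enters exactly as in Theorem~\ref{thm:nonuniform}: for a leaf $\tau$ of $T_s$ and a bit position $c$, compactness of $[\tau] \cap 3^\omega$ makes $\{\tau' \in 3^{<\omega} : \tau' \succeq \tau \text{ and } \Xi^{\tau'}(c)\converges\}$ $3$-big above $\tau$, so by Lemma~\ref{lem:big-subset} one of the two classes $\{\tau' : \Xi^{\tau'}(c) = i\}$ is $2$-big above $\tau$; this yields a $2$-bushy extension of $\tau$ on which the $c$-th bit of $\Xi$ is decided, and the leftover unit of bushiness (since $3 = 2 + 2 - 1$) is what later allows a $\DNC_3$ branch to be routed through $T_\infty$. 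The trouble is that deciding one bit of $\Xi$ at each leaf only just offsets the splitting of leaves, so on its own it does not shrink $\mu(Q_s)$. To force the $\Xi$-images of the leaves of $T_s$ to collapse onto a common, shrinking target, I would run the construction of $Q$ self-referentially via the recursion theorem — in the same way that $\Xi^\tau(e) = \varphi_e(e)$ is forced in Theorem~\ref{thm:nonuniform} — so that the bit decided above a leaf is dictated in advance by what $Q$ has committed to at that level, and the leaf-extensions can then be spliced together coherently using the concatenation property (Lemma~\ref{lem:concatenation}). Making this measure bookkeeping work is the real content; given it, the rest of the argument is routine.
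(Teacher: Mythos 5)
Your setup (extend $\Gamma$ to a total $\Xi$, build nested finite $2$-bushy trees inside $3^{<\omega}$, take $Q$ to be the intersection of the clopen images of the leaf levels, and route a $\DNC_3$ branch through the resulting leafless $2$-bushy tree using $2$-smallness of the non-$\DNC$ strings) is exactly the paper's framework, and you have correctly located the crux in the measure-shrinking step. But the mechanism you propose for that step does not work, and the reason is worth pinning down. The obstruction is not that you fail to know $Q$'s commitments in advance, so the recursion theorem is not the missing ingredient. For a fresh bit position $k$, the smallness preservation property (Lemma~\ref{lem:big-subset}) only tells you that above each leaf $\sigma_j$ \emph{some} value of the $k$-th bit can be forced $2$-bushily; you have no control over \emph{which} value, and the forceable direction can differ from leaf to leaf. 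Since distinct leaves can have identical (or overlapping) $\Xi$-images, deciding one bit above each leaf need not shrink the measure of the union at all: if $[\Xi^{\sigma_1}]=[\Xi^{\sigma_2}]$ and bit $k$ can only be forced to $0$ above $\sigma_1$ but only to $1$ above $\sigma_2$, the union of the new image cylinders is the whole old cylinder. No amount of self-reference fixes this, because the directions are dictated by $\Xi$, not by your construction; a $Q$ that pre-commits to a bit value simply cannot be entered above leaves where the opposite value is the only forceable one.

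The paper's proof supplies precisely the missing combinatorial idea. After extending all leaves far enough that every string at the new level gives at least $2^{|L|}+1$ additional bits of convergence (where $L$ is the leaf set), one records for each fresh position $k$ the pattern $\rho_k\in 2^{|L|}$ of which direction is forceable above each leaf. By pigeonhole there are two positions $r,s$ with $\rho_r=\rho_s$, and the strategy ``force bit $r$ to $1$ where possible, otherwise force bit $s$ to $0$'' excludes, above \emph{every} leaf simultaneously, the event that bit $r$ is $0$ and bit $s$ is $1$. Because $r,s$ lie beyond the convergence length $m$ of all leaf images, that event has measure exactly one quarter of $\mu(\Gamma(S_i))$ conditionally on $\Gamma(S_i)$, so the image measure drops by a factor of $3/4$ at each stage regardless of how the forceable directions are distributed. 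Without this (or some equivalent device that handles leaves forced in complementary directions), your construction has no guarantee that $\mu(Q_s)\to 0$, so the proposal as it stands has a genuine gap at exactly the step you flagged.
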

\begin{proof}
	Suppose that such a functional $\Gamma$ exists. As before, we may assume that $\Gamma$ is total. It will be convenient to assume that $\Gamma$ satisfies the following additional property:
	\begin{itemize}
		\item If $\sigma \in 3^{<\omega}$ and $\Gamma^\sigma(n)$ converges, then $\Gamma^\sigma(n)$ converges within $|\sigma|$ steps and for all $n' < n$, $\Gamma^\sigma(n')$ also converges.
	\end{itemize}
	
	It is not difficult to see that this assumption can be made without any loss of generality and that if $\Gamma$ satisfies this property, then $\Gamma^\sigma = \tau$ is a computable relation for $\sigma \in 3^{<\omega}$ and $\tau \in 2^{<\omega}$.
	
	We build a computable $2$-bushy subtree $S$ of $3^\omega$ with no leaves such that the image of $\Gamma$ on $S$ (denoted by $\Gamma(S)$) has measure $0$. The tree $S$ will be obtained as the union of a sequence $\{\estr\} = S_0 \subset S_1 \subset S_2 ... $ of finite regular\footnote{All the leaves are of the same length.} binary subtrees of $3^{<\omega}$. Let $\Gamma(S_i)$ denote the set of reals \[ \bigcup\dset{[\Gamma^\sigma]}{\sigma \text{ is a leaf of $S_i$}}.\]
	
	In constructing $S_{i+1}$, we want to ensure that $\mu(\Gamma(S_{i+1})) \le (3/4)\mu(\Gamma(S_i))$. Let $L = \{\sigma_0, \sigma_1, ..., \sigma_{|L| - 1}\}$ be the set of leaves of $S_i$ and let $m = \max\dset{|\Gamma^\sigma|}{\sigma \in L}$. Our assumption on $\Gamma$ above allows us to find $m$ computably. Let $l$ be large enough so that for all $\tau \in 3^l$, $|\Gamma^\tau| \ge m + (2^{|L|} + 1)$. In other words, $l$ is large enough so that we obtain at least $2^{|L|} + 1$ \emph{additional} bits of convergence by extending a leaf of $S_i$ to any ternary string of length $l$. Note that such an $l$ exists by the compactness of $3^\omega$ and that we can find it computably. Let $T_j = \dset{\tau \in 3^l}{ \tau \succ \sigma_j}$.
	
	Suppose that $k$ is a position corresponding to one of the additional bits of convergence, i.e., $m \le k < m + 2^{|L|} + 1$. Since each $T_j$ is $3$-big above $\sigma_j$, by the smallness preservation property, either $\dset{\tau \in T_j}{\Gamma^\tau(k) = 1}$ is $2$-big above $\sigma_j$ (in which case, we say that we can \emph{force} the $k^\tH$ bit to be $1$ above $\sigma_j$) or $\dset{\tau \in T_j}{\Gamma^\tau(k) = 0}$ is $2$-big above $\sigma_j$ (we say that we can force the $k^\tH$ bit to be $0$ above $\sigma_j$). This allows us to obtain a binary sequence $\rho_k$ of length $|L|$, where $\rho_k(j) = 1$ if we can force the $k^\tH$ bit to be $1$ above $\sigma_j$, and $0$ otherwise. Moreover, we can computably find $2$-big sets above $\sigma_j$ that force the $k^\tH$ bit one way or another, so we can compute $\rho_k$, given $k$.
	
	By the pigeonhole principle, there exist $r$ and $s$ such that $m \le r, s < m + 2^{|L|} + 1$ and $\rho_r = \rho_s$. Note that for each $j < |L|$, even though we can force the $r^\tH$ and $s^\tH$ bits in the same way above $\sigma_j$, we may not be able to do so \emph{simultaneously}. We adopt the following strategy above each $\sigma_j$: If we can force the $r^\tH$ bit to be $1$ above $\sigma_j$, we do so, by extending $\sigma_j$ to a finite $2$-bushy tree $B_j$ with leaves in $3^l$ such that for every leaf $\tau$ of $B_j$, $\Gamma^\tau(r) = 1$. Otherwise, $\rho_r(j) = \rho_s(j) = 0$, so we force the $s^\tH$ bit to be $0$ above $\sigma_j$. The regular binary tree of height $l$ that results is $S_{i+1}$.
	
	For any leaf $\tau$ of $S_{i+1}$, it is \emph{not} the case that the $r^\tH$ bit of $\Gamma^\tau$ is $0$ and the $s^\tH$ bit is $1$: Say $\tau$ extends $\sigma_j$. By our choice of strategy, if the $r^\tH$ bit is $0$, then it must be the case that we could not have forced it to be $1$ above $\sigma_j$, and so we would have forced the $s^\tH$ bit to be $0$ above $\sigma_j$. 
	
	Let $P = \dset{X \in \Gamma(S_i)}{X(r) = 0 \text{ and } X(s) = 1}$. Then $\mu(P) = (1/4)\mu(\Gamma(S_i))$, since $r, s \ge m$. Now, $\Gamma(S_{i+1}) \subseteq \Gamma(S_i) \setminus P$, so $\mu(\Gamma(S_{i+1})) \le (3/4)\mu(\Gamma(S_i))$, as desired.
	
	Let $S = \bigcup_{i \in \omega} S_i$. Then $\mu(\Gamma(S)) = \mu(\bigcap_{i \in \omega} \Gamma(S_i)) = 0$. Let $f$ be any path through $S$ that is $\DNC_3$. Then $\Gamma^f \in \Gamma(S)$. But $\Gamma(S)$ is a null $\Pi^0_1$ class, which implies that $\Gamma^f$ is not Kurtz random, contradicting our initial assumption.
\end{proof}

Note that the construction in Theorem~\ref{thm:no-Kurtz} starts with a $3$-bushy tree and produces a $2$-bushy subtree with no leaves. 

\begin{definition}
	Let $j$ be an order function. We say that a tree $T \subseteq \bstrings$ is \emph{$j$-bushy} above a string $\sigma \in \bstrings$ if every element of $T$ is comparable with $\sigma$ and for each $\tau$ extending $\sigma$ that is not a leaf of $T$, there are at least $j(|\tau|)$ many immediate extensions of $\tau$. We say $T$ is \emph{exactly $j$-bushy above $\sigma$} if for each nonleaf $\tau$, there are exactly $j(|\tau|)$ immediate extensions of $\tau$ in $T$.
\end{definition}

\begin{proof}[Proof of Theorem~\ref{thm:slow-no-Kurtz}]
The forcing conditions have the form $(\sigma,T,B)$, where $\sigma \in \bstrings$, $T$ is a computable subtree of $\bstrings$, $B \subset T$ and:

\begin{itemize}
	\item $T$ is exactly $j$-bushy above $\sigma$ for some order function $j$,
	\item $B$ is c.e. and upward-closed in $T$ (i.e., if $\tau \in B$ then $\rho$ extending $\tau$ on $T$ is also in $B$),
	\item $B$ is $j(|\sigma|)$-small above $\sigma$ (and, without loss of generality, $j(|\sigma|)$-closed).
\end{itemize}

A condition $(\sigma,T,B)$ \emph{extends} another condition $(\tau,S,C)$ if $\sigma\succeq\tau$, $T\subseteq S$ and $B \cap T \supseteq C \cap T$. Let $\pP$ denote this partial order. As before, if $\+G$ is a filter on $\pP$, then $f_{\+G} = \bigcup \dset{\sigma}{(\sigma, T, B) \in \+G} \in \omega^{\le \omega}$. It is not difficult to verify that if $\+G$ is sufficiently generic, then $f_\+G$ is total and if $(\sigma, T, B) \in \+G$, then $f_\+G$ contains no initial segment in $B$.

If $\Gamma$ is any functional, let $\+D_\Gamma$ denote the set of $(\sigma, T, B) \in \pP$ such that either
\begin{itemize}
\item $g \in [T]\smallsetminus [B]^\prec$ implies that $\Gamma^g$ is total, or
\item there is an $n\in\omega$ such that $g \in [T]\smallsetminus [B]^\prec$ implies that $\Gamma^g(n)\diverges$.
\end{itemize}

\begin{claim}\label{lem:forcing-totality}
$\+D_\Gamma$ is dense in $\pP$.
\end{claim}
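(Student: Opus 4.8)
The plan is to fix a condition $(\sigma,T,B)\in\pP$, to write $j$ for the order function with $T$ exactly $j$-bushy above $\sigma$ and $k=j(|\sigma|)$, and to study the sets
\[
A_n=\dset{\tau\in T}{\tau\succeq\sigma\ \text{and}\ \Gamma^\tau(n)\converges}.
\]
Each $A_n$ is c.e.\ (since $T$ is computable), is upward-closed in $T$ (oracle use is monotone), and, after a harmless normalization of $\Gamma$ as in Theorem~\ref{thm:no-Kurtz}, satisfies $A_0\supseteq A_1\supseteq\cdots$. I would first pass to an extension of $(\sigma,T,B)$ of an especially convenient shape: extend $\sigma$ along $T\setminus B$ to a string $\sigma'$ with $j(|\sigma'|)$ as large as desired, then prune $T$ above $\sigma'$, discarding at every node the at most $k-1$ immediate successors that lie in $B$ (this uses that $B$ is $k$-closed, Lemma~\ref{lem:small-set-closure}). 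The outcome is a condition $(\sigma',T',\emptyset)$ with empty bad set whose tree is exactly $j'$-bushy above $\sigma'$ for an order function $j'$ with $k':=j'(|\sigma'|)=j(|\sigma'|)-k+1$ still arbitrarily large. From here on the bad set plays no role, and there is as much room as we want to enlarge it later.

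Everything then hinges on the dichotomy, with bigness measured inside $T'$: either (i) some $A_n$ is $k'$-small above $\sigma'$, or (ii) every $A_n$ is $k'$-big above $\sigma'$. In case (i) I would force $\Gamma^{f_\+G}(n)\diverges$: let $\widehat{A_n}$ be the $k'$-closure of $A_n$ inside $T'$, again c.e., $k'$-small, and $k'$-closed above $\sigma'$, so that $(\sigma',T',\widehat{A_n})\in\pP$ extends $(\sigma',T',\emptyset)$; since $A_n\subseteq\widehat{A_n}$ is upward-closed, no $g\in[T']\smallsetminus[\widehat{A_n}]^\prec$ has an initial segment in $A_n$, hence $\Gamma^g(n)\diverges$ and the condition lies in $\+D_\Gamma$.

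In case (ii) I would force $\Gamma^{f_\+G}$ total by pruning $T'$ to a subtree on which $\Gamma$ is everywhere total, built level by level: keep a finite bushy subtree $W_s\subseteq T'$ above $\sigma'$ whose leaves all lie in $A_s$ (so $\Gamma^\tau(m)\converges$ for all $m<s$ at each leaf $\tau$), and extend each leaf $v$ of $W_s$ by a finite bushy tree inside $T'$ with leaves in $A_{s+1}$ — possible as long as $A_{s+1}$ is big above $v$. If this fails at some stage, i.e.\ some leaf $v$ has $A_{s+1}$ small above it, then we are exactly in case (i) above $v$, and $(v,T'|_v,\widehat{A_{s+1}}\,)$ forces $\Gamma^{f_\+G}(s+1)\diverges$; otherwise the construction never halts and $\bigcup_s W_s$ is a computable subtree of $T'$ through which every path $g$ meets each $A_n$, so $\Gamma^g$ is total, and this subtree paired with the empty bad set is the desired member of $\+D_\Gamma$.

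The main obstacle I anticipate is the bookkeeping in case (ii): the subtree produced must be exactly $j''$-bushy above $\sigma'$ for an honest order function $j''$ (unbounded and nondecreasing), not merely for a constant, in order to be a legitimate condition — so one cannot be content with the bare bigness of $A_{s+1}$ above the nodes one reaches, but must propagate bigness with a bound matching the bushiness of $T'$ at the relevant level, and must ensure that all bigness witnesses are found inside $T'$ rather than in the ambient space $\bstrings$. The concatenation property (Lemma~\ref{lem:concatenation}) and the small-set-closure property are the tools for transporting bigness and smallness from $\sigma'$ to the nodes of the construction; the initial maneuver of pushing $\sigma'$ to a level where $j$ is enormous is what makes it plausible that the required bound never outstrips the available bushiness, and turning this into a clean accounting is where the real work lies.
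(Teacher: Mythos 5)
Your opening move is where the argument breaks. You cannot pass to a condition $(\sigma',T',\emptyset)$ by discarding at each node the immediate successors that lie in $B$: the bad set $B$ is only c.e., while the definition of the partial order $\pP$ requires the tree in a condition to be \emph{computable} (and exactly bushy). Deciding which immediate successors lie in $B$ is exactly deciding membership in a c.e.\ set, so the pruned tree $T'$ is in general not computable, and no delayed/enumeration-based pruning can repair this, since removing a node after you have already built above it destroys exact bushiness. This is not a cosmetic convenience you can drop, because the rest of your Case (ii) depends on it: with $B$ still present, the correct dichotomy is about $A_n\cup B$, not $A_n$ alone. Indeed, $A_{s+1}$ by itself may be small above a leaf $v$ merely because $v$ sits close to $B$, and the condition $(v,T\upharpoonright v,\widehat{A_{s+1}})$ you would then form does not extend the original condition unless its bad set also contains $B$ on the common tree; conversely, in the totality construction you cannot insist that every leaf land in $A_{s+1}$, since leaves in $B$ may give no convergence at all.

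The paper's proof keeps $B$ and runs your dichotomy on the union: either for every $\tau\in T$ above $\sigma$ and every $x$ the set $C_x\cup B$ is $j(|\tau|)$-big above $\tau$, or some $\tau,x$ witness smallness. In the second case the new condition is $(\tau, S, (C_x\cup B)\cap S)$ with $S$ the full subtree above $\tau$; the new bad set contains $B$, is c.e., and is small by the very case hypothesis, and it forces $\Gamma^{f_\+G}(x)\diverges$. In the first case one builds the totality-forcing subtree level by level exactly as you sketch, but with leaves in $C_i\cup B$ (a c.e.\ set, so the bushy witnesses can be found computably), regularizing to common levels using upward closure of $C_i$ and $B$ and setting $j'(x)=j(l_i)$ on $[l_i,l_{i+1})$; paths that stray into $B$ are harmless because $\+D_\Gamma$ only speaks about $g\in[T]\smallsetminus[B]^\prec$. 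Your regularity/bookkeeping worries are handled this way, but the reduction to an empty bad set has to go.
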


\begin{proof}
	Suppose $(\sigma, T, B) \in \pP$, where $T$ is exactly $j$-bushy above $\sigma$. Let $C_x = \dset{\tau \in T}{\Gamma^\tau(x)\converges}$. Note that $C_x$ is c.e.\ and upward closed in $T$. As usual, there are two cases.
	
	\emph{Case 1.} For every $\tau \in T$ extending $\sigma$ and every $x \in \omega$, $C_x \cup B$ is $j(|\tau|)$-big above $\tau$. In this case, we build a computable tree $S \subseteq T$ in stages that is exactly $j'$-bushy above $\sigma$ for an order function $j'$. Let $S_0$ consist of just $\sigma$ and its initial segments. Suppose inductively that we have $l_i \in \omega$ and $S_i \subset T$ such that:
	\begin{itemize}
		\item For each $x < l_i$, $j'(x)$ has already been defined and $j'(x) \le j(x)$.
		\item $S_i$ is a finite, regular $j'$-bushy tree of height $l_i$ above $\sigma$.
		\item For every leaf $\tau$ of $S_i$, either $\Gamma^\tau(x)\converges$ for every $x < i$ or $\tau \in B$.
	\end{itemize}
	
	Let $\tau$ be a leaf of $S_i$. By assumption, $C_i \cup B$ is $j(|\tau|)$-big above $\tau$, so we extend $\tau$ to a finite tree with leaves in $C_i \cup B$ that is $j(|\tau|)$-bushy above $\tau$. Note that since $C_i \cup B$ is c.e., we can find such a tree computably. The tree $S'_{i+1}$ that results from carrying out this operation above each leaf of $S_i$ may not be regular, but since both $C_i$ and $B$ are upward closed in $T$ and $T$ is $j$-bushy above the leaves of $S'_{i+1}$, we can extend them $j(l_i)$-bushily to some common level $l_{i+1}$, retaining the property that every leaf is in $C_i$ or in $B$, and producing the tree $S_{i+1}$. We now let $j'(x) = j(l_i)$ for $l_i \le x < l_{i+1}$. Note that $j'$ is nondecreasing because of our assumption that $j'(x) \le j(x)$ for $x < l_i$.
	
	Let $S = \cup_{i \in \omega} S_i$ and note that since $j'(|\sigma|) = j(|\sigma|)$, $B$ is already $j'(|\sigma|)$-closed. So the condition $(\sigma, S, B \cap S)$ extends $(\sigma, T, B)$. Finally, if $g \in [S]\smallsetminus [B]^\prec$, then for every $i$, $g \res l_i \in C_i$, so $\Gamma^g$ is total. 

	 \emph{Case 2.} Let $\tau$ and $x$ be counterexamples to the assumption in Case 1 and let $S$ be the full subtree of $T$ above $\tau$. Let $B' = (C_x \cup B) \cap S$. Then $B'$ is $j(|\tau|)$-small above $\tau$, so $(\tau, S, B') \in \pP$ and if $g \in [S]\smallsetminus [B']^\prec$, then $\Gamma^g(x)$ diverges.
\end{proof}

Let $\+H_\Gamma$ be the set of all conditions $(\sigma, T, B)$ such that if $g \in [T] \smallsetminus [B]^\prec$, then $\Gamma^g$ is not Kurtz random.

\begin{claim}
	$\+H_\Gamma$ is dense in $\pP$.
\end{claim}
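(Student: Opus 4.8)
The plan is to carry out, inside the forcing, the measure-reduction construction from the proof of Theorem~\ref{thm:no-Kurtz}, handling the bad set the way Case~1 of the proof of Claim~\ref{lem:forcing-totality} does: fold it into the c.e.\ sets one searches over, rather than trying to avoid it.

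First I would reduce to a convenient situation. Since $\+D_\Gamma$ is dense, we may assume the given condition $(\sigma,T,B)$ — with $T$ exactly $j$-bushy above $\sigma$ — lies in $\+D_\Gamma$. If it does so via the second clause (some $n$ with $\Gamma^g(n)\diverges$ for all $g\in[T]\setminus[B]^\prec$), then $\Gamma^g$ is not a member of $2^\omega$ for such $g$, hence not Kurtz random, and $(\sigma,T,B)\in\+H_\Gamma$; so assume $\Gamma^g$ is total for all $g\in[T]\setminus[B]^\prec$. As in Theorem~\ref{thm:no-Kurtz} we may also assume $\Gamma$ has the stated convergence property, so that $C_n = \dset{\tau\in T}{\Gamma^\tau(n)\converges}$ is computable and upward-closed in $T$. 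Totality together with the usual tree-pruning argument (if $C_n\cup B$ were $j(|\tau|)$-small above some $\tau\succeq\sigma$ in $T$, prune $T$ above $\tau$ to get a path in $[T]\setminus[B]^\prec$ on which $\Gamma(n)$ diverges) shows that $\widehat C_n := C_n\cup B$ — which is c.e.\ and upward-closed in $T$ — is $j(|\tau|)$-big above every $\tau\in T$ extending $\sigma$.

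Because the pigeonhole step of Theorem~\ref{thm:no-Kurtz} passes from $3$-big to $2$-big sets, it costs roughly a factor of two in bushiness, while $T$ is only $n_0 := j(|\sigma|)$-bushy near its stem. So I would first move the stem to some $\sigma'\succeq\sigma$ in $T\setminus B$ with $j(|\sigma'|)\ge 2n_0-1$ (such a $\sigma'$ exists since $T$ is $n_0$-bushy and $B$ is $n_0$-small above $\sigma$, so $T$ has a path avoiding $B$, and $j$ is unbounded), and then build a computable subtree $S=\bigcup_i S_i$ of $T$, exactly $j''$-bushy above $\sigma'$, where $j''$ is defined on the fly: it is constant $c_i$ on $[m_i,m_{i+1})$ with $m_0=|\sigma'|$, $c_0=n_0$, $c_{i+1}=c_i+1$, and each $m_{i+1}$ is chosen (when we are ready) large enough that $j(m_{i+1})\ge 2c_{i+1}-1$, the base case being the choice of $\sigma'$. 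The inductive step from a regular exactly-$j''$-bushy $S_i$ of height $m_i$ mirrors Theorem~\ref{thm:no-Kurtz}: let $\tau_0,\dots,\tau_{p-1}$ be the leaves of $S_i$ not in $B$, let $w$ bound the lengths of the strings $\Gamma^{\tau_t}$, and — using that $\widehat C_{w+2^p}$ is $j(m_i)$-big above each $\tau_t$ and is c.e.\ and upward-closed — extend each $\tau_t$ within $T$, computably, to a finite $j(m_i)$-bushy tree with leaves in $\widehat C_{w+2^p}$ and regularize to a common (large) level $m_{i+1}$, obtaining for each $t$ a set $A_t$ of level-$m_{i+1}$ extensions of $\tau_t$ that is $j(m_i)$-big above $\tau_t$, every member of which is in $B$ or has $\Gamma$ defined on all bits below $w+2^p+1$. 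For each $t<p$ and each $k$ with $w\le k<w+2^p+1$, the c.e.\ sets $F^v_{t,k}=\dset{\nu\in A_t}{\nu\in B\text{ or }\Gamma^\nu(k)=v}$ ($v\in\{0,1\}$) cover $A_t$, so by the smallness preservation property (using $j(m_i)\ge 2c_i-1$) at least one of them is $c_i$-big above $\tau_t$; a parallel search then computes $\rho_k\in 2^p$ with $\rho_k(t)=1$ only when $F^1_{t,k}$ is $c_i$-big above $\tau_t$. By pigeonhole $\rho_r=\rho_s$ for some $w\le r<s<w+2^p+1$, and above each $\tau_t$ we extend to an exactly-$c_i$-bushy subtree of $T$ of height $m_{i+1}$ with leaves in $F^1_{t,r}$ if $\rho_r(t)=1$ and in $F^0_{t,s}$ otherwise. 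The result $S_{i+1}$ is regular, exactly $j''$-bushy above $\sigma'$, and exactly as in Theorem~\ref{thm:no-Kurtz} no leaf $\nu\notin B$ of $S_{i+1}$ has $\Gamma^\nu(r)=0$ and $\Gamma^\nu(s)=1$, so (since $r,s\ge w$) $\mu(\Gamma_B(S_{i+1}))\le(3/4)\,\mu(\Gamma_B(S_i))$, where $\Gamma_B(S_i)=\bigcup\dset{[\Gamma^\nu]}{\nu\text{ a leaf of }S_i,\ \nu\notin B}$.

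Finally I would assemble the condition. Put $S=\bigcup_i S_i$ and let $B'$ be the $n_0$-closure of $B\cap S$ inside $S$; since $B$ is $n_0$-closed, $B'\subseteq B$, so $B'$ is c.e., upward-closed in $S$, and (as $j''(|\sigma'|)=n_0$) $j''(|\sigma'|)$-small above $\sigma'$. Thus $(\sigma',S,B')\in\pP$ and extends $(\sigma,T,B)$. For any $g\in[S]\setminus[B']^\prec$ we have $g\in[T]\setminus[B]^\prec$, so $\Gamma^g$ is total, and $g\res m_i$ is a non-$B$ leaf of $S_i$ for every $i$, whence $\Gamma^g\in\bigcap_i\Gamma_B(S_i)$. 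As in Theorem~\ref{thm:no-Kurtz}, $\bigcap_i\Gamma_B(S_i)$ is a $\Pi^0_1$ class (each $\Gamma_B(S_i)$ being a clopen set minus the $\Sigma^0_1$ open set cut out by the leaves lying in the c.e.\ set $B$, uniformly in $i$) and has measure $\lim_i(3/4)^i=0$, so $\Gamma^g$ is not Kurtz random and $(\sigma',S,B')\in\+H_\Gamma$. Hence $\+H_\Gamma$ is dense. I expect the main obstacle to be the measure-reduction construction of the third paragraph: it must reproduce the bit-forcing of Theorem~\ref{thm:no-Kurtz} inside the ambient tree $T$, build $j''$ while keeping $j''(|\sigma'|)=j(|\sigma|)$ (which is what forces the stem move), and keep $S$ computable — the last of these being why the bad set is folded into the c.e.\ sets $\widehat C_n$ and $F^v_{t,k}$ rather than avoided.
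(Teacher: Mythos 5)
Your overall plan is the paper's plan (reduce to totality via the density of $\+D_\Gamma$, then reproduce the measure-killing construction of Theorem~\ref{thm:no-Kurtz} inside $T$ to get a thinner exactly-bushy computable subtree whose image is a null $\Pi^0_1$ class), but as written the inductive step has a genuine effectivity gap. You begin each stage by letting $\tau_0,\dots,\tau_{p-1}$ be the leaves of $S_i$ \emph{not in $B$}, you extend only these, and you use their number $p$ and the bound $w$ on the lengths $|\Gamma^{\tau_t}|$ to set the pigeonhole range. Since $B$ is only c.e., identifying the non-$B$ leaves is not a computable task, so the construction is not effective and there is no reason the resulting tree $S$ is computable --- yet the definition of a condition requires a computable tree. (It also contradicts your own assertion that $S_{i+1}$ is regular and exactly $j''$-bushy: the skipped $B$-leaves would remain as dead ends at height $m_i$.) This is precisely the pitfall that your final paragraph says folding $B$ into the c.e.\ sets is meant to avoid, but the leaf selection reintroduces it. The repair is easy and in the spirit of what you already set up: run the identical searches above \emph{every} leaf of $S_i$ --- they terminate above a leaf in $B$ as well, because $B$ is upward closed, so everything above such a leaf already lies in $\widehat C_n$ and in both $F^0_{t,k}$ and $F^1_{t,k}$ --- take $p$ to be the total number of leaves and $w$ a computable bound on $|\Gamma^\tau|$ over all leaves (available from your convergence normalization), and restrict attention to the non-$B$ leaves only in the verification of the measure bound. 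Alternatively, use the paper's cleaner device: since $B$ is c.e., replace $\Gamma$ by a functional that is total on all of $[T]$ and agrees with $\Gamma$ off $[B]^\prec$, and then run the Theorem~\ref{thm:no-Kurtz} argument over all leaves with the plain clopen sets $\Gamma(S_i)$.

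Two smaller points. Your parenthetical justification that each $\Gamma_B(S_i)$ is ``a clopen set minus a $\Sigma^0_1$ open set'' is not literally correct: a cylinder $[\Gamma^\nu]$ for a $B$-leaf $\nu$ may overlap $[\Gamma^{\nu'}]$ for a non-$B$ leaf $\nu'$, and subtracting it would delete points of $\Gamma_B(S_i)$. The right observation is that $X\in\Gamma_B(S_i)$ is a finite disjunction, over leaves $\nu$ of $S_i$, of the $\Pi^0_1$ condition ``$\nu\notin B$ and $\Gamma^\nu\prec X$'', so the class is uniformly $\Pi^0_1$ anyway. Finally, your stem move to $\sigma'$ with $j(|\sigma'|)\ge 2j(|\sigma|)-1$ is legitimate for density but unnecessary: the paper keeps the stem and instead, at each stage, first extends $S_i$ $j(l_i)$-bushily inside $T$ to a level $q$ with $j(q)\ge 2j(l_i)$ and only then performs the bit-forcing with $j(l_i)$-big subsets, so that $j'(|\sigma|)=j(|\sigma|)$ and the $j'(|\sigma|)$-smallness of $B\cap S$ above $\sigma$ comes for free without changing the stem.
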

\begin{proof}
	Let $(\sigma, T, B) \in \pP$ and $\Gamma$ be a $\{0, 1\}$-valued functional. Claim~\ref{lem:forcing-totality} allows us to assume that $\Gamma$ is total on $[T]\smallsetminus[B]^\prec$, and since $B$ is c.e., we can assume further that $\Gamma$ is total on $[T]$. Let $j$ be the order function such that $T$ is exactly $j$-bushy above $\sigma$.
	
	The remainder of the proof is a straightforward modification of Theorem \ref{thm:no-Kurtz}. We build an order function $j'$ and an exactly $j'$-bushy tree $S \subseteq T$ above $\sigma$ in stages. Let $S_0$ consist of $\sigma$ and its initial segments. Next, suppose inductively that we have $l_i \in \omega$ and $S_i \subset T$ such that
	\begin{itemize}
		\item for each $x < l_i$, $j'(x)$ has already been defined and $j'(x) \le j(x)$, and
		\item $S_i$ is a finite, regular $j'$-bushy tree of height $l_i$ above $\sigma$.
	\end{itemize}

	Let $\Gamma(S_i)$ denote $\dset{\Gamma^g}{g \in [T] \cap [S_i]^\prec}$.
	
 	We first extend $S_i$ $j(l_i)$-bushily within $T$ to a height $q > l_i$ such that $j(q) \ge 2j(l_i)$, obtaining the tree $S'_{i+1}$. This ensures that every level of $T$ above $q$ is $2j(l_i)$-big above each leaf of $S'_{i+1}$. Now, $\mu(\Gamma(S'_{i+1})) \le \mu(\Gamma(S_i))$. Let $L$ be the set of leaves of $S'_{i+1}$ and let $m = \max\dset{|\Gamma^\rho|}{\rho \in L}$. We choose $l_{i+1}$ large enough so that for every $\tau \in T$ of length $l_{i+1}$, $|\Gamma^\tau| \ge m + 2^{|L|} + 1$. Note that the fact that $T$ is exactly $j$-bushy ensures that we can find $l_{i+1}$ computably. 

For any leaf $\rho$ of $S'_{i+1}$, let $T_\rho$ be the set of strings of length $l_{i+1}$ in $T$ extending $\rho$. If $k$ is a position corresponding to one of the additional bits of convergence (i.e., $m \le k < m + 2^{|L|} + 1$), we say we can force the $k^\tH$ bit to be $c \in \{0, 1\}$ above $\rho$ if $\dset{\tau \in T_\rho}{\Gamma^\tau(k) = c}$ is $j(l_i)$-big above $\rho$. Since $T_\rho$ is $2j(l_i)$-big above $\rho$, if we cannot force the $k^\tH$ bit to be $0$ above $\rho$, we can force it to be $1$. 

As in the proof of Theorem \ref{thm:no-Kurtz}, we obtain positions $r$ and $s$ such that above each leaf of $S'_{i+1}$, the $r^\tH$ and $s^\tH$ bits can be forced in the same way. We adopt the same strategy as before for extending $S'_{i+1}$ to $S_{i+1}$ and ensuring that $\mu(\Gamma(S_{i+1})) \le (3/4)\mu(\Gamma(S_i))$. Finally, we let $j'(x) = j(l_i)$ for $l_i \le x < l_{i+1}$.

	Let $S = \bigcup_{i \in \omega} S_i$. Since $j'(|\sigma|) = j(|\sigma|)$, $B \cap S$ is $j'(|\sigma|)$-small above $\sigma$. So $(\sigma, S, B \cap S) \in \pP$ and since $\mu(\Gamma(S)) = \mu(\bigcap_{i \in \omega} \Gamma(S_i)) = 0$, $(\sigma, S, B \cap S) \in \+H_\Gamma$.
\end{proof}

To conclude the proof of Theorem \ref{thm:slow-no-Kurtz}, let $\+G$ be any filter containing $(\estr, h^{<\omega}, B_\DNC)$ that meets $\+H_\Gamma$ for each functional $\Gamma$ as well as the families of conditions that ensure totality. Then $f_\+G \in \DNC_h$ and does not compute a Kurtz random. \end{proof}

Every hyperimmune degree contains a Kurtz random \cite{Kurtz}, so the function we have constructed is hyperimmune-free. This is, in fact, a feature of the partial order:

\begin{claim}\label{cor:hyperimmune-free}
If $\+G$ is sufficiently generic, then $f_\+G$ has hyperimmune-free degree.
\end{claim}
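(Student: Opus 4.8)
The plan is to prove this with no new dense sets: a filter $\+G$ meeting the totality families $\+D_\Gamma$ of Claim~\ref{lem:forcing-totality} (as does the filter constructed in the proof of Theorem~\ref{thm:slow-no-Kurtz}) already forces $f_\+G$ to have hyperimmune-free degree. I would use the characterization that a degree is hyperimmune-free exactly when every total function Turing-reducible to it is dominated by a computable function, together with the observation that each such function is $\Gamma^{f_\+G}$ for some functional $\Gamma$ with $\Gamma^{f_\+G}$ total. Fix such a $\Gamma$ and take $(\sigma, T, B) \in \+G \cap \+D_\Gamma$, with $T$ exactly $j$-bushy above $\sigma$. Since $f_\+G \in [T] \smallsetminus [B]^\prec$ (as noted in the proof of Theorem~\ref{thm:slow-no-Kurtz}) and $\Gamma^{f_\+G}$ is total, the condition $(\sigma, T, B)$ cannot satisfy the divergence alternative of $\+D_\Gamma$, so it satisfies the first: $\Gamma^g$ is total for every $g \in [T] \smallsetminus [B]^\prec$.

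The next step is to extract a computable function dominating $\Gamma^{f_\+G}$ by compactness, and here the point is that conditions are required to be \emph{exactly} $j$-bushy, so $T$ is finitely branching and $[T]$ is compact. Fix an input $x$ and put $C_x = \dset{\tau \in T}{\Gamma^\tau(x)\converges}$, a c.e.\ set that is upward closed in $T$. Every path through $T$ has an initial segment in $C_x \cup B$: if the path lies in $[B]^\prec$ this is immediate, and otherwise $\Gamma$ converges on it at input $x$ and a sufficiently long initial segment is in $C_x$. Thus $\dset{[\tau]}{\tau \in C_x \cup B}$ is an open cover of the compact set $[T]$. Enumerating $C_x$ and $B$, and tagging each enumerated string according to which of the two sets it is seen to enter, I would computably search for a finite subcover $F_x$: the search terminates by compactness, and whether a finite set of cylinders covers $[T]$ is decidable since $T$ is a computable, finitely branching tree. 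For each $\tau \in F_x$ tagged into $C_x$ we have already computed $\Gamma^\tau(x)$; let $p(x)$ be one more than the maximum of these values (and $0$ if there are no such $\tau$, which in fact cannot happen). Then $p$ is computable, and it dominates $\Gamma^{f_\+G}$: the initial segment $\tau$ of $f_\+G$ lying in $F_x$ cannot be tagged into $B$ (that would put $\tau \in B$, while $f_\+G$ has no initial segment in $B$), so it is tagged into $C_x$, giving $\Gamma^{f_\+G}(x) = \Gamma^\tau(x) < p(x)$. As this works for every such $\Gamma$, $f_\+G$ has hyperimmune-free degree (and is noncomputable, being $\DNC$).

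The only delicate point — and where I expect the work to lie — is arranging that $p$ is genuinely computable despite $B$ being merely c.e.: one cannot decide which strings of $F_x$ belong to $B$, but the search itself records, for each string it throws into the cover, a witness either placing it in $C_x$ (whence $\Gamma$'s value at $x$ is in hand) or in $B$, and genericity forces $f_\+G$'s own initial segment in $F_x$ to be of the first type. A secondary routine check is that $F_x$ has at least one string tagged into $C_x$: otherwise every string of $T$ at the greatest length occurring in $F_x$ would lie in $B$ (using upward-closure of $B$ in $T$ together with the fact that $T$ has no leaves above $\sigma$), and then truncating $T$ at that level would yield a $j(|\sigma|)$-bushy subtree of $T$ above $\sigma$ with all leaves in $B$, contradicting that $B$ is $j(|\sigma|)$-small above $\sigma$. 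I would stress that the availability of compactness — hence the finiteness of $F_x$ — is exactly what insisting on \emph{exact} $j$-bushiness provides; a forcing whose conditions were only required to be $j$-bushy would not support this argument as stated.
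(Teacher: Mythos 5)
Your proposal is correct and follows essentially the same route as the paper: both rest on meeting the totality sets $\+D_\Gamma$, the compactness of $[T]$ guaranteed by exact $j$-bushiness, and the c.e.-ness of $B$ to neutralize oracles that fall into the bad set. The paper packages this by totalizing $\Gamma$ to a functional $\Xi$ (outputting $0$ when the oracle string is seen to enter $B$) and maximizing over the leaves of a finite $j$-bushy subtree of $T$, whereas you maximize over the $C_x$-tagged members of a finite cylinder cover drawn from $C_x \cup B$; these are the same argument with slightly different bookkeeping, and your observation that the initial segment of $f_\+G$ in the cover cannot be $B$-tagged plays exactly the role of the paper's remark that $\Xi$ agrees with $\Gamma$ on $[T]\smallsetminus[B]^\prec$.
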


\begin{proof}
	Suppose $\Gamma^{f_\+G}$ is a total function. Then if $(\sigma, T, B) \in \+G \cap \+D_\Gamma$, it must be the case that $\Gamma$ is total on $[T]\smallsetminus [B]^\prec$. Let $\Xi$ be the functional that on input $x$ and oracle $\tau \in T$, computes $\Gamma^\tau(x)$ until the computation converges or $\tau$ enters $B$. If the latter occurs first, then let $\Xi^\tau(x) = 0$. Now $\Xi$ is total on $[T]$ and agrees with $\Gamma$ on $[T]\smallsetminus [B]^\prec$.
	
	 Let $j$ be the order function such that $T$ is exactly $j$-bushy above $\sigma$. We define a computable function $m$ that majorizes $\Gamma^{f_\+G}$. To compute $m(i)$, search for a finite tree $S_i \subset T$ that is $j$-bushy above $\sigma$ such that for every leaf $\tau$ of $S_i$, $\Xi^\tau(i)\downarrow$. Note that such a finite tree must exist by the compactness of $[T]$ and we can find it computably since $T$ is computable. Now let $m(i)$ be the maximum of the values $\Xi^\tau(i)$ as $\tau$ ranges over the leaves of $S_i$.
	
	Since $T$ is exactly $j$-bushy above $\sigma$ and $S_i$ is a subtree of $T$ that is $j$-bushy above $\sigma$, $[T] \subseteq [S_i]^\prec$. So $f_\+G \in [S_i]^\prec$ and $\Gamma^{f_\+G}(i) = \Xi^{f_\+G}(i) \le m(i)$.
\end{proof}

	Theorem~\ref{thm:slow-no-Kurtz} yields an interesting corollary when combined with the following result:

	\begin{theorem}[Greenberg and Miller \cite{MillerGreenberg}] \label{thm:slowhdim1}
		There is an order function $h$ such that every $\DNC_h$ function computes a real of effective Hausdorff dimension 1.
	\end{theorem}

	There is a $\DNC_h$ function that computes no Kurtz random real, and so we have:

	\begin{corollary}
		There is a real of effective Hausdorff dimension 1 that computes no Kurtz random real.
	\end{corollary}

\section{A $\DNC^X$ function of minimal degree}

In this section, we strengthen Kumabe's result that there is a DNC function of minimal degree. 

\begin{theorem}[Khan]\label{thm:minimal-X-DNC}
	Given any oracle $X$, there is a function that is $\DNC$ relative to $X$ and of minimal degree.
\end{theorem}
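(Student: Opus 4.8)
The plan is to build $f$ by a bushy tree forcing in the style of the proof of Theorem~\ref{thm:slow-no-Kurtz}, but with one essential relaxation: the bad sets attached to conditions are no longer required to be c.e. A \emph{condition} is a triple $(\sigma, T, B)$, where $\sigma \in \bstrings$, $T \subseteq \bstrings$ is a \emph{computable} tree that is exactly $j$-bushy above $\sigma$ for some order function $j$, and $B \subseteq T$ is an \emph{arbitrary} set that is upward-closed in $T$ and $j(|\sigma|)$-small (hence, without loss of generality by Lemma~\ref{lem:small-set-closure}, $j(|\sigma|)$-closed) above $\sigma$; extension is defined exactly as in Theorem~\ref{thm:slow-no-Kurtz}. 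The forcing starts from $(\estr, 3^{<\omega}, B^X_\DNC)$, where $B^X_\DNC$ is the set of strings that are not $\DNC$ relative to $X$; by the relativization of the standard fact that $B_\DNC$ is $2$-small above $\estr$ (any $2$-bushy tree contains a $\DNC$ string), $B^X_\DNC$ is also $2$-small above $\estr$, so this is a condition. For a sufficiently generic filter $\+G$, routine density arguments (extending the stem, and avoiding the bad sets, as in Theorem~\ref{thm:slow-no-Kurtz}) make $f_\+G$ total with no initial segment in any bad set occurring in $\+G$; in particular $f_\+G$ is $\DNC$ relative to $X$, so $f_\+G \not\le_T X$ and $f_\+G$ is not computable.

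The real work is the minimality of $f_\+G$: for each Turing functional $\Phi$ I would prove that the set $\+R_\Phi$ of conditions forcing one of the three alternatives — (i) $\Phi^{f_\+G}$ is partial, (ii) $\Phi^{f_\+G}$ is computable, or (iii) $f_\+G \le_T \Phi^{f_\+G}$ — is dense in $\pP$. Fix a condition $(\sigma, T, B)$ with $T$ exactly $j$-bushy above $\sigma$. As in Claim~\ref{lem:forcing-totality} there is first a totality analysis. If some $\rho \in T \smallsetminus B$ above $\sigma$ and some input $x$ make $\{\tau \in T : \tau \succeq \rho,\ \Phi^\tau(x)\converges\} \cup B$ small above $\rho$, then, passing to a subtree of $T$ above $\rho$ after climbing to a level where the bushiness has absorbed the loss from Lemma~\ref{lem:big-subset}, one adjoins this convergence set to the bad set and reaches alternative (i). Otherwise the relevant convergence sets are big above every relevant node, and I would pass to a \emph{computable} subtree $S \subseteq T$, exactly $j'$-bushy above $\sigma$ for a (possibly slower-growing) order function $j'$, along whose $B$-avoiding paths $\Phi$ is total. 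With totality in hand I would run the bushy analogue of the splitting/non-splitting dichotomy of Kumabe and Kumabe--Lewis \cite{Kumabe, KumabeLewis}: either $\Phi$ fails to ``split $n$-bushily'' above some node $\rho$, in which case one extracts a big subtree above $\rho$ on which a single value of $\Phi$ is big at each input, forcing the value of $\Phi^{f_\+G}$ there and letting it be read off the (computable) subtree — alternative (ii) — or $\Phi$ splits $n$-bushily everywhere above $\sigma$, in which case one builds a computable $\Phi$-splitting subtree (incomparable nodes receiving incomparable $\Phi$-images), forces $f_\+G$ onto it, and recovers $f_\+G$ from the total function $\Phi^{f_\+G}$ by the usual search down a splitting tree — alternative (iii). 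Meeting every $\+R_\Phi$ along with the basic density sets gives the theorem.

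I expect the main obstacle to be carrying out the two subtree constructions above — the ``total'' branch of the totality analysis and the ``non-splitting'' branch — \emph{computably}, now that $B$ is not c.e.; this is precisely where the c.e.-ness used in Theorem~\ref{thm:slow-no-Kurtz} is lost and must be replaced. The device that should make it work is that $B$ is upward-closed \emph{within $T$}: once a node $\tau$ enters $B$, all of $T$ above $\tau$ already lies in $[B]^\prec$, so a path contributing to $[T]\smallsetminus[B]^\prec$ never passes through such a $\tau$, and consequently the subtree construction need never decide membership in $B$. One searches only for the positive, c.e. events ($\Phi^\tau(x)\converges$, or a $\Phi$-split among extensions of $\tau$), dovetailing these searches along the tree; a branch on which the relevant search stalls forever is, by the case hypothesis, one that has already entered $B$ and may be abandoned. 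Turning this into a genuinely computable, sufficiently bushy subtree — and checking that the smallness bookkeeping survives, each adjunction to a bad set costing bushiness via Lemma~\ref{lem:big-subset}, a cost absorbed by climbing the tree since $j$ is an order function and $B$ is $j(|\sigma|)$-closed — is where the ``more complicated combinatorics'' promised in the introduction lives. A companion difficulty is isolating the correct notion of ``$\Phi$ splits $n$-bushily'': strong enough that its failure forces computability of $\Phi^{f_\+G}$, yet weak enough to be forced on a subtree that is still bushy for some order function. This is the combinatorial core of Kumabe's minimal-degree construction \cite{Kumabe}, and it must be reworked to cooperate with a non-effective bad set rather than a c.e. one.
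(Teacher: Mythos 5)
Your overall architecture (three-way density: force $\Phi^{f_\+G}$ partial, or computable, or $f_\+G \le_T \Phi^{f_\+G}$, with non-c.e.\ bad sets seeded by $B^X_\DNC$) is the right shape, and you correctly locate the two pressure points. But the condition format you fix at the outset --- a \emph{total} computable tree $T$, exactly $j$-bushy, with a bad set that is $j(|\sigma|)$-small for a \emph{constant} --- cannot support the construction, and your own rescue device gives this away. When a search above a leaf stalls (for convergence, or for a splitting), you propose to ``abandon'' that branch; but you cannot decide which searches stall, so what you actually build is a tree with a noncomputable domain, i.e.\ a partial recursive tree with terminal nodes, which contradicts your requirement that conditions carry total computable trees. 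The paper embraces exactly this: conditions are $(\sigma, T, B, h_T, h_B)$ with $T$ partial recursive and its terminal nodes folded into $B$. Moreover this is not a cosmetic choice: by Theorem~\ref{thm:ZPDNRhyperimmune}, when $X \ge_T \emptyset'$ every $\DNC^X$ function is hyperimmune, so no construction in the Kumabe--Lewis mold that yields hyperimmune-free generics (which is what total computable exactly-bushy trees naturally give) can prove the theorem; the noncomputable domains are forced on you.

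The second, independent gap is the constant-bushiness bookkeeping. Because $B$ is invisible, at each stage of the splitting-tree construction you must find pairwise $\Gamma$-splittings above \emph{all} children of a node simultaneously (you cannot wait to see which ones are bad, as Kumabe--Lewis can with a c.e.\ bad set), dumping the children above which the search fails into the bad set. The number of children at level $l$ grows with $l$, and the bushiness cost of producing $k+1$ pairwise splittings is a factor of order $2^{3+3k}$ (Claim~\ref{clm:splitting-ratio}); with a fixed tree bushiness $n$ and a fixed smallness bound $k$ for the bad set this cost is eventually unpayable, so Lemma~\ref{lem:big-subset} alone, ``absorbed by climbing the tree,'' does not suffice. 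The paper's solution is precisely the quantitative machinery you are missing: both the tree and the bad set are measured by order functions $h_T, h_B$ whose ratio grows like $(r(h_T,l_i))^i$ at designated splitting levels (the ``allows splitting'' relation, with $r(g,l)=2^{3+3w(g,l)}$ tied to the width of the tree), the middle function $\+M(h_T,h_B)$ mediates the case analysis, Lemma~\ref{lem:splitting} is restated for varying bushiness, and Lemma~\ref{lem:smallness-preservation} is what lets the abandoned leaves accumulate across infinitely many levels while the bad set stays $h_B$-small (ending up $2h_B$-small). So while your proposal identifies the right obstacles, it does not supply the ideas that overcome them, and as stated (total computable trees, constant smallness, start condition $3^{<\omega}$) the minimality step would fail.
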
 
	
	Kumabe and Lewis \cite{KumabeLewis} provided a simplified version of Kumabe's original arguments \cite{Kumabe}. Our proof reuses much of the combinatorial machinery developed in their proof, but differs in several key aspects. Kumabe and Lewis use partial trees with computable domains, hence the function they produce is hyperimmune-free. We use partial trees with noncomputable domains, out of necessity: by Theorem~\ref{thm:ZPDNRhyperimmune}, any $\DNC$ function relative to $0'$ is hyperimmune. Further, it suffices in the Kumabe-Lewis construction to work with bad sets of constant bushiness. This is not the case here; our bad sets are $h$-small for some order function $h$. In our approach to bad sets of varying bushiness, we use ideas from Cai and Greenberg's result in \cite{CaiThesis} that there exist degrees $\deg{a}$ and $\deg{b}$ such that $\deg{a}$ is minimal and $\DNC$ and $\deg{b}$ is $\DNC$ relative to $\deg{a}$ and a strong minimal cover of $\deg{a}$.

\subsection{Definitions and notation}
\begin{definition}
	Let $h$ be an order function. Given $\sigma \in \bstrings$, we say that a set $B \subseteq \bstrings$ is \emph{$h$-big above $\sigma$} if there is a finite $h$-bushy tree $T$ above $\sigma$ such that all its leaves are in $B$. If $B$ is not $h$-big above $\sigma$ then we say that $B$ is \emph{$h$-small} above $\sigma$.
\end{definition}

It is easy to see that the smallness preservation property, concatenation property and small set closure property all continue to hold when one replaces the constants governing bushiness with order functions.

For an order function $g$ and $l \in \omega$, let $w_g(l)$ denote $\prod_{i < l} g(i)$ and let $r(g, l)$ denote $2^{3 + 3w_g(l)}$.

In order to simplify our calculations, throughout this proof we restrict ourselves to order functions that only take values that are powers of two.

\begin{definition}
	Suppose $h(n) = 2^{h'(n)}$ and $g(n) = 2^{g'(n)}$ are order functions, where $h', g' : \omega \rightarrow \omega$. The \emph{middle} of $h$ and $g$ is the order function $\+M(h, g)$ defined by \[\+M(h, g)(n) = 2^{\floor{\frac{h'(n) + g'(n)}{2}}}.\]
\end{definition}

\begin{definition}
	Suppose $h$ and $g$ are order functions. We say the pair $(h, g)$ \emph{allows splitting above $N \in \omega$} if 
	\begin{enumerate}
		\item $h(N) \ge g(N)$,
		\item for $n \ge N$, $h(n)/g(n)$ is nondecreasing, and
		\item there is an increasing sequence $\pair{l_i}_{i \in \omega}$ of natural numbers with $l_0 \ge N$ such that $h(l_i)/g(l_i) \ge (r(h, l_i))^i$.
	\end{enumerate}
	We say $(h, g)$ allows splitting if it allows splitting above some $N \in \omega$. We call the sequence $\pair{l_i}$ the \emph{splitting levels} for $(h, g)$.
\end{definition}

\begin{lemma}
	Let $h$ and $g$ be order functions such that $(h, g)$ allows splitting. Let $m = \+M(h, g)$. Then $(m, g)$ and $(h, m)$ allow splitting.
\end{lemma}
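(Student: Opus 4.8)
The plan is to unpack the definition of ``allows splitting'' for $(m,g)$ and $(h,m)$ and verify the three conditions directly, reusing the splitting levels $\pair{l_i}$ for $(h,g)$ (possibly after passing to a sublist). Write $h = 2^{h'}$, $g = 2^{g'}$, so $m = \+M(h,g) = 2^{\floor{(h'+g')/2}}$; set $m' = \floor{(h'+g')/2}$. First I would observe that for $n \ge N$ we have $g'(n) \le m'(n) \le h'(n)$, so $g(n) \le m(n) \le h(n)$; in particular condition (1) holds for both $(m,g)$ and $(h,m)$ above $N$. For condition (2), note $m'(n) - g'(n) = \floor{(h'(n)-g'(n))/2}$ and $h'(n) - m'(n) = \lceil (h'(n)-g'(n))/2 \rceil$; since $h(n)/g(n) = 2^{h'(n)-g'(n)}$ is nondecreasing in $n$ (for $n \ge N$), so is $h'(n)-g'(n)$, hence so are its floor-half and ceiling-half, giving that $m(n)/g(n)$ and $h(n)/m(n)$ are nondecreasing.

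The main work is condition (3): exhibiting an increasing sequence of levels along which the ratio $m(l)/g(l)$ (resp.\ $h(l)/m(l)$) grows at least like $(r(\cdot,l))^i$. The key inequality is that $\max\{m'(n)-g'(n),\, h'(n)-m'(n)\} \ge \lceil (h'(n)-g'(n))/2 \rceil \ge (h'(n)-g'(n))/2$, i.e.\ at each level $n$ the ``bigger half'' of the ratio $h/g$ is at least the square root of $h/g$: $\max\{m(n)/g(n),\, h(n)/m(n)\} \ge \sqrt{h(n)/g(n)}$. Along the splitting levels we have $h(l_i)/g(l_i) \ge (r(h,l_i))^i$, so $\max\{m(l_i)/g(l_i),\, h(l_i)/m(l_i)\} \ge (r(h,l_i))^{i/2}$. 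The slight annoyance is that which of the two halves is the big one may depend on $i$, and that we need the bound stated with $r(m,\cdot)$ (resp.\ $r(h,\cdot)$) rather than $r(h,\cdot)$, and with exponent $i$ rather than $i/2$. I would handle both points by thinning the sequence: since $m \le h$, we have $w(m,l) \le w(h,l)$ and hence $r(m,l) \le r(h,l)$ (and trivially $r(h,l) = r(h,l)$), so it suffices to get the bound with $r(h,l_i)$ on the right. To fix the exponent, pass to the subsequence $\pair{l_{2i}}$ (or $\pair{l_{k_i}}$ with $k_i$ growing fast enough): along it, $h(l_{2i})/g(l_{2i}) \ge (r(h,l_{2i}))^{2i}$, so the bigger of $m/g$, $h/m$ at level $l_{2i}$ is at least $(r(h,l_{2i}))^{i}$. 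Finally, since for each fixed pair ($(m,g)$ or $(h,m)$) monotonicity of the ratio (condition (2), already established) forces the ``bigger half'' to be the same one at \emph{all} sufficiently large levels — the function $n \mapsto (m'(n)-g'(n)) - (h'(n)-m'(n))$ is monotone, being a difference that tracks the parity/rounding of the monotone quantity $h'-g'$, so it is eventually of constant sign — one of the two pairs $(m,g)$, $(h,m)$ has its half winning cofinally often, and by the monotonicity just noted that pair in fact has it winning at every large level; restricting $\pair{l_{2i}}$ to levels past that point gives splitting levels for that pair. For the other pair one argues symmetrically, or simply notes that whenever the eventually-constant sign is (say) nonnegative we get splitting levels for $(m,g)$ and also, since then $h'(n)-m'(n) = \lfloor (h'(n)-g'(n))/2 \rfloor$ still tends to infinity (as $h'-g'$ is unbounded by condition (3) for $(h,g)$), a cofinal set of levels where $h(n)/m(n) \ge (r(h,n))^{j}$ for any prescribed $j$, yielding splitting levels for $(h,m)$ as well.

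The step I expect to be the real obstacle is the bookkeeping in condition (3): making sure the exponent ``$i$'' and the base ``$r(m,l_i)$'' are exactly what the definition demands rather than something off by a constant factor or an asymptotically negligible amount. The clean way to discharge it is to note that the definition of ``allows splitting'' is robust under (a) replacing $\pair{l_i}$ by any infinite subsequence (shrinking the available bound) and (b) using $r(h,l)$ in place of $r(m,l)$ since $r(m,l) \le r(h,l)$ when $m \le h$; with those two observations the inequality $\max\{m/g, h/m\} \ge \sqrt{h/g}$ along the original splitting levels, combined with the eventual-constancy of which half wins (forced by condition (2)), immediately produces splitting levels for whichever of $(m,g)$, $(h,m)$ needs them, and a parallel (easier) estimate handles the other. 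No genuinely hard combinatorics is involved — it is all monotonicity and the elementary fact that the larger of two numbers summing to $t$ is at least $t/2$.
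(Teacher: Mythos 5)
Your main line is exactly the paper's argument: conditions (1) and (2) are immediate from $g'\le \floor{(h'+g')/2}\le h'$ and from the fact that $m'(n)-g'(n)=\floor{(h'(n)-g'(n))/2}$ and $h'(n)-m'(n)=\lceil (h'(n)-g'(n))/2\rceil$ are nondecreasing, and condition (3) comes from the square-root estimate along the given splitting levels, thinned to a subsequence like $\pair{l_{2i}}$, together with $r(m,l)\le r(h,l)$. That is the published computation.

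The detour about ``which half wins'' is where you go wrong, in two places. First, the supporting claim is false: $(m'(n)-g'(n))-(h'(n)-m'(n))=\floor{(h'(n)-g'(n))/2}-\lceil (h'(n)-g'(n))/2\rceil$ equals $0$ or $-1$ according to the parity of $h'(n)-g'(n)$, and the parity of a nondecreasing integer-valued function can oscillate forever, so this difference is not monotone for the reason you give (it does have constant sign, but only because the ceiling always dominates the floor, i.e.\ the larger half is always $h/m$). Second, your fallback for the pair $(h,m)$ --- that $h'-m'$ ``tends to infinity'' --- does not by itself yield bounds of the form $(r(h,n))^{j}$, since $r(h,n)=2^{3+3w(h,n)}$ grows much faster than any consequence of mere unboundedness. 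Both problems evaporate once you notice that the case analysis is unnecessary: $m(n)/g(n)=2^{\floor{(h'(n)-g'(n))/2}}\ge \tfrac12\sqrt{h(n)/g(n)}$ and $h(n)/m(n)=2^{\lceil (h'(n)-g'(n))/2\rceil}\ge \sqrt{h(n)/g(n)}$, so \emph{both} halves meet the square-root bound up to a factor of $2$, and that factor is absorbed since $r(h,l)\ge 2$: at level $l_{2i+2}$ one gets $m/g\ge (r(h,l_{2i+2}))^{i+1}/2\ge (r(m,l_{2i+2}))^{i}$, and similarly $h/m\ge (r(h,l_{2i+2}))^{i}$, which is precisely how the paper concludes for $(m,g)$ and, by the ``similar calculation,'' for $(h,m)$.
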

\begin{proof}
	We provide the argument for $(m, g)$. Suppose $(h, g)$ allows splitting above $N$ and $\pair{l_i}$ is the sequence of splitting levels for $(h, g)$. Note that conditions (1) and (2) in the definition above are satisfied by $(m, g)$ above $N$.
	
	We verify condition (3). Suppose that $h(n) = 2^{h'(n)}$ and $g(n) = 2^{g'(n)}$. Note first that for each $n \ge N$, 
	\[\frac{m(n)}{g(n)} = 2^{\floor{\frac{h'(n) + g'(n)}{2}}-g'(n)} = 2^{\floor{\frac{h'(n) - g'(n)}{2}}} \ge \frac{2^{\frac{h'(n) - g'(n)}{2}}}{2}.\]
	
	It follows that for each $i \in \omega$, \[\frac{m(l_{2i + 2})}{g(l_{2i + 2})} \ge \frac{2^{\frac{h'(l_{2i + 2}) - g'(l_{2i + 2})}{2}}}{2} \ge \frac{(r(h, l_{2i + 2}))^{\frac{2i + 2}{2}}}{2} = \frac{(r(h, l_{2i + 2}))^{i + 1}}{2} \ge (r(m, l_{2i + 2}))^i,\] so $\pair{l_{2i + 2}}_{i \in \omega}$ is a sequence of splitting levels for $(m, g)$.
	
	A similar calculation shows that $(h, m)$ allows splitting.
\end{proof}
	It is not hard to verify that if $(h, g)$ allows splitting then for any $c \in \omega$, so do $(h, 2^c g)$ and $(\max(h/2^c, 2), g)$.

\subsection{The partial order}

The forcing conditions are of the form $(\sigma, T, B, h_T, h_B)$, where
\begin{itemize}
	\item the tree $T$ is partial computable (some nodes may be terminal) and exactly $h_T$-bushy above $\sigma$,
	\item $B$ includes the terminal nodes in $T$, is upward closed and is $h_B$-small above $\sigma$, and
	\item $(h_T, h_B)$ allows splitting above $|\sigma|$.
\end{itemize}

Only $\sigma$, $T$ and $B$ contribute to the ordering. Let $h_M$ denote $\+M(h_T, h_B)$. By extending $\sigma$ appropriately, we can assume that $h_M(n)/16 \ge h_B(n)$ for all $n \ge |\sigma|$. 

Note that we have no access to the set $B$ (it is not c.e.). Since the terminal nodes of $T$ are contained in the bad set $B$, the conditions that force $f_\+G$ to be total are dense in this partial order.

As before, we can assume that the bad set is $h_B$-closed. In other words, if $\tau$ is any string in $T \setminus B$ then $B$ is $h_B$-small above $\tau$.

\subsection{Forcing $\Gamma^{f_\+G}$ to be partial}

Let $C_n = \dset{\tau \in T}{\Gamma^\tau(n) \converges}$. Given a condition $(\sigma, T, B, h_T, h_B)$ and a functional $\Gamma$ we say we can \emph{force $\Gamma^{f_\+G}$ to be partial} if there is a $\tau$ on $T$ extending $\sigma$ and an $n$ such that the set $C_n \cup B$ is $h_M$-small above $\tau$.
If this is the case, then we let $T'$ be the full subtree of $T$ above $\tau$. The condition $(\tau, T', C_n \cup B, h_T, h_M)$ extends $(\sigma, T, B, h_T, h_B)$, while forcing $\Gamma^{f_\+G}(n)\diverges$. From now on we assume that we cannot force $\Gamma^{f_\+G}$ to be partial. It follows that for every $n$, and every $\tau \in T \setminus B$, $C_n \setminus B$ is $h_M/2$-big above $\tau$. Applying this fact iteratively we obtain the following claim:

\begin{claim}\label{clm:convergence}
	For any $\tau \in T \setminus B$ extending $\sigma$ and any $n$, there is an $A \subset T \setminus B$, $h_M/2$-big above $\tau$, such that for every $\rho \in A$, $\Gamma^\rho \res n$ is defined.
\end{claim}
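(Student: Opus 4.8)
The plan is to prove the claim by a straightforward induction on $n$, iterating the observation recorded just before the statement (that under our standing assumption $C_n \setminus B$ is $h_M/2$-big above every $\tau \in T \setminus B$) and gluing the resulting finite bushy trees together with the concatenation property, Lemma~\ref{lem:concatenation}, in its order-function form.

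For the base case $n = 0$, note that $\Gamma^\rho \res 0$ is the empty string, hence defined for every $\rho$, so the degenerate tree consisting of $\tau$ alone witnesses that $A = \{\tau\}$ is $h_M/2$-big above $\tau$, and $\{\tau\} \subseteq T \setminus B$ by hypothesis. For the inductive step, suppose we have $A_n \subseteq T \setminus B$ that is $h_M/2$-big above $\tau$ — say $S_n$ is a finite $h_M/2$-bushy tree above $\tau$ whose leaves are exactly $A_n$ — with $\Gamma^\rho \res n$ defined for every $\rho \in A_n$. For each leaf $\rho \in A_n$ we have $\rho \in T \setminus B$, so by the standing assumption $C_n \setminus B$ is $h_M/2$-big above $\rho$; fix a finite $h_M/2$-bushy tree $U_\rho$ above $\rho$ with all leaves in $C_n \setminus B$. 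Grafting each $U_\rho$ onto the corresponding leaf of $S_n$ yields, by the concatenation property, a finite $h_M/2$-bushy tree $S_{n+1}$ above $\tau$; let $A_{n+1}$ be its set of leaves. Then $A_{n+1} \subseteq C_n \setminus B \subseteq T \setminus B$. Each $\rho' \in A_{n+1}$ extends some $\rho \in A_n$, so $\Gamma^{\rho'} \res n$ is still defined (oracle computations are monotone under extension of the oracle), and $\rho' \in C_n$ gives $\Gamma^{\rho'}(n)\converges$, so $\Gamma^{\rho'} \res (n+1)$ is defined. Taking $A = A_n$ for the given $n$ finishes the induction.

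I do not expect a genuine obstacle here — the substantive work (ensuring $C_n \setminus B$ is big) is exactly the standing hypothesis that $\Gamma^{f_\+G}$ cannot be forced partial. The only points requiring a little care are: (i) observing that $C_n \setminus B \subseteq T \setminus B$, so that the inductive invariant $A \subseteq T \setminus B$ is preserved and the standing assumption remains applicable at the newly produced leaves; and (ii) invoking monotonicity of $\Gamma$ in its oracle so that the bits of convergence obtained at earlier stages are not lost when the oracle is extended.
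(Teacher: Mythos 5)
Your proof is correct and is exactly the argument the paper intends: the paper simply says the claim follows by "applying iteratively" the fact that $C_n \setminus B$ is $h_M/2$-big above every $\tau \in T \setminus B$, and your induction with the concatenation property (plus monotonicity of oracle computations) is precisely that iteration spelled out.
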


\subsection{Forcing $\Gamma^{f_\+G}$ to be computable}
	It is worth pointing out here how our argument for this case of the forcing differs from the one in Kumabe-Lewis. As we have mentioned, the bad sets in their argument are c.e., and they make strong use of this fact in an \emph{effective} simultaneous construction of a refined subtree and a real $Y$ that it is the image of $\Gamma$ on every path on this subtree (and hence computable). We do not have access to the bad set, since we ultimately want it to include the set of strings that are non-DNC relative to $X$. So we construct a sufficiently bushy subtree \emph{noneffectively}, and let $Y$ be the image under $\Gamma$ of this tree. Under the assumptions we make in this case of the forcing, $Y$ turns out to be computable. 

\begin{definition}
	Let $g$ be an order function. A \emph{$g$-big splitting above $\tau \in T$} is a pair of sets $A_0 \subset T$ and $A_1 \subset T$, both $g$-big above $\tau$, such that for any $\tau_0 \in A_0$ and $\tau_1 \in A_1$, $\Gamma^{\tau_0} \mid \Gamma^{\tau_1}$. We say that $A_0$ and $A_1$ are \emph{$\Gamma$-splitting}.
\end{definition}

Suppose that there is a $\tau \in T \setminus B$ extending $\sigma$ such that we cannot find any $h_M/16$-big splitting above $\tau$. Under this assumption, we construct a real $Y$ with the property that for each $n \in \omega$, the set of $\rho$ on $T$ such that $\Gamma^\rho \res n = Y \res n$ is $h_M/4$-big above $\tau$. It follows immediately that $Y$ is computable. To compute it up to $n$ bits, we search for an $h_M/4$-bushy tree $A \subset T$ above $\tau$ every leaf of which gives the same $n$ bits of convergence via $\Gamma$. These bits must agree with $Y$, otherwise we will have obtained an $h_M/16$-big splitting above $\tau$. Further, if we let $D = \dset{\rho \in T}{\Gamma^\rho \mid Y}$, then $D$ is $h_M/16$-small above $\tau$. It follows that $B \cup D$ is $h_M$-small above $\tau$, so letting $T'$ be the full tree above $\tau$, the condition $(\tau, T', B \cup D, h_T, h_M)$ extends $(\sigma, T, B, h_T, h_B)$ while forcing $\Gamma^{f_\+G}$ to be computable.

We construct $Y$ bit by bit, letting $Y_0 = \Gamma^\tau$. We also assume inductively that there is a set $S_i \subset T \setminus B$ that is $h_M/4$-big above $\tau$ and for every $\rho \in S_i$, $\Gamma^\rho \upto i + |Y_0| = Y_i$. Let $S_0$ consist of just $\tau$.

Given $Y_i$ and $S_i$, we proceed as follows. Above each leaf $\rho$ of $S_i$, there is an $h_M/2$-big set of strings $A_\rho$ such that for each $\nu \in A_\rho$, $\Gamma^\nu(|Y_i|)$ is defined. $A_\rho$ can then be thinned out to a set $A'_\rho$ that is $h_M/4$-big above $\rho$ and such that for each $\nu \in A'_\rho$, $\Gamma^\nu(|Y_i|)$ converges to the same value $c_\rho$. Next, since $S_i$ is $h_M/4$-big above $\tau$, there is a $V \subset S_i$, $h_M/8$-big above $\tau$, such that for each $\rho \in V$, $c_\rho$ is the same value, say $j$. Let $Y_{i+1} = Y_i j$. Note that $V' = \cup \dset{A'_\rho}{\rho \in V}$ is $h_M/8$-big above $\tau$ and for each $\nu \in V'$, $\Gamma^{\nu} \succeq Y_{i+1}$. Let $S_{i+1} = \dset{\nu \in C}{\Gamma^\nu \succeq Y_{i+1}}$. The set $C \setminus S_{i+1}$ must be $h_M/16$-small above $\tau$, otherwise $C \setminus S_{i+1}$ and $V'$ form an $h_M/16$-big splitting above $\tau$. It follows that $S_{i+1}$ is $h_M/4$-big above $\tau$.
 
\subsection{Forcing $\Gamma^{f_\+G} \ge_T f_\+G$}

We work now under the additional assumption that for each $\tau \in T \setminus B$ extending $\sigma$ there is a $h_M/16$-big splitting above $\tau$.

We refine $T$ to a subtree $S$ that has the \emph{delayed splitting property}: above each $\tau \in S \setminus B$, there are levels $l' > l > |\tau|$ such that if $\rho_0$ and $\rho_1$ are any two extensions of $\tau$ on $S$ of length $l$, and $\rho'_0 \succ \rho_0$ and $\rho'_1 \succ \rho_1$ are extensions on $S$ of length $l'$, then $\Gamma^{\rho'_0} \mid \Gamma^{\rho'_1}$. 

The statement of the following lemma has been slightly modified from the original in order to apply to trees of varying bushiness:

\begin{lemma}[Kumabe, Lewis \cite{KumabeLewis}] \label{lem:splitting}
	Let $\Gamma$ be a functional. Let $A$ be $4g$-big above $\alpha$ and $B$ be $4h$-big above $\beta$, where $g$ and $h$ are order functions. Suppose that above every $\tau \in A$, there exist $\Delta_{\tau, 0}$ and $\Delta_{\tau, 1}$, such that they are both $4g$-big above $\tau$ and are $\Gamma$-splitting. Let $A' = \cup_{\tau, i} \Delta_{\tau, i}$ and let $v = \max \dset{|\Gamma^\rho|}{\rho \in A'}$. If for every leaf $\sigma$ of $B$, $|\Gamma^\sigma| > v$, then there is an $A'' \subseteq A'$ and a $B' \subseteq B$, $g$-big above $\alpha$ and $h$-big above $\beta$ respectively, that are $\Gamma$-splitting. 	
\end{lemma}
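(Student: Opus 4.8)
The plan is to amplify the single splittings $\Delta_{\tau,0}, \Delta_{\tau,1}$ above the leaves of $A$ into a $\Gamma$-splitting pair that works against \emph{all} of $B$ simultaneously, and then pay for the bushiness lost in the amplification using the factor-of-$4$ slack we were handed. The key combinatorial idea is that $v$, the maximal use-length $\max\{|\Gamma^\rho| : \rho \in A'\}$, is finite, and that every leaf $\sigma$ of $B$ already has $|\Gamma^\sigma| > v$, so $\Gamma^\sigma$ determines which side of each splitting $(\Delta_{\tau,0}, \Delta_{\tau,1})$ it is incompatible with. First I would fix any leaf $\tau$ of $A$. For each leaf $\sigma$ of $B$, since $\Gamma^{\tau_0} \mid \Gamma^{\tau_1}$ for $\tau_i \in \Delta_{\tau,i}$ and $|\Gamma^\sigma| > v \ge |\Gamma^{\tau_i}|$, the string $\Gamma^\sigma$ (restricted to length $v$) cannot be compatible with both $\Gamma^{\tau_0}$ and $\Gamma^{\tau_1}$ for fixed $\tau_0,\tau_1$; more carefully, $\Gamma^\sigma$ extends a string that disagrees with at least one of $\Gamma^{\tau_0},\Gamma^{\tau_1}$. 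This lets me assign to each leaf $\sigma$ of $B$ a ``side'' $i(\sigma,\tau) \in \{0,1\}$ such that $\Gamma^\sigma \mid \Gamma^\nu$ for every $\nu \in \Delta_{\tau, i(\sigma,\tau)}$.

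Next I would run a labelling/pigeonhole argument up the tree $B$, in the style of the smallness preservation property (Lemma~\ref{lem:big-subset}): starting from the side-labels at the leaves of $B$, propagate a label down through $B$ so that we extract a $2h$-big (or $h$-big, after accounting for the binary branching) subtree $B' \subseteq B$ whose leaves all agree on a single side $i^*$ relative to the chosen $\tau$ — here the ``$4$'' in ``$4h$-big above $\beta$'' is what makes the binary pigeonholing go through down to an $h$-big tree. Actually, since the side depends on $\tau$ as well, the right order is to do this uniformly: for each leaf $\tau$ of $A$ I would pigeonhole over $B$ to get a side $i^*(\tau)$ and a subtree $B_\tau \subseteq B$ that is $2h$-big above $\beta$ and entirely on side $i^*(\tau)$ relative to $\Delta_{\tau, i^*(\tau)}$. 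Then pigeonhole \emph{again}, now over the leaves of $A$ (binary branching at each node of $A$ into ``$i^*(\tau) = 0$'' versus ``$i^*(\tau) = 1$''), to harvest an $A''$-candidate $\subseteq \cup_\tau \Delta_{\tau, i^*(\tau)}$ that is $2g$-big above $\alpha$ together with a choice of side that is constant. This second pigeonhole costs another factor of $2$ in the bushiness of $A$, which is why we needed $4g$ rather than $2g$. Finally, intersect the surviving $B_\tau$'s — but to avoid the intersection collapsing, I would instead be careful to run the $B$-pigeonhole \emph{once}, simultaneously recording the whole vector $(i^*(\tau))_{\tau}$, exploiting that there are only finitely many leaves $\tau$ of $A$ and hence finitely many possible vectors; by pigeonhole over $B$ (a $4h$-bushy tree), some single vector survives on an $h$-big subtree $B'$. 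Then on the $A$ side, that vector picks out one side per leaf $\tau$ of $A$, giving $A'' = \cup_\tau \Delta_{\tau, (\text{vector})(\tau)}$, which is $g$-big above $\alpha$ by the concatenation property (Lemma~\ref{lem:concatenation}), since each $\Delta_{\tau,i}$ was $4g \ge g$-big above $\tau$ and $A$ was $4g \ge g$-big above $\alpha$. By construction every $\rho'' \in A''$ and $\sigma' \in B'$ satisfy $\Gamma^{\rho''} \mid \Gamma^{\sigma'}$, so $A''$ and $B'$ are $\Gamma$-splitting, as required.

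The main obstacle I anticipate is getting the bookkeeping of the two pigeonhole stages to interact correctly — in particular making sure the ``side'' function is well-defined (i.e.\ that $|\Gamma^\sigma| > v$ genuinely forces $\Gamma^\sigma$ onto exactly one side of each splitting, rather than possibly being incompatible with both, which would actually be fine, or compatible with one side's image as a \emph{prefix}, which needs the strict inequality $|\Gamma^\sigma| > v$ to rule out), and making sure the pigeonhole over the finitely many leaves of $A$ is what consumes the second factor of $2$ while the pigeonhole over $B$ consumes the factors associated to $h$. The cleanest route is probably: (1) observe $|\Gamma^\sigma| > v$ implies for each leaf $\tau$ of $A$ there is a side $i(\sigma,\tau)$ with $\Gamma^\sigma \mid \Gamma^\nu$ for all $\nu\in\Delta_{\tau,i(\sigma,\tau)}$; (2) the map $\sigma \mapsto (i(\sigma,\tau))_{\tau}$ has finite range, so a pigeonhole/labelling argument on the $4h$-bushy tree $B$ yields an $h$-big $B'$ on which this map is constant, equal to some vector $\vec{i}$; (3) set $A'' = \bigcup_{\tau} \Delta_{\tau, \vec{i}(\tau)}$ and verify via Lemma~\ref{lem:concatenation} that $A''$ is $g$-big above $\alpha$ (using $4g$-bigness of $A$ and of each $\Delta_{\tau,i}$ — in fact only $g$-bigness is needed for this direction, so the extra slack on the $A$-side is genuinely spent by whatever additional refinement the surrounding construction demands, or can simply be noted as unused here); (4) conclude $A'', B'$ are $\Gamma$-splitting. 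I would double-check whether the ``$4$''s are all truly necessary or whether the statement is stated with room to spare; either way the proof goes through with the stated hypotheses.
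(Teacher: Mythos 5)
Your opening observation is correct and nicely isolated: since $|\Gamma^\sigma| > v \ge |\Gamma^\nu|$ for every $\nu \in A'$, a leaf $\sigma$ of $B$ cannot have $\Gamma^\sigma$ compatible with images from both $\Delta_{\tau,0}$ and $\Delta_{\tau,1}$ (two compatible strings shorter than $\Gamma^\sigma$ would both be initial segments of it, hence comparable), so a side $i(\sigma,\tau)$ with $\Gamma^\sigma \mid \Gamma^\nu$ for all $\nu \in \Delta_{\tau,i(\sigma,\tau)}$ is well defined. The gap is in step (2): you color each leaf $\sigma$ of $B$ by the \emph{entire vector} $(i(\sigma,\tau))_\tau$, which takes values in $\{0,1\}^{|L|}$ where $L$ is the set of leaves of $A$, and you claim that the $4h$-bigness of $B$ yields an $h$-big monochromatic $B'$. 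Smallness preservation does not give this: if all $2^{|L|}$ color classes were $h$-small, their union would only be forced to be roughly $2^{|L|}(h-1)+1$-small, which is perfectly consistent with $B$ being $4h$-big once $|L| \ge 3$. Equivalently, handling the coordinates one at a time costs a factor of $2$ per leaf of $A$, so your argument needs $B$ to be about $2^{|L|}h$-big, not $4h$-big; in the intended application $|L|$ is large (it grows with the bushiness of $A$), so the constant-factor slack in the hypothesis cannot pay for this pigeonhole. (Your side remark that choosing one $\Delta_{\tau,i}$ per leaf costs a factor of $2$ on the $A$ side is also off --- by concatenation that choice is free --- but the fatal quantitative problem is on the $B$ side.)

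The paper's proof avoids any exponential pigeonhole by never trying to fix a side per leaf of $A$ against all of $B$ at once. Instead it tracks a candidate common image prefix $\sigma_s$ together with a big $B_s \subseteq B$ whose images all extend $\sigma_s$, and at each stage splits $A'$ three ways relative to $\sigma_s$: if $\dset{\tau \in A'}{\Gamma^\tau \mid \sigma_s}$ is $g$-big, it pairs with $B_s$; if the images $\preceq \sigma_s$ form a $g$-big set, then above each relevant leaf $\tau$ of $A$ those strings lie in a single $\Delta_{\tau,i}$ (their images are comparable), and the union of the opposite halves is $g$-big and splits with $B_s$; otherwise the images properly extending $\sigma_s$ are $g$-big, forcing $|\sigma_s| < v$, and one either finds an $h$-big incompatible set inside $B$ or extends $\sigma_s$ by one bit, shrinking $B_s$ by a bounded factor. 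Every pigeonhole is $2$- or $3$-way, which is exactly what the factor $4$ supports, and the process terminates because $|\sigma_s|$ cannot pass $v$ while the leaves of $B$ have images of length $> v$. If you want to salvage your outline, you would have to replace the global vector coloring by some such sequential scheme; as written, the step producing $B'$ does not go through.
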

\begin{proof}
	Let $\sigma_0 = \estr$ and $B_0 = B$.
	
	Assume inductively that we have $\sigma_s$ of length $s$ and $B_s$, $h$-big above $\beta$, such that for all $\rho \in B_s$, $\Gamma^\rho \succeq \sigma_s$.
	
	If $\dset{\tau \in A'}{\Gamma^\tau \mid \sigma_s}$ is $g$-big above $\alpha$ then we are done. If not, then either 
	\begin{enumerate}
		\item $A_1 = \dset{\tau \in A'}{\Gamma^\tau \preceq \sigma_s}$ is $g$-big above $\alpha$ or
		\item $A_2 = \dset{\tau \in A'}{\Gamma^\tau \text{ properly extends } \sigma_s}$ is $g$-big above $\alpha$.
	\end{enumerate}
	If (1) holds then let $V$ be the set of leaves of $A$ that have an extension in $A_1$. For each $\tau \in V$, the set of strings in $A_1$ extending $\tau$ must lie entirely in one of the $\Delta_{\tau, i}$. Let $\Delta'_\tau$ denote the other member of the splitting above $\tau$. Then $\cup\dset{\Delta'_\tau}{\tau \in V}$ is $g$-big above $\alpha$ and splits with $B_s$.
	
	Next, assume (2) holds, which implies that $|\sigma_s| < v$. If $\dset{\tau \in B}{ \Gamma^\tau \mid \sigma_s}$ is $h$-big above $\beta$, then we are done. If not, then it must be the case that $D = \dset{\tau \in B}{\Gamma^\tau \succeq \sigma_s}$ is $2h$-big above $\beta$. $D$ can be partitioned into the sets $D_i = \dset{\tau \in D}{\Gamma^\tau(|\sigma_s|) = i}$, one of which must be $h$-big above $\beta$, say $D_j$. Let $B_{s+1} = D_j$ and let $\sigma_{s+1} = \sigma_s j$ and continue the construction. Since this process cannot continue indefinitely, we will obtain the required splitting via one of the other alternatives.
\end{proof}

\begin{claim} \label{clm:splitting-ratio}
	Suppose $\tau_0, ..., \tau_k$ are nodes of length $l$ in $T \setminus B$, $k < w_{h_M}(l)$ and that $h_M(l)/h_B(l) \ge r(h_M, l)$. Then there is a sequence of sets $A_0, ..., A_k$, where $A_j$ is $(h_M/2^{3+3k})$-big above $\tau_j$ and which are pairwise $\Gamma$-splitting.
\end{claim}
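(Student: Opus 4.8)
The plan is to prove this by induction on $k$, using Lemma~\ref{lem:splitting} as the inductive step to peel off one node at a time. For $k = 0$, we have a single node $\tau_0$ of length $l$ in $T \setminus B$, and by the standing assumption of this subsection there is an $h_M/16$-big splitting above $\tau_0$; since $h_M/16 = h_M/2^{3+3\cdot 0 + 1}$, we actually get something a little better than $h_M/2^{3+3\cdot 0}$-big, but in any case the two halves of this splitting witness what we need (take $A_0$ to be either half — a single set, trivially "pairwise" splitting with nothing). Actually, more carefully: for $k=0$ we just need $A_0$ to be $h_M/8$-big above $\tau_0$, and we can take $A_0$ to be one half of the $h_M/16$-big splitting, which is certainly $h_M/16$-big; but we want $h_M/8$. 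The cleaner route is to observe that Claim~\ref{clm:convergence} gives an $h_M/2$-big set of strings above $\tau_0$ on which $\Gamma$ converges to sufficiently many bits, and then thin to an $h_M/8$-big (in fact $h_M/4$-big) subset on which $\Gamma$ agrees; this is $A_0$.

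For the inductive step, suppose $A_0, \dots, A_{k-1}$ have been constructed above $\tau_0, \dots, \tau_{k-1}$, each $(h_M/2^{3+3(k-1)})$-big above its node and pairwise $\Gamma$-splitting, and we wish to incorporate $\tau_k$. We apply Lemma~\ref{lem:splitting} $k$ times — once against each $A_j$ for $j < k$ — to simultaneously shrink $A_j$ and build a set above $\tau_k$ that splits with it. To do this we need, above $\tau_k$, a source of $\Gamma$-splittings of the form required by the hypothesis of Lemma~\ref{lem:splitting}: above each leaf of the current approximation above $\tau_k$, a pair $\Delta_{\tau,0}, \Delta_{\tau,1}$ that is $4g$-big (for the appropriate $g$) and $\Gamma$-splitting. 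These come from the standing assumption that there is an $h_M/16$-big splitting above every node of $T \setminus B$, combined with Claim~\ref{clm:convergence} to first push up to a level where enough bits of $\Gamma$ have converged (so that the "$|\Gamma^\sigma| > v$" hypothesis of Lemma~\ref{lem:splitting} can be met on the $B$-side, i.e.\ the $A_j$-side). Each application of Lemma~\ref{lem:splitting} costs a factor of roughly $2^{?}$ in bushiness on each side; one must check that after $k$ applications — dividing by $4$ each time on the $\tau_k$ side, so by $4^k$, plus the initial $h_M/16$ or $h_M/2$ — the set above $\tau_k$ is still $(h_M/2^{3+3k})$-big, and likewise that each $A_j$ has only shrunk from $(h_M/2^{3+3(k-1)})$-big to $(h_M/2^{3+3k})$-big. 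The hypothesis $h_M(l)/h_B(l) \ge r(h_M,l) = 2^{3+3w_{h_M}(l)}$ and the bound $k < w_{h_M}(l)$ are exactly what guarantee all these divisions stay above the floor of $h_B$ (equivalently, that the constructed sets remain large enough relative to the bad set $B$ to be meaningful), and the choice $h_M/16 \ge h_B$ from the partial order setup keeps $B$ from interfering at each stage.

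The main obstacle I anticipate is bookkeeping the bushiness losses precisely: Lemma~\ref{lem:splitting} as stated takes a $4g$-big set and a $4h$-big set and returns a $g$-big and an $h$-big pair, so to apply it $k$ times to $\tau_k$ against each already-built $A_j$, one must set up the "$4$-fold padding" correctly at each stage and verify that the geometric decay $2^{-(3+3j)}$ for $j = 0, 1, \dots, k$ is consistent with what $k$ rounds of Lemma~\ref{lem:splitting} actually deliver — in particular that shrinking all the previously built $A_j$ simultaneously (not just the newest one) still lands everything at the uniform bound $h_M/2^{3+3k}$. A secondary point is ensuring the convergence-level $v$ in Lemma~\ref{lem:splitting} can always be exceeded on the needed side, which is handled by invoking Claim~\ref{clm:convergence} to force arbitrarily many bits of $\Gamma$ to converge $h_M/2$-bigly before each splitting application. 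Once the arithmetic is pinned down, the structure is a clean induction.
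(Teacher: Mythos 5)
Your plan is correct in substance and uses the same ingredients as the paper's proof -- induction on $k$, iterated applications of Lemma~\ref{lem:splitting}, Claim~\ref{clm:convergence} to meet the ``$|\Gamma^\sigma|>v$'' hypothesis, and the bound $h_M(l)/h_B(l)\ge r(h_M,l)$ with $k<w_{h_M}(l)$ to absorb removals of $B$ -- but you run Lemma~\ref{lem:splitting} in the mirror-image orientation. In the paper, the old sets $A_j$ carry the splitting hypothesis: each is trimmed off $B$ (to $\Pi_j$) and extended by a splitting above each leaf (to $\Pi'_j$), while the new node $\tau_{k+1}$ sits on the convergence side, where a single application of Claim~\ref{clm:convergence} yields one $q$-big set $\Delta_0$ (with $q=h_M/2^3$) converging past the maximum $v$ over \emph{all} the $\Pi'_j$ at once; $\Delta_0$ is threaded through the $k+1$ applications, losing a factor $4$ each time, and each $A_j$ is cut only once. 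You thread the new node through the splitting side instead, so fresh splittings (and a $B$-trim) are needed above its shrinking set at every round, and each $A_j$ needs its own convergence extension. The bookkeeping you left open does close: the first round needs no trim since $\tau_k\notin B$, each later round costs $4$ from the lemma and at most $2$ from trimming (valid at all levels $\ge l$ because $h_M/h_B$ is nondecreasing and $h_B(l)\le h_M(l)/2^{3+3w_{h_M}(l)}$ -- note the global bound $h_B\le h_M/16$ you cite is \emph{not} enough once you are down near $h_M/2^{3+3k}$), and each $A_j$ loses exactly the budgeted $2^3$. Also, your base case is simpler than you make it: $\{\tau_0\}$ is $g$-big above $\tau_0$ for every $g$, so no convergence step is needed there.
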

\begin{proof}
	The proof is by induction on $k$. Suppose we already have $A_0, ..., A_k$, where each $A_j$ is $(h_M/2^{3+3k})$-big above $\tau_j$ and the collection is pairwise $\Gamma$-splitting. Let $\tau_{k+1}$ be an additional node of length $l$ that is not in $B$ and let $q = h_M/2^3$. 
	
	Note that since $w_{h_M}(l) > k + 1$, $h_B(l) < q(l)/2^{3k+1}$. So we first refine each $A_j$ to a $\Pi_j$ where $\Pi_j$ is $(q/2^{3k+1})$-big above $\tau_j$ and $\Pi_j \cap B = \emptyset$. If $\rho$ is a leaf of $\Pi_j$, then it is not in $B$ and since $q/2^{3k+1} \le h_M/16$, we can find a $q/2^{3k+1}$-bushy splitting, say $D_{\rho, 0}$ and $D_{\rho, 1}$, above $\rho$. We let $\Pi'_j = \cup_{i, \rho} D_{\rho, i}$.
	
	Let $m$ be the longest length of the image of $\Gamma$ on any string in any of the $\Pi'_j$. Appealing to Claim \ref{clm:convergence}, we let $\Delta_0$ be a $q$-big set above $\tau_{k+1}$ such that each leaf of $\Delta_0$ gives at least $m+1$ bits of convergence via $\Gamma$. We now apply Lemma \ref{lem:splitting} on $\Pi'_0$ and $\Delta_0$, obtaining $A'_0 \subset \Pi'_0$ and $\Delta_1 \subset \Delta_0$, which are $\Gamma$-splitting and where the former is $q/2^{3(k+1)}$-big above $\tau_0$ and the latter is $q/4$-big above $\tau_{k+1}$. Next, we apply Lemma \ref{lem:splitting} to the pair $\Pi'_1$ and $\Delta_1$, obtaining $A'_1 \subset \Pi'_1$ and $\Delta_2 \subset \Delta_1$, which are $\Gamma$-splitting and where $A'_1$ is $q/2^{3(k+1)}$-big above $\tau_1$ and $\Delta_2$ is $q/4^2$-big above $\tau_{k+1}$. After $k+1$ applications of Lemma \ref{lem:splitting}, we will have obtained $A'_0$ through $A'_k$ and $\Delta_{k+1}$, which are pairwise $\Gamma$-splitting. Moreover, $\Delta_{k+1}$ is $q/2^{2(k+1)}$-big above $\tau_{k+1}$, so we can let $A'_{k+1} = \Delta_{k+1}$. 
\end{proof}
	
	Our argument here differs once again in a crucial way from Kumabe and Lewis's. Suppose we have defined the delayed splitting tree $S$ up to a certain level and let $\tau$ be one of the leaves of this finite tree. In order to continue the construction above $\tau$, we must find a sufficiently bushy splitting above $\tau$. In the Kumabe-Lewis argument, such a splitting will be found, or $\tau$ will be seen to enter the bad set. In either case, the construction of the tree $S$ is in no danger of ``stalling''. Here, however, we have no access to the bad set, so we may end up searching in vain for a splitting. In order to get around this, we will only ask for splittings above sufficiently bushy many leaves of the current approximation to $S$, a situation that we can guarantee, and add the remaining leaves to the bad set. Thus, we will be adding lots of strings to the bad set at each level of the construction. The following lemma is critical to preserving its smallness when we do so:
	\begin{lemma}\label{lem:smallness-preservation}
		Let $g$ be an order function. Suppose $A \subset \bstrings$ is $g$-small above $\sigma \in \bstrings$, and suppose $\tau \in \bstrings$ extends $\sigma$ and $A$ contains no extension of $\tau$. If $B$ is a set of strings extending $\tau$ that is $g$-small above $\tau$, then $A \cup B$ is $g$-small above $\sigma$.
	\end{lemma}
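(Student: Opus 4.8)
The plan is to argue by contradiction. Suppose that $A$ is $g$-small above $\sigma$, that $B$ is $g$-small above $\tau$, that every element of $B$ extends $\tau$ and $A$ contains no extension of $\tau$, and yet that $A\cup B$ is $g$-big above $\sigma$. Fix a finite $g$-bushy tree $T$ above $\sigma$ all of whose leaves lie in $A\cup B$; the goal is to extract from $T$ either a witness that $A$ is $g$-big above $\sigma$ or a witness that $B$ is $g$-big above $\tau$, either of which is a contradiction.

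First I would split on whether some leaf of $T$ belongs to $B$. If none does, then every leaf of $T$ lies in $A$, so $T$ itself witnesses that $A$ is $g$-big above $\sigma$, and we are done. So assume some leaf $\ell_0$ of $T$ lies in $B$. Since every element of $B$ extends $\tau$, we have $\tau\preceq\ell_0$, and since the trees witnessing $g$-bigness are closed under initial segments, $\tau\in T$.

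The key step is to prune $T$ at $\tau$: set $T' = \{\rho\in T : \rho\succeq\tau\}\cup\{\rho : \rho\preceq\tau\}$, i.e.\ keep the part of $T$ at or above $\tau$ together with the stem running up to $\tau$. I would then check that $T'$ is a finite $g$-bushy tree above $\tau$: the only nodes at which bushiness is required are the $\nu\in T'$ with $\nu\succeq\tau$ that are not leaves of $T'$, and any such $\nu$ is also a nonleaf of $T$ (every $T'$-successor of $\nu$ is a proper extension of $\tau$, hence lies in $T$), so $\nu$ has at least $g(|\nu|)$ immediate successors in $T$, each of which extends $\tau$ and therefore survives into $T'$. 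Next I would observe that every leaf of $T'$ is a leaf of $T$ extending $\tau$ (the nodes $\preceq\tau$ all have successors in $T'$, so are not leaves of $T'$), and that $T'$ is nontrivial since $\ell_0$ is one such leaf. Because $A$ contains no extension of $\tau$, no leaf of $T'$ lies in $A$; as every leaf lies in $A\cup B$, they all lie in $B$. Hence $T'$ witnesses that $B$ is $g$-big above $\tau$, contradicting the hypothesis.

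I expect the only real obstacle to be the bookkeeping in the pruning step: verifying that cutting $T$ down to $T'$ neither destroys the branching above $\tau$ nor leaves any leaf outside $B$. This is exactly where the two structural hypotheses about $\tau$ (that $B$ lives above $\tau$, and that $A$ avoids extensions of $\tau$) do the work. No appeal to the concatenation property or the smallness preservation property is needed — only the fact that bushy trees are closed under initial segments.
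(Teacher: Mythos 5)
Your proposal is correct and follows essentially the same route as the paper's proof: assume a $g$-bushy tree $T$ above $\sigma$ with leaves in $A\cup B$, note that some leaf must lie in $B$ (else $A$ would be $g$-big above $\sigma$), conclude $\tau\in T$, and restrict $T$ to the strings comparable with $\tau$ to obtain a $g$-bushy tree above $\tau$ with all leaves in $B$, contradicting the smallness of $B$. You simply spell out the pruning and leaf-membership verifications that the paper leaves implicit.
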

	\begin{proof}
		Suppose otherwise, i.e., there is a $g$-bushy tree $T$ above $\sigma$ with leaves in $A \cup B$. It must be the case that some leaves of $T$ are in $B$. Since every string in $B$ extends $\tau$, $\tau \in T$. This means that there is a tree $T'$ that is $g$-bushy above $\tau$ whose leaves are in $B$, namely, the tree consisting of all strings in $T$ that are comparable with $\tau$. This is a contradiction.
	\end{proof}

	Let $\pair{l_i}$ be the sequence of splitting levels for the pair $(h_M, h_B)$. We begin by defining $h_S$. Let $j_i = l_{i+1}$. For $n < j_0$, let $h_S(n) = h_M(n)$. For $j_{i+1} > n \ge j_i$, let $h_S(n) = h_M(j_i) / r(h_S, j_i)$. Then for each $i$, 
	\[ \frac{h_S(j_i)}{h_B(j_i)} = \frac{h_M(j_i)}{h_B(j_i) r(h_S, j_i)} \ge \frac{(r(h_M, j_i))^{i+1}}{r(h_S, j_i)} \ge (r(h_S, j_i))^i.\]
	Hence the pair $(h_S, h_B)$ allows splitting above $|\sigma|$.

	We now describe how we build the partial computable tree $S$. We start by letting $S_0$ be an $h_S$-bushy subtree of $T$ above $\sigma$ with leaves of length $l_1$ or less such that if $D_0$ is the set of leaves of $S_0$ of length strictly smaller than $l_1$, then $D_0$ is $h_B$-small above $\sigma$. Since the terminal nodes of $T$ are contained in $B$, such a tree must exist. We declare the nodes in $D_0$ terminal and the leaves of $S_0$ that are of length $l_1$ to be the \emph{children of $\sigma$}. Throughout the construction we will maintain the property that if $\tau \in S$ has children in $S$, then they are all of the same length and that length is a splitting level for the pair $(h_M, h_B)$.
	
	 At a stage $s$ of the construction, we will have built a finite approximation $S_s$ of $S$, and accumulated a set $D_s$ of nodes on $S_s$ that we have declared terminal. $D_s$ will always be $h_B$-small above $\sigma$.
	
	 Suppose that $\tau \in S_s$ has a set $C_\tau$ of children of length $l_i$ and that they are leaves of $S_s$. If we have not already done so, we initiate a search for a subset $C'_\tau$ of $C_\tau$ such that $C_\tau \setminus C'_\tau$ is $h_B$-small above $\tau$, and for each $\rho \in C'_\tau$, there is a $A_\rho$, $h_S$-bushy above $\rho$ such that the collection $\dset{A_\rho}{\rho \in C'_\tau}$ is pairwise $\Gamma$-splitting.
	
	If $\tau \notin B$ then this search must terminate. To see why this is the case note first that $B$ is $h_B$-small above $\tau$. Let $\rho_0, ..., \rho_k$ be the strings in $C_\tau \setminus B$. Since $l_i$ is a splitting level for $(h_M, h_B)$, $h_M(l_i)/h_B(l_i) \ge r(h_M, l_i)$. Moreover, $w_{h_M}(l_i) \ge w_{h_S}(l_i) > k$. By Claim \ref{clm:splitting-ratio}, there are $A_0, ..., A_k$, with $A_j$  $h_M/2^{3 + 3k}$-big above $\rho_j$, that are pairwise $\Gamma$-splitting. Now \[\frac{h_M(n)}{2^{3 + 3k}} \ge \frac{h_M(n)}{2^{3 + 3w_{h_S}(l_i)}} = \frac{h_M(n)}{r(h_S, l_i)} \ge h_S(n) \] for $n \ge l_i$, so we can refine the $A_j$ to subtrees that are $h_S$-bushy.
	
	If $C'_\tau$ is found, then we extend each $\rho \in C'_\tau$ by $A_\rho$. Note that by Lemma~\ref{lem:smallness-preservation}, $D_s \cup (C_\tau \setminus C'_\tau)$ is $h_B$-small above $\sigma$, since $D_s$ initially contains no extension of $\tau$ and $C_\tau \setminus C'_\tau$ is $h_B$-small above $\tau$. So we can add $C_\tau \setminus C'_\tau$ to $D_s$.
	
	 Next, for each $\rho \in C'_\tau$ we wish to extend the leaves of $A_\rho$ $h_S$-bushily to the next splitting level for $(h_M, h_B)$. Let $L_\rho$ be the set of leaves of $A_\rho$, and let $m = \max \dset{|\nu|}{\nu \in L_\rho}$. Let $l$ be least splitting level for $(h_M, h_B)$ greater equal to $m$. We begin a search for an $L'_\rho \subseteq L_\rho$ such that $L_\rho \setminus L'_\rho$ is $h_B$-small above $\rho$ and above each $\nu \in L'_\rho$ there is an $h_S$-bushy tree with leaves of length $l$. Note that if $\rho \notin B$, this search must terminate. When we find such an $L'_\rho$, we extend all its elements $h_S$-bushily to level $l$, declaring the new leaves to be the children of $\rho$ and add $L_\rho \setminus L'_\rho$ to $D_s$. The same argument as before shows that $D_s$ remains $h_B$-small above $\sigma$.
	
	The resulting tree $S$ is $h_S$-bushy and if we let $D = \cup_s D_s$, then the new bad set $D \cup B$ is $2 h_B$-small above $\sigma$. It is clear that the construction halts above a node $\tau \in S$ if it is either in $B$ or we have declared it to be terminal by adding it to $D$, and so $B \cup D$ contains all the terminal nodes of $S$. By extending $\sigma$, we can ensure that $(h_S, 2 h_B)$ allows splitting above $|\sigma|$. For such a $\sigma$, the condition $(\sigma, S, D \cup B, h_S, 2 h_B)$ extends $(\sigma, T, B, h_T, h_B)$ and forces $\Gamma^{f_\+G} \ge_T f_\+G$.

	This completes the proof of the theorem.

\subsection{A question} The $\DNC$ function yielded by Theorem~\ref{thm:minimal-X-DNC} is computably bounded, but we do not know if the construction can be carried out below a given order function. This difficulty is also present in Kumabe and Lewis's argument, and arises from the fact that applying the splitting method of Lemma~\ref{lem:splitting} repeatedly for each pair of leaves above which we need to find a splitting, as we do in Claim~\ref{clm:splitting-ratio}, is rather costly in terms of bushiness and this cost grows exponentially in the number of leaves. Thus it is not simply a matter of delaying the task of splitting until we have reached a level where the bushiness of the surrounding tree is sufficiently high: By extending the leaves of our subtree bushily, we generate exponentially \emph{more} leaves, and the bushiness required of the surrounding tree for splitting above these leaves grows at a rate that is at least doubly exponential.

Whether this difficulty can be surmounted is a question of considerable interest. If we were able to carry out the construction below the order function given by Theorem~\ref{thm:slowhdim1}, then by its relativized version, we could conclude that for every oracle $X$, there is a real of effective Hausdorff dimension 1 relative to $X$ that is of minimal degree. This would imply that the classical Hausdorff dimension of the set of reals of minimal degree, which is as yet unknown, is 1.

\begin{question}
	For every oracle $X$, and for every order function $h$, is there an $h$-bounded function that is $\DNC$ relative to $X$ and of minimal degree?
\end{question}

\section{Appendix}

\begin{theorem}[Miller]\label{thm:ZPDNRhyperimmune}
	Every function that is $\DNC$ relative to $0'$ is of hyperimmune degree.
\end{theorem}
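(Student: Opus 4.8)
The plan is to show directly that $f$ computes a function not dominated by any computable function, which is precisely the assertion that $f$ is of hyperimmune degree. If $f$ is itself not dominated by any computable function there is nothing to prove, so we may assume $f$ is bounded by a computable function $q$; this is the only case that requires an argument. Following the usual pattern I would partition $\omega$ into consecutive finite blocks $\dset{I_{\pair{i,k}}}{i,k\in\omega}$, devoting $I_{\pair{i,k}}$ to the requirement that $g(n) > \varphi_i(n)$ for some $n \in I_{\pair{i,k}}$; meeting all of these ensures that $g$ is not dominated by any total $\varphi_i$. Using the recursion theorem (with parameters) I would reserve, for each requirement, an index $e = e(i,k) \in I_{\pair{i,k}}$, and arrange that $g$ on $I_{\pair{i,k}}$ is an $f$-computable ``settling time'' whose magnitude records enough of $f$ over that block that, were $g$ bounded on the block by $\varphi_i$, the oracle $\zerojump$ could reconstruct the single value $f(e)$, forcing $\Phi_e^{\zerojump}(e) = f(e)$ --- contradicting that $f$ is $\DNC$ relative to $\zerojump$.

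Concretely, the value of $g$ on $I_{\pair{i,k}}$ would be the least stage $s$ by which a prescribed finite amount of information about $f$ has been confirmed against the stage-$s$ approximation $\zerojump_s$. Working with the finite objects $\zerojump_s$ rather than with $\zerojump$ itself is what keeps $g$ computable from $f$ alone, since evaluating $\Phi_j^{\zerojump_s}$ for $s$ steps needs no more than a computable approximation. Conversely, since $\zerojump$ can decide whether and when any given $\varphi_i$ converges, once $\varphi_i$ is known to be total and to bound $g$ on the block, $\zerojump$ can replay all of the relevant computations up to the now-known bounding stage; combining this replay with the computable bound $q$ on $f$ and with the diagonalization built into the construction, $\zerojump$ can determine, coordinate by coordinate across the block, which values $f$ can take, and in particular the value $f(e)$. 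As the procedure just described is $\zerojump$-computable with index $e$ (this is the point of the recursion theorem), we get $\Phi_e^{\zerojump}(e) = f(e)$. Totality of $g$ --- that the required confirmation always eventually happens --- is again a consequence of $f$ being $\DNC$ relative to $\zerojump$.

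The main obstacle is precisely this passage from a \emph{bound} on the block's settling time to the \emph{exact} values of $f$ on the block: one instance of the diagonalization against $\Phi_{(\cdot)}^{\zerojump}$ forbids only a single value at a single coordinate, so pinning down $f(e)$ requires spreading the diagonalization over a block whose length is chosen large relative to $q$ and to the ($\zerojump$-available) values of $\varphi_i$, and carrying out a counting argument in the spirit of the smallness-preservation and covering lemmas of Section~\ref{sec:defs-and-combinatorics}. A secondary technical issue is that some $\Phi_j^{\zerojump}(j)$ may diverge, which $f$ cannot detect in finite time; I would handle this by organizing the settling-time function around the true stages of the standard approximation to $\zerojump$, so that spurious convergences of $\Phi_j^{\zerojump_s}(j)$ at non-true stages cause no harm. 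It is worth observing that this fits the pattern flagged in the introduction: the set of strings that fail to be $\DNC$ relative to $\zerojump$ is only $\Sigma^0_2$ rather than c.e., but $\Sigma^0_2$ is exactly what $\zerojump$ can recognize, and that recognition is what powers the replay-and-recover step.
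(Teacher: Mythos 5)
Your overall scaffolding---build some $g \le_T f$, reserve an index $e$ by the recursion theorem, and argue that if $g$ failed to escape $\varphi_i$ on a block then $\zerojump$ could compute $f(e)$, contradicting $f \in \DNC^{\zerojump}$---is a reasonable shape, but the step that carries all the weight is exactly the one you leave open. Since the procedure $\Phi_e^{\zerojump}$ has no access to $f$, its output at $e$ is a single fixed value; so your construction would have to guarantee that \emph{every} $q$-bounded oracle whose associated settling-time function is bounded by $\varphi_i$ on the block takes that one prescribed value at $e$. But $g$ must be total and computable from $f$ alone, so it cannot wait for the values $\varphi_i(n)$ (which may never converge); it is therefore unclear how changing $f(e)$ among the $q(e)$ many permitted candidates could ever force $g$ above the unknown bound $\varphi_i$ on the block. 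The promised ``counting argument in the spirit of Section~\ref{sec:defs-and-combinatorics}'' is precisely the missing proof, and the smallness-preservation lemmas do not supply it: they concern bushy sets of strings, not recovery of oracle values from bounds on settling times.

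The totality issue is likewise not a secondary technicality, and the proposed fix is circular. A stage ``by which a prescribed amount of information about $f$ has been confirmed against $\zerojump_s$'' is in general either undefined (the confirming event may never occur, e.g.\ when $\Phi_j^{\zerojump}(j)$ diverges, or spurious computations keep reappearing) or not computable from $f$: the true stages of the approximation to $\zerojump$ are not $f$-computable---recognizing them essentially requires $\zerojump$---so organizing $g$ around them destroys $g \le_T f$. Nor does $\DNC^{\zerojump}$ by itself certify that an $f$-computable search terminates; it only says that $f$ avoids one value per convergent $\zerojump$-computation. This is exactly the difficulty the paper's proof is designed around: there the ``wait for $\varphi_n(m)$ to converge'' is packaged into a functional $\Psi$ defined on all of $\omega^\omega$, so $\Psi^g$ automatically majorizes $\varphi_n$ wherever the wait succeeds, and the only task of $f$ is to avoid, at each level, the at most one child above which the wait is infinite. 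Whether and where that happens is a single partial $\zerojump$-computable piece of advice per level, which is precisely what a $\DNC^{\zerojump}$ function can diagonalize against via the recursion theorem; no case split on whether $f$ is computably bounded is needed. As written, both the passage from ``the settling times are bounded by $\varphi_i$'' to ``$\Phi_e^{\zerojump}(e) = f(e)$'' and the totality of the settling-time function are unproven, and I do not see how to repair them without reverting to a mechanism of the paper's kind.
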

\begin{proof}
	The argument is a modification of the proof by D. A. Martin of the fact that almost every real is of hyperimmune degree, as presented in Downey and Hirschfeldt \cite{DowneyHirschfeldtBook}. We construct a reduction $\Psi$ on $\omega^\omega$ such that whenever $\Psi^g$ is total for a $g \in \omega^\omega$, $\Psi^g$ is not computably dominated. Moreover, any function that is $\DNC$ relative to $0'$ can compute such a $g$.

\subsection*{Construction of $\Psi$.}
	 The construction is comprised of subconstructions that operate simultaneously and independently above each string. If $\tau \in \bstrings$ is of length $n$, then the subconstruction above $\tau$ attempts to do the following for each extension $\tau i$ of $\tau$ \emph{in succession}: 
	\begin{enumerate}
		\item Reserve a number $m$ for $\tau i$ such that $\Psi^{\tau i}(m)$ has not already been defined and $m$ has not already been reserved for any initial segment or extension of $\tau i$.
		\item If $\varphi_n(m)$ converges, set $\Psi^{\tau i}(m) = \varphi_n(m) + 1$ and move on to $\tau (i + 1)$. If $\varphi_n(m)$ never converges, then we say that the subconstruction above $\tau$ \emph{halts at $\tau i$}.
	\end{enumerate} 

	In addition, at each stage $s$ of the construction, let $T_s$ be the (finite) set of strings for which $s$ has been reserved. For each $f \in \omega^\omega \setminus [T_s]^\prec$, set $\Psi^f(s) = 0$. 

\subsection*{Verification.} 
 	Note that if $g$ is such that no subconstruction halts at one of its initial segments, then $\Psi^g$ is total: If $m$ was reserved for an initial segment of $g$, then $\Psi^g(m)$ converges. Otherwise, at stage $m$ we would have set $\Psi^g(m) = 0$.
 	
 	\begin{claim}
		If $\Psi^g$ is total, then it is not computably dominated.
	\end{claim}
 	\begin{proof}
 		If $\tau \prec \tau i \prec g$ where $\tau$ is of length $n$, then the subconstruction above $\tau$ did not halt at $\tau i$. If it halted at $\tau j$ for some $j < i$, then $\varphi_n$ never converged on the number reserved for $\tau j$. If it proceeded beyond $\tau i$, then $\Psi^g(m) = \varphi_n(m) + 1$, where $m$ is the number reserved for $\tau i$. In either case, $\varphi_n$ does not dominate $\Psi^g$.
 	\end{proof}
 	
 	\begin{claim}
 		If $f$ is $\DNC$ relative to $0'$ then it computes a function $g$ such that no subconstruction halts at an initial segment of $g$, and therefore, $\Psi^g$ is total.
 	\end{claim} 
 	\begin{proof}
 		We construct $g$ by initial segments $\pair{\tau_s}_{s \in \omega}$. Given $\tau_s$, let $n$ be such that $\varphi^{0'}_n(n)$ converges to $i$ if and only if the subconstruction above $\tau_s$ halts at $\tau_s j$. Then let $\tau_{s + 1} = \tau_s f(n)$. Finally, let $g = \bigcup_s \tau_s$.
 	\end{proof}

 	This concludes the proof of the theorem.
\end{proof}
\bibliography{research}{}

\begin{thebibliography}{10}

\bibitem{DNRWWKL}
Klaus Ambos-Spies, Bj{\o}rn Kjos-Hanssen, Steffen Lempp, and Theodore~A.
  Slaman.
\newblock Comparing {DNR} and {WWKL}.
\newblock {\em J. Symbolic Logic}, 69(4):1089--1104, 2004.

\bibitem{Beros}
Achilles~A. Beros.
\newblock A {DNC} function that computes no effectively bi-immune set.
\newblock {\em Arch. Math. Logic}, 54(5-6):521--530, 2015.

\bibitem{Fireworks}
L.~{Bienvenu} and L.~{Patey}.
\newblock {Diagonally non-computable functions and fireworks}.
\newblock {\em ArXiv e-prints}, November 2014.

\bibitem{CaiThesis}
Mingzhong Cai.
\newblock {\em Elements of Classical Recursion Theory: Degree-Theoretic
  Properties and Combinatorial Properties}.
\newblock PhD thesis, Cornell University, 2011.

\bibitem{DoraisHirstShafer}
Fran\c{c}ois~G. Dorais, Jeffry~L. Hirst, and Paul Shafer.
\newblock Comparing the strength of diagonally nonrecursive functions in the
  absence of {$\Sigma_2^0$} induction.
\newblock {\em J. Symb. Log.}, 80(4):1211--1235, 2015.

\bibitem{DowneyJockusch}
Rodney~G. Downey, Noam Greenberg, Jr. Jockusch, Carl~G., and Kevin~G. Milans.
\newblock Binary subtrees with few labeled paths.
\newblock {\em Combinatorica}, 31(3):285--303, 2011.

\bibitem{DowneyHirschfeldtBook}
Rodney~G. Downey and Denis~R. Hirschfeldt.
\newblock {\em Algorithmic randomness and complexity}.
\newblock Theory and Applications of Computability. Springer, New York, 2010.

\bibitem{GiustoSimpson}
Mariagnese Giusto and Stephen~G. Simpson.
\newblock Located sets and reverse mathematics.
\newblock {\em J. Symbolic Logic}, 65(3):1451--1480, 2000.

\bibitem{MillerGreenberg}
Noam Greenberg and Joseph~S. Miller.
\newblock Diagonally non-recursive functions and effective {H}ausdorff
  dimension.
\newblock {\em Bull. Lond. Math. Soc.}, 43(4):636--654, 2011.

\bibitem{Jockusch}
Carl~G. Jockusch, Jr.
\newblock Degrees of functions with no fixed points.
\newblock In {\em Logic, methodology and philosophy of science, {VIII}
  ({M}oscow, 1987)}, volume 126 of {\em Stud. Logic Found. Math.}, pages
  191--201. North-Holland, Amsterdam, 1989.

\bibitem{JockuschLewis}
Carl~G. Jockusch, Jr. and Andrew E.~M. Lewis.
\newblock Diagonally non-computable functions and bi-immunity.
\newblock {\em J. Symbolic Logic}, 78(3):977--988, 2013.

\bibitem{Kucera}
Anton{\'{\i}}n Ku{\v{c}}era.
\newblock Measure, {$\Pi^0_1$}-classes and complete extensions of {${\rm PA}$}.
\newblock In {\em Recursion theory week ({O}berwolfach, 1984)}, volume 1141 of
  {\em Lecture Notes in Math.}, pages 245--259. Springer, Berlin, 1985.

\bibitem{Kumabe}
Masahiro Kumabe.
\newblock A fixed point free minimal degree.
\newblock Unpublished, 1996.

\bibitem{KumabeLewis}
Masahiro Kumabe and Andrew E.~M. Lewis.
\newblock A fixed-point-free minimal degree.
\newblock {\em J. Lond. Math. Soc. (2)}, 80(3):785--797, 2009.

\bibitem{Kurtz}
Stuart~Alan Kurtz.
\newblock {\em Randomness {and} {genericity} {in} {the} {degrees} {of}
  {unsolvability}}.
\newblock ProQuest LLC, Ann Arbor, MI, 1981.
\newblock Thesis (Ph.D.)--University of Illinois at Urbana-Champaign.

\bibitem{Sacks}
Gerald~E. Sacks.
\newblock Some open questions in recursion theory.
\newblock In Heinz-Dieter Ebbinghaus, Gert~H. M\"{u}ller, and Gerald~E. Sacks,
  editors, {\em Recursion Theory Week}, volume 1141 of {\em Lecture Notes in
  Mathematics}, pages 333--342. Springer Berlin Heidelberg, 1985.

\end{thebibliography}
\bibliographystyle{plain}

\end{document}